\newcommand{\Exp}{\mathbf{E}}
\newcommand{\wN}{w^{\eps,n}}
\newcommand{\vN}{v^{\eps,n}}
\newcommand{\tvN}{v^{\eps,n}}
\newcommand{\WN}{\mathcal{W}^{\eps,n}}
\newcommand{\Weps}{\mathcal{W}^{\eps}}
\renewcommand{\R}{\mathbb{R}}
\renewcommand{\N}{\mathbb{N}}
\newcommand{\T}{\mathbb{T}}
\renewcommand{\Z}{\mathbb{Z}}
\renewcommand{\E}{\mathbb{E}}
\renewcommand{\P}{\mathbb{P}}
\newcommand{\J}{\mathbb{J}}
\newcommand{\BM}{\mathbf{M}}
\newcommand{\BP}{\mathbf{P}}
\newcommand{\cA}{\mathcal{A}}
\newcommand{\cD}{\mathcal{D}}
\newcommand{\cF}{\mathcal{F}}
\newcommand{\cG}{\mathcal{G}}
\newcommand{\cH}{\mathcal{H}}
\newcommand{\cL}{\mathcal{L}}
\newcommand{\cM}{\mathcal{M}}
\newcommand{\cN}{\mathcal{N}}
\newcommand{\cO}{\mathcal{O}}
\newcommand{\cS}{\mathcal{S}}
\newcommand{\cW}{\mathcal{W}}
\newcommand{\dd}{\mathrm{d}}      %%%%%% d
\newcommand{\eff}{\mathrm{eff}}
\newcommand{\SH}{\mathscr{H}}
\newcommand{\fw}{\mathfrak{w}}
\newcommand{\gf}{f}
\newcommand{\sym}{\mathrm{sym}}
\def\one{\mathrm{(I)}}
\newcommand{\1}{\mathds{1}}
\def\restr{\mathord{\upharpoonright}}
\newcommand{\vphi}{\varphi}
\def\sym{{\mathrm{sym}}}
\newcommand{\bulk}{\mathrm{bulk}}
\newcommand{\Dbulk}{\mathrm{D}_\bulk}
\newcommand{\SHE}{\mathrm{SHE}}
\newcommand{\nueff}{\nu_{\rm eff}}
\def\one{\mathrm{(I)}}
\newcommand{\half}{\frac{1}{2}}
\def\indN#1{{	\J^\eps_{#1}	}}
\newcommand{\fock}{\Gamma L^2}	% Fock space
\def\sint{I}					% Stochastic integral
\def\wc{\SH}					% Wiener Chaos
\def\qvar#1{\langle #1 \rangle}	% Quadratic variation
\def\swn{{	\eta	}}
\def\filt{{ \CG	}}		%Filtration generated by $(\xi_t)_{t\ge 0}$.
\newcommand{\gen}{\cL^\eps}
\newcommand{\geneff}{\cL^\eff}
\newcommand{\gens}{\cL_0^{N}}
\newcommand{\gensy}{\cL_0}
\newcommand{\gena}{\cA^{\eps}}
\newcommand{\genap}{\cA^{\eps}_+}
\newcommand{\genam}{\cA^{\eps}_-}
\newcommand{\cGN}{\cG^\eps}
\newcommand{\NN}{\cN^\eps}
\newcommand{\Ll}{\mathrm{L}^\eps}
\colorlet{darkblue}{blue!90!black}
\colorlet{darkred}{red!90!black}
\colorlet{darkgreen}{green!50!black}
\colorlet{darkyellow}{yellow!90!black}
\title{Lecture notes on stationary critical and super-critical SPDEs\footnote{Notes for the lecture series ``Large-scale behavior of (super)-critical stochastic PDEs'' by F. Toninelli, CIME summer school ``Statistical Mechanics and Stochastic PDEs'', Cetraro (Italy), 2023}}
\begin{document}

\maketitle

\vspace{-2cm}

\noindent{ \bf Giuseppe Cannizzaro$^1$, Fabio Toninelli$^2$}
\newline

\noindent{\small $^1$University of Warwick, UK, %
  $^2$Technical University of Vienna, Austria\\}

\noindent{\small\email{giuseppe.cannizzaro@warwick.ac.uk, \\
fabio.toninelli@tuwien.ac.at}}
\newline

\begin{abstract}
  The goal of these lecture notes is to present recent
  results regarding the large-scale  behaviour of critical and
  super-critical non-linear stochastic PDEs, that fall outside the realm of the
  theory of Regularity Structures.  These include the two-dimensional
  Anisotropic KPZ equation, the stochastic Burgers equation in
  dimension $d\ge 2$ and the stochastic Navier-Stokes equation with
  divergence-free noise in dimension $d=2$. Rather than providing complete
  proofs, we try to emphasise the main ideas, and some crucial
  aspects of our approach: the role of the \emph{generator equation} and of the
  \emph{fluctuation-dissipation theorem} to identify the limit process; 
  \emph{Wiener chaos decomposition} with respect to the stationary measure and 
  its truncation; and the so-called
  \emph{Replacement Lemma}, which controls the \emph{weak coupling limit}
  of the equations in the critical dimension and identifies the limiting diffusivity.  For pedagogical
  reasons, we will focus exclusively on the stochastic Burgers
  equation.  The notes are based on works in collaboration with Dirk
  Erhard and Massimiliano Gubinelli. 
% 
%the super-critical case $d\ge 3$, we prove that the diffusively rescaled 
%
%
%obtain a scaling limit, in the form of a linear SPDE with non-trivially renormalized coefficients, when the regularization cut-off tends to zero. We obtain the analogous result in the critical dimension $d=2$ in the weak coupling regime, where the interaction strength tends suitably to zero together with the cut-off.
%{\color{red} }
\end{abstract}

\bigskip\noindent
{\it Key words and phrases.}
Stochastic Partial Differential Equations, critical and super-critical dimension, KPZ and Burgers equation, weak coupling scaling, 
diffusion coefficient,  asymmetric simple exclusion 
\bigskip

\setcounter{tocdepth}{2} 
\tableofcontents

\section{Introduction}

The study of (singular) Stochastic Partial Differential Equations
(SPDEs) has known tremendous advances in recent years. Thanks to the
groundbreaking theory of regularity structures~\cite{Hai},
paracontrolled calculus~\cite{Para} or renormalization group
techniques~\cite{Kup}, a local solution theory can be shown to hold
for a large family of SPDEs in the so-called {\it sub-critical}
dimensions (see~\cite{Hai} for the notion of {\it sub-criticality} in
this context).  In contrast, SPDEs in the {\it critical} and {\it
  super-critical} dimensions
%i.e. those at which the previous theories break down, 
are, from a mathematical viewpoint, much less explored and still
poorly understood.

In a series of works in collaboration
with Dirk Erhard and Massimiliano Gubinelli (in
various combinations), we rigorously determined the large-scale
(Gaussian) fluctuations for a class of SPDEs at and above criticality
under (suitable) diffusive scaling. As detailed below, the class of
equations to which our approach applies include the two-dimensional
Anisotropic KPZ equation \cite{W91,CES}, the stochastic Burgers equation in dimension
$d\ge 2$ \cite{BKS85} and the stochastic Navier-Stokes equation with
divergence-free noise in dimension $d=2$. Moreover, the methods we 
established have proven versatile enough to study the large-scale limit
of critical, singular SDEs, such as the diffusion in the curl of the
2-d GFF \cite{TothValko,CHT}, as well as the self-repelling Brownian polymer
\cite{TothValko,CG} (works \cite{CHT,CG} in collaboration with Levi Haunschmid-Sibitz and 
Harry Giles, respectively).  The aim of these lecture notes is to give
an account of these recent developments focusing on the main ideas, heuristics 
and novel technical tools rather than to provide the complete the proofs, for which we 
address the interested reader to the corresponding papers. 

Before delving into the details, let us informally introduce the type of S(P)DEs which 
can be analysed thanks to the methods presented in these notes and 
provide an overview of the main results we obtained. From Section \ref{sec:SBE} on, we
will focus on a specific example, the Stochastic Burgers Equation 
(SBE) in dimension $d\ge2$.

 \subsection{The equations}
 
 \subsubsection*{The stochastic Burgers Equation}

The stochastic Burgers equation (SBE) is a SPDE which, in any spatial dimension $d\geq 1$, 
can (formally) be written as
\begin{equ}[e:BKS]
    \partial_t\eta = \tfrac{1}{2}\Delta\eta+\fw\cdot\nabla\eta^2 + \div \boldsymbol{\xi}
 \end{equ}
where the scalar field $\eta=\eta(x,t)$ depends on $t\ge 0$ and $x\in \R^d$ with $d\ge1$, 
$\fw\in \R^d$ is a fixed vector and $\boldsymbol{\xi}=(\xi_1,\dots,\xi_d)$ is a $d$-dimensional space-time white noise on 
$\R_+\times \R^d$, 
i.e. a centred $d$-dimensional Gaussian process whose covariance is formally given by 
$\Exp[\xi_i(t,x)\xi_j(s,y)]=\delta(t-s)\delta(x-y)\1_{i=j}$. 

The SBE in~\eqref{e:BKS} was introduced by van Beijeren, Kutner and Spohn in
\cite{BKS85} as an approximation of the fluctuations of general driven
diffusion systems with one conserved quantity, e.g. the Asymmetric
Simple Exclusion Process (ASEP) on $\Z^d$.  The nonlinear term 
expresses the fact that the velocity at which the
fluctuation density travels depends on its magnitude. In the context
of ASEP, this comes from the gradient of the current and is due to its
asymmetry, so that in particular the vector $\fw$ models how the ASEP
under consideration is asymmetric in the different spatial directions.
%In view of this derivation, it is not surprising that 
In the {\it sub-critical} dimension $d=1$,~\eqref{e:BKS} reduces to
the (spatial derivative of the) celebrated Kardar--Parisi--Zhang (KPZ)
equation~\cite{Kardar} whose local solution theory is by now
well-understood~\cite{KPZ,KPZreloaded,GonJara,GPuni} and whose
large-scale statistics have been recently determined~\cite{QSkpz,Vir}.
In higher dimensions the only available results prior to our recent
work \cite{CGT} were in the discrete setting. For $d\geq 3$,
corresponding to the super-critical case, ASEP has been shown to be
diffusive at large scales~\cite{EMY,LY} -- its fluctuations are
Gaussian and given by the solution of a Stochastic Heat equation (SHE)
with additive noise, see~\cite{CLO,LOV} and~\cite{Komorowski2012} for
a review.  The critical dimension $d=2$ is more subtle. Only
qualitative information is available and no scaling limit had been
obtained before \cite{CGT}, either in the discrete or continuous
setting. The most important result in this respect is that of~\cite{Yau}, which states
that the two-dimensional ASEP is logarithmically superdiffusive,
meaning that the bulk diffusion coefficient  $\Dbulk(t)$ (see Section \ref{sec:ris1} below) satisfies
$\Dbulk(t)\sim (\log t)^{2/3}$ for $t$ large.\footnote{The diffusivity
  measures how the correlations of a process grow in space as a
  function of time so that, denoting by $\ell(t)$ the correlation
  length, it formally satisfies $\ell(t)\sim \sqrt{ t \Dbulk(t)}$. }

\subsubsection*{The Anisotropic KPZ equation}

The so-called Anisotropic two-dimensional KPZ equation (or just AKPZ for short) is formally written as
\begin{equ}
  \label{eq:AKPZ}
  \partial_t h=\frac12\Delta h+\lambda \nabla h\cdot Q \nabla h+\xi
\end{equ}
where $h=h(x,t)$ is a scalar field, $x\in\mathbb R^2$, $\lambda\in\mathbb R$ is a coupling parameter, 
$Q=Q_{\rm{AKPZ}}$ is the $2\times2$ symmetric diagonal matrix with diagonal entries $1$ and $-1$ 
(so that the nonlinearity is $(\partial_{1} h)^2-(\partial_{2}h)^2$), 
and $\xi$ is a space-time white noise, $\mathbf E[\xi(t,x)\xi(s,y)]=\delta(t-s)\delta(x-y)$. 
Note that, in contrast to \eqref{e:BKS}, the noise is not conservative i.e., it is not in divergence form. 

Introduced in the work of Wolf~\cite{W91} for generic $Q$,~\eqref{eq:AKPZ} 
was originally derived as a description of the fluctuations of general 
$(2+1)$-dimensional (with ``$2$'' the spatial and ``$1$''  the time dimension)
stochastic growth: the Laplacian is a smoothing term that overall
flattens the interface, the noise models the microscopic local
randomness, while the non-linear term encodes
the slope-dependence of the growth mechanism. Indeed, at a heuristic
level, the connection between a specific (microscopic) growth model
and~\eqref{eq:AKPZ} is that $Q$ is proportional to the Hessian $D^2 v$ of the
average speed of growth $v$ of the microscopic model, seen as a function of the average interface slope.

The (spatial) dimension $d=2$ is critical, in that~\eqref{eq:AKPZ} is
formally scale invariant under diffusive scaling, and finer features
of the equation, and in particular the nature of the slope dependence
determined by the matrix $Q$, are expected to influence its
properties.  Via non-rigorous one-loop Renormalisation Group
computations,~\cite{W91} conjectured that its large scale behaviour
will depend on the sign of $\det Q$ - in the Isotropic case,
corresponding to $\det Q>0$, $\Dbulk(t)\sim t^\beta$ for some
universal $\beta>0$ (known only numerically), while in the Anisotropic
case, corresponding to $\det Q\leq 0$, $\Dbulk(t)\sim t^\beta$ for
$\beta=0$.  There are by now several mathematically rigorous results
which support Wolf's conjecture regarding the different nature of
isotropic and anisotropic cases, but it is only for the specific
choice of $Q=Q_{\rm{AKPZ}}$ as above that the asymptotic behaviour of
$\Dbulk(t)$ has been characterised: in our work~\cite{CET20}, it is
shown that $\Dbulk(t)\sim (\log t)^{1/2}$ for $t$ large.

Let us mention that there are several
microscopic $(2+1)$-dimensional growth models that are known to
belong to the AKPZ universality class, in the sense that their speed
of growth satisfies $\det(D^2 v)\le0$. These include the
Gates-Westcott model \cite{MR1478060,Ler}, certain two-dimensional
arrays of interlaced particle systems \cite{Borodin2014} and the
domino shuffling algorithm \cite{CT} just to mention a few (other
growth processes like the 6-vertex dynamics of
\cite{borodin2017irreversible} and the q-Whittaker particle system
\cite{BC14} should belong to this class, but an explicit computation of $v$ is
not possible since their stationary measures are non-determinantal;
see also \cite{MR3966870} for further references). 
Even though several results have been obtained in the discrete setting, the 
large-scale diffusivity (or super-diffusivity) properties of these
models is currently unknown and we believe it would be extremely interesting to study.

\subsubsection*{The stochastic Navier--Stokes equation}

The incompressible stochastic Navier--Stokes equation (SNS) of interest for us is 
\begin{equ}[e:ns]
	\partial_t v = \tfrac12 \Delta v - \lambda v \cdot \nabla v + \nabla p -  f\,,\qquad\nabla\cdot v=0\,,
\end{equ}
where the velocity field $v=v(t,x)\in\R^2$, $t\geq 0$, $x\in \R^d$, is subject to the incompressibility condition (second equation in 
the display above), i.e. it is divergence-free, 
$\lambda\in\R$ is the {\it coupling constant} which tunes the strength of the nonlinearity,
$p$ is the pressure which ensures that dynamics preserves the incompressibility condition, 
$f=\nabla^\perp \boldsymbol{\xi}$ for 
$\nabla^\perp  \eqdef (\partial_2 ,- \partial_1  )$ and $\boldsymbol{\xi}$ a two-dimensional space-time white noise. 

For generic $f$, the SNS equation describes the motion of 
an incompressible fluid subject to an external random forcing and 
it has been studied under a variety of assumptions on $f$. Most of the literature focuses 
on the case of trace-class noise, for which existence, uniqueness of solutions and ergodicity 
were proved (see e.g.~\cite{FG, DaD, HM, FR, RZZ} and references therein). 
The case of even rougher noises, e.g. space-time white noise and its derivatives, which is relevant 
in the description of motion of turbulent fluids~\cite{mikuleviciusStochasticNavierStokes2004}, 
was first considered in $d=2$ in~\cite{DaD2}, 
and later in dimension three~\cite{ZZ} (see also~\cite{HZZ2} for a global existence result in this latter case). 

The interest in our choice of noise is that the dynamics formally leaves the {\it energy measure}, i.e. the law of 
a vector-valued incompressible white noise,  
invariant and the equation is, once again, formally scale invariant under diffusive scaling. In particular, 
this is precisely the point at which the work~\cite{GT} breaks (they consider a fractional version of~\eqref{e:ns} 
in which the Laplacian is replaced by $- (-\Delta)^\theta$ and the noise by $(-\Delta)^{\frac{\theta-1}{2}} \boldsymbol{\xi}$, 
and their well-posedness result holds for $\theta>1$).

\subsubsection*{Beyond SPDEs: (self-)interacting diffusions}

If we move away from the SPDE setting, there are interesting models of (self-)interacting diffusions 
which can be analysed in our framework. 
The first example is the so-called Diffusion in the Curl of the 2d-GFF (DCGFF) \cite{TothValko}, 
that is, the SDE
\begin{equ}
  \label{eq:DCGFF}
  \dd X_t=\omega(X_t)\dd t+\dd B_t, \quad X_0=0
\end{equ}
where $X=(X^{(1)},X^{(2)})$ is a two-dimensional process, $B$ is a standard Brownian motion on $\mathbb R^2$  
and $\omega(\cdot)=(\omega_1(\cdot),\omega_2(\cdot))$ is a quenched, ergodic random vector field, 
given by the curl of the two-dimensional Gaussian Free Field, i.e. $\omega =\nabla^\perp\xi$ 
with $\xi$ the centered Gaussian field with covariance given by the Green's function of the 2-dimensional Laplacian. 
The DCGFF is a toy model for the motion of a tracer particle in a two-dimensional turbulent, incompressible fluid, 
a role played by $\omega$ that indeed satisfies $\div\omega=0$. 

Yet another example is the so-called Self-repelling Brownian polymer (SRBP) \cite{APP83,TothValko}, described
by the two-dimensional SDE
\begin{equ}
  \label{eq:SRBP}
  \dd X_t=-\nabla L_t(X_t)\dd t+\dd B_t,
\end{equ}
where $L_t$ is the occupation measure of $(X_s)_{s\le t}$ up to time $t$. What~\eqref{eq:SRBP} 
says, is that the tracer is repelled by its previous trajectory, in that the drift is \emph{minus} 
the gradient of the occupation measure (hence the name ``self-repelling Brownian polymer''). 
Note that the SRBP has long memory and is not Markovian.
\medskip

As discussed below, both models are conjectured (and in the first instance proved) to be logarithmically superdiffusive, 
i.e. the mean-square displacement is expected to grow as $t\sqrt{\log t}$ for $t$ large. 
For further references on DCGFF and SRBP, see the introductions of~\cite{CHT,CG}. 

\subsection{Common features and stationary measures}
\label{sec:StatM}

The equations discussed in the previous sections are very different in nature and 
are used to describe a variety of unrelated phenomena. Yet, our techniques are applicable in each of these cases. 
It is therefore natural to ask what are the features they share and which among those ensure that our approach 
is viable. 

\subsubsection*{Singularity and (super-)criticality}

First of all, they are all ill-posed ({\it singular}): for the SPDEs, the noise is merely a distribution 
and it is too irregular for the nonlinearity to be analytically defined (it is not allowed to square a generalised function). 
Similarly, for the SDEs, the drift is too singular as, not only it is unclear how to evaluate it on a (Brownian path), but 
its spatial regularity is even below the threshold identified in~\cite{CC,DD}\footnote{To witness, for the DCGFF, the $2d$ GFF is in $\CC^{\alpha}$, 
$\alpha<0$, the latter being the
space of H\"older distributions with regularity $\alpha$ (see~\cite{CC} for the definition), so that
$\omega\in\CC^{\alpha-1}$. In the aforementioned works, the threshold regularity is $-2/3$ so that~\eqref{eq:DCGFF} falls indeed out of their scope.}. 
That said, what is expected to be universal about the equations above is not their solutions but 
its large-scale behaviour. As a consequence, we are lead to study a regularised (or microscopic) version of them. 
In the case of the SDEs, this is very natural and amounts to convolve the drift with a smooth compactly supported function. 
For the SPDEs instead, the regularisation can be done various ways 
and we choose to smoothen the non-linearity so that the 
solution is still rough but the dynamics is well-posed. (An alternative option would be to smoothen the
noise via a convolution with a smooth kernel, so that the resulting solution would also be smooth.)
For instance, for the Burgers equation, we replace the formal expression $\eta^2$ in \eqref{e:BKS} by
\begin{equ}
  \label{Pi1}
\Pi_1 [(\Pi_1 \eta)^2],
\end{equ}
where $\Pi_a$ denotes the projection over the Fourier modes with $|k|\le a$ (the choice of the constant $a=1$ is arbitrary). Similarly, for AKPZ one replaces the non-linearity by
\begin{equ}
  \label{Pi2}
  \lambda \Pi_1 [\nabla \Pi_1h\cdot Q_{{\rm AKPZ}} \nabla \Pi_1 h].
\end{equ}
Let us remark that there is nothing special about the Fourier projection $\Pi_1$: we could have replaced  
{\it every} instance of it by the convolution in space with any sufficiently regular radial mollifier. 
What is technically important though is that the regularisation preserves the symmetries of the stationary 
measure (see next item).

We are now in possession of a microscopic well-defined object, and our goal is to move to large scales. 
As will be apparent in Section~\ref{sec:SBE}, when we rescale the equation diffusively, i.e. 
$(x,t)\to(x/\eps,t/\eps^2)$, the cut-off parameter is 
automatically removed in the limit $\eps\to0$. 
Furthermore, the reader can readily check that, in dimension $d=2$, 
such rescaling has {\it no effect} on the coefficients of the equations above 
(for the DCGFF, the key observation is that the GFF is itself scale invariant; 
for the SRBP, see \cite[Eq. (1.4)]{CG}), which means that the equations are {\it critical}. 
The SBE in dimension $d>2$ is instead {\it super-critical} since, under diffusive rescaling, the coefficient 
of the nonlinearity gets multiplied by $\eps^{d/2-1}$, which vanishes as $\eps\to 0$ (see~\eqref{e:scaling}-\eqref{eq:lambdaeps} below).

\subsubsection*{Stationarity}

All the stochastic processes described in the previous
section have a \emph{Gaussian stationary measure} (with the regularisation we choose, the measure is 
actually independent of the value of the cut-off parameter) and, in 
fact, all our results consider the \emph{stationary process}. This
will allow to use tools from Wiener chaos decomposition of Gaussian 
Hilbert spaces. In particular, the stationary measure of the SBE is 
the space white noise, for AKPZ it is the Gaussian Free Field on the 
plane and for SNS, the curl of the Gaussian Free Field. 
As for the SDEs \eqref{eq:DCGFF} and \eqref{eq:SRBP}, we cannot expect the solution 
itself to be stationary. Instead, we need to look at {\it the environment seen from the particle} 
(which in turn solves a SPDE of transport type) and the latter  
admits a Gaussian stationary measure. 

One way to guess whether or not the SPDE at hand has a Gaussian 
stationary measure (or to consider a noise for which this is the case) 
is to see if there exists a Hilbert space $H$ such that the 
non-linearity tested against the solution is $0$. $H$ would then be the Cameron-Martin 
space associated to the Gaussian measure of interest.
In the case of the SBE for example, this amounts to verify (see~\eqref{e:InvMeas}) that for any $a>0$ we have  
\begin{equ}
\langle \fw\cdot \Pi_{a}[\nabla(\Pi_{a}\eta)^2], \eta\rangle_{L^2}=0\,. 
\end{equ}
This suggests that the invariant measure for the process is a space white noise since $H=L^2$ 
is the Cameron-Martin space associated to it. 
\medskip

Let us emphasise that, while the law of all these processes at any fixed time is Gaussian, the law of 
the processes in space-time \emph{is not Gaussian}. Another important 
remark is that, if the cut-off function did not respect the 
symmetries of the stationary measure, we would lose stationarity and 
it is not clear at present how to adjust our techniques to still recover the same results.

\subsubsection*{Nature of the nonlinearity}

At the more technical level, another common feature of these stochastic processes is that they have
a quadratic non-linearity. While this is obvious for AKPZ, SBE and
SNS, for the two self-interacting diffusions introduced above, this
can be seen by looking at the SPDE solved by them (see e.g.~\cite[Eq. (1.12)]{CG}). 
Even though we do not expect such aspect to be essential for our approach, 
it is not obvious what happens when considering more general non-linearities. 
That said, in the critical dimension progress has been made~\cite{CKpol}.

 \subsection{The results: an overview}
 
In this section, we give an informal overview of the results, following a logical rather than chronological order.
From that point on, we will focus exclusively on the scaling limit results of Sections \ref{sec:ris2} and \ref{sec:ris3}, 
in the case of the SBE.

 \subsubsection{Super-diffusivity in the critical dimension}
\label{sec:ris1}

 A way to quantify the (super)-diffusivity of an interacting particle
 system, or, in the continuum, of a SPDE like \eqref{e:BKS} or
 \eqref{eq:AKPZ}, is via the so-called \emph{bulk diffusion
   coefficient} $\Dbulk$ (or bulk diffusion matrix, for 
 models that are not rotationally invariant). 
 For its notion in the framework of interacting lattice gases such
 as the ASEP, we refer to \cite[Ch. II.2]{spohn2012large} but we only mention 
 that it can be defined either as the mean square
 displacement of a second-class particle, or as the space-time
 integral of the correlations of the current associated to the
 conserved quantity of the process (for ASEP, this is the particle
 density). The analog in the continuum, for SBE, formally reads 
 \begin{equ}[e:Dbulk1]
 \Dbulk(t)=\frac{1}{2t}\int_{\R^2}|x|^2\Exp[\eta(t,x)\eta(0,0)]\dd x
 \end{equ}
where $\Exp$ denotes expectation with respect to the space-time law of the stationary process $\eta$, 
while for AKPZ it would be the same but with $(-\Delta)^\half h$ in place of $\eta$. 
Clearly, the expression in~\eqref{e:Dbulk1} is purely formal as both $\eta$ and $(-\Delta)^\half h$ are distributions 
and therefore cannot be evaluated at points. That said, as~\cite[App. A]{CET20} heuristically shows, there is an intimate 
relation between~\eqref{e:Dbulk1} and the growth of the non-linearity, which 
is expressed in terms of a Green-Kubo formula. To be precise, the notion 
of bulk diffusion coefficient we work with is 
\begin{equ}
   \label{e:DbulkAKPZ}
   \Dbulk(t)\eqdef1+\frac{2\lambda^2}t\int_0^t \dd s\int_0^s \dd r \int \dd x\,\mathbf E\left[
\mathcal N(r,x) \mathcal N(0,0)
     \right],
 \end{equ}
where $\mathcal N$ the (Fourier regularized version of the) non-linearity 
$(\partial_{x_1}h)^2-(\partial_{x_2}h)^2$, cf. \eqref{eq:AKPZ}. Note that thanks to the regularisation 
in~\eqref{Pi2}, the above quantity is well-defined. 
In the case of Burgers instead, we set (see~\cite{DGH})
 \begin{equs}
   \label{e:DbulkSBE}
   \Dbulk(t)&\eqdef \\
   1+&\frac{2|\fw|^2}t\int_0^t \dd s\int_0^s \dd r \int \dd x\mathbf E\left[
:\Pi_1[(\Pi_1\eta)^2]:(r,x):\Pi_1[(\Pi_1\eta)^2]:(0,0)
     \right],
 \end{equs}
where  $\Pi_1((\Pi_1\eta)^2)$ denotes the Fourier regularisation of $\eta^2$ (see~\eqref{Pi1}) and 
this time, since the square in the non-linearity is not centred, we replaced the usual product 
with the Wick one, i.e. $:\Pi_1[(\Pi_1\eta)^2]:\eqdef \Pi_1[(\Pi_1\eta)^2]-\mathbb E(\Pi_1[(\Pi_1\eta)^2])$ 
with $\E$ the expectation with respect to the invariant measure.  
\medskip

The bulk diffusion coefficient is particularly useful as it provides qualitative information regarding 
the behaviour of our process. Indeed, by~\eqref{e:DbulkAKPZ} and~\eqref{e:DbulkSBE}, 
it is immediate to see that for the linear equations obtained by setting 
$\fw$ and $\lambda$ to $0$ in~\eqref{e:BKS} and~\eqref{eq:AKPZ} 
respectively, $\Dbulk(t)\equiv1$ for all $t\geq 0$. In the following result, 
we quantify the corrections to the diffusivity and show that this is the case \emph{for neither   AKPZ nor SBE}, 
thus proving that the 
non-linearity is {\it (marginally) relevant} at large scales. 

 \begin{theorem}\cite{CET20}
   \label{th:AKPZ}
   For the stationary AKPZ equation \eqref{eq:AKPZ} (with non-linearity regularized in Fourier space), the Laplace transform 
   \[\mathcal D_\bulk(\mu)\eqdef\int_0^\infty
   e^{-\mu t}\, t  \Dbulk(t) \dd t\] of the bulk diffusion coefficient in~\eqref{e:DbulkAKPZ} satisfies the bounds
   \begin{equ}
     \label{e:Db}
c_-(\delta)(\log |\log \mu|)^{-5-\delta}   \le \frac{ \mu^2\,\mathcal D_\bulk(\mu)}{\sqrt{|\log \mu|} }\le c_+(\delta)(\log |\log \mu|)^{5+\delta}
   \end{equ}
   as $\mu\to0$,
   for any $\delta>0$, where $c_\pm(\delta)$ are finite positive constants.
 \end{theorem}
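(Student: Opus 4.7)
The plan is to rewrite $\mathcal D_\bulk(\mu)$ as a resolvent matrix element. Integrating the double time integral in \eqref{e:DbulkAKPZ} against $e^{-\mu t}t\,\dd t$ and using stationarity one obtains a representation of the form
\begin{equation*}
\mu^2\mathcal D_\bulk(\mu)=\mu+2\lambda^2\,\langle\mathcal N,(\mu-\mathcal L)^{-1}\mathcal N\rangle_{L^2(\P)},
\end{equation*}
where $\P$ is the invariant Gaussian measure (the Fourier cut-off GFF) of the regularised AKPZ dynamics, $\mathcal L$ is its generator, and $\mathcal N$ is the regularised non-linearity in~\eqref{Pi2}. The theorem thus reduces to sharp two-sided bounds on $\langle\mathcal N,(\mu-\mathcal L)^{-1}\mathcal N\rangle$ as $\mu\downarrow 0$.

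Next, split $\mathcal L=\mathcal L_0+\lambda\mathcal A$, with $\mathcal L_0$ the Ornstein--Uhlenbeck part (self-adjoint and non-positive on $L^2(\P)$) and $\mathcal A$ the first-order transport associated with the non-linearity, which is antisymmetric with respect to $\P$ because of the invariance established in Section~\ref{sec:StatM}. The Dirichlet-form / Schur-complement variational identity then gives
\begin{equation*}
\langle\mathcal N,(\mu-\mathcal L)^{-1}\mathcal N\rangle=\sup_{\phi}\Big\{2\langle\phi,\mathcal N\rangle-\langle\phi,(\mu-\mathcal L_0)\phi\rangle-\lambda^2\langle\mathcal A\phi,(\mu-\mathcal L_0)^{-1}\mathcal A\phi\rangle\Big\},
\end{equation*}
so that the problem becomes that of bounding from above and below an \emph{effective} symmetric operator obtained by absorbing the antisymmetric part into $\mathcal L_0$.

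The key structural tool is the Wiener chaos decomposition $L^2(\P)=\bigoplus_{n\ge 0}\mathcal H_n$. Since the AKPZ non-linearity is quadratic in $h$, one has $\mathcal A=\mathcal A_++\mathcal A_-$ with $\mathcal A_\pm:\mathcal H_n\to\mathcal H_{n\pm 1}$, and $\mathcal N\in\mathcal H_2$. This means the variational formula above can be iterated chaos by chaos: at each level one replaces $\mathcal L_0$ by an \emph{effective generator} of the form $\mathcal L_0-\lambda^2\mathcal A_+^{*}(\mu-\mathcal L_0^{(n+1)}-\cdots)^{-1}\mathcal A_+$, producing a Schur-complement (continued-fraction) recursion in the chaos index. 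The lower bound in \eqref{e:Db} is obtained by inserting into the variational principle an explicit test function supported on the first $N$ chaoses and solving the truncated recursion; the upper bound is obtained by showing that truncating the recursion at level $N$ induces only a controlled error on the quadratic form.

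The quantitative content is then a concrete Fourier calculation: in the critical dimension $d=2$ each iteration step produces a logarithmically divergent integral of size $\log(1/\mu)$, reflecting the marginal relevance of the quadratic non-linearity, and optimising the truncation depth $N\sim\log|\log\mu|$ balances the per-level gain against the accumulated error, yielding the leading $\sqrt{|\log\mu|}$ behaviour. The main obstacle is the \emph{upper} bound: the lower bound is relatively explicit once the continued-fraction ansatz is in place, but for the upper bound one has to show that, at each level of the recursion, the full off-diagonal operator $\mathcal A$ may be replaced by its diagonal, "effective-diffusivity" counterpart with a quantitative error. This is precisely the role of the \emph{Replacement Lemma} alluded to in the introduction, and the accumulation of such errors across the $N\sim\log|\log\mu|$ iterations is the source of the sub-leading $(\log|\log\mu|)^{5+\delta}$ corrections in~\eqref{e:Db}.
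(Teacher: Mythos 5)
Your overall strategy---Green--Kubo/resolvent identity, decomposition of the generator into a symmetric Ornstein--Uhlenbeck part and an antisymmetric transport part, the Schur-complement variational formula, iteration in the Wiener chaos index, and optimisation of the truncation depth $N\sim\log|\log\mu|$---is indeed the structure of the argument in~\cite{CET20}, which follows the iterative scheme introduced by Landim--Quastel--Salmhofer--Yau and by Yau for the $2$-dimensional ASEP. The lecture notes do not reproduce that proof (the theorem is stated with a citation only), so the comparison has to be against~\cite{CET20} itself, and your sketch captures its skeleton.

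There are, however, two inaccuracies worth flagging. First, a small computational slip: Laplace-transforming $t\,\Dbulk(t)$, the constant $1$ in~\eqref{e:DbulkAKPZ} contributes $1/\mu^2$, so the identity should read $\mu^2\,\mathcal D_\bulk(\mu)=1+2\lambda^2\langle\mathcal N,(\mu-\mathcal L)^{-1}\mathcal N\rangle$, not $\mu+\cdots$. Second, and more substantively, you attribute the control of the upper bound to ``the Replacement Lemma alluded to in the introduction.'' That is a misattribution: the Replacement Lemma (Lemma~\ref{l:Replacement} of these notes, from~\cite{CGT}) is a tool tailored to the \emph{weak-coupling} regime $\lambda_\eps=|\log\eps|^{-1/2}$---its error bound carries an explicit prefactor $\lambda_\eps^2$ that vanishes with $\eps$, and its role is to identify the exact limiting diffusion matrix $D_\SHE$ in Theorems~\ref{thm:main} and~\ref{thm:AKPZweak}. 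In Theorem~\ref{th:AKPZ} the coupling $\lambda$ is \emph{fixed} and there is no such vanishing factor; the upper bound in~\cite{CET20} is obtained by different (cruder, two-sided) operator estimates at each level of the recursion, in the spirit of~\cite{Yau}, not by the Replacement Lemma. The two mechanisms are cousins (both amount to controlling off-diagonal chaos interactions), but conflating them obscures what is genuinely new in~\cite{CGT} and~\cite{CETWeak} relative to~\cite{CET20}, and also why the superdiffusivity result only yields the weaker $(\log|\log\mu|)^{\pm(5+\delta)}$ two-sided bounds rather than an identification of a sharp constant.
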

 
 Translated into real time, Theorem~\ref{th:AKPZ} morally says that $\Dbulk(t)$ is of order $\sqrt{\log t}$ for $t\to\infty$, 
 up to possible sub leading multiplicative corrections that are polynomial in $\log\log t$. We believe such 
 corrections to be spurious. 
 It is interesting that such behaviour had not been anticipated in the Mathematics
 literature (where it was implicitly believed that AKPZ would be diffusive at large scales), and no predictions in this sense had been made in the Physics literature, either.
\medskip

 The result analogous to Theorem \ref{th:AKPZ} was proven shortly afterwards in \cite{CHT} 
 (and later refined in~\cite{Otto}) for the DCGFF. In this case, 
 the diffusion coefficient $\Dbulk(t)$ is defined simply in terms of the mean square displacement of the particle
 \[\Dbulk(t)\eqdef\frac{\mathbf E[|X_t|^2]}t\,,\]
with $\mathbf E$ the joint expectation with respect the randomness of the 
Brownian noise and of the random vector field $\omega$. 
The behaviour of $\mathcal D(\mu)$ turns out to be still of order $\mu^{-2}\sqrt{|\log \mu|}$ for $\mu$ small. This had been conjectured in \cite{TothValko}, where only the bounds $\log|\log\mu|\le \mu^2 \mathcal D(\mu)\le |\log \mu|$ 
were obtained. 
\medskip 

For the SBE in dimension $d=2$, $\Dbulk(t)$
was predicted by van Beijeren, Kutner and Spohn \cite{BKS85} to grow
logarithmically, but this time with exponent $2/3$ instead of $1/2$,
i.e. $\Dbulk(t)\approx (\log t)^{2/3}$. This was proven in \cite{Yau}
by H.-T. Yau in the discrete setting for the two-dimensional ASEP,
while in the continuum this is the main result of the very
recent work \cite{DGH}, which we now state. 

\begin{theorem}\label{thm:SBEDbulk}\cite{DGH} 
For the stationary stochastic Burgers equation in dimension $d=2$, one has
   \begin{equ}
     \label{e:Db2}
c_-(\delta)(\log \log |\log \mu|)^{-3-\delta}   \le \frac{ \mu^2\,\mathcal D_\bulk(\mu)}{|\log \mu|^{2/3} }\le c_+(\delta)(\log \log |\log\mu|)^{3+\delta}
   \end{equ}
   as $\mu\to0$.
\end{theorem}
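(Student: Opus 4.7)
The plan is to recast $\mathcal D_\bulk(\mu)$ as a resolvent inner product on the Gaussian Hilbert space $L^2$ with respect to the white-noise invariant measure, and then to estimate this resolvent by an iterative scheme across Wiener chaoses that parallels the proof of Theorem~\ref{th:AKPZ}, the key difference being that the recursive exponent produces $|\log\mu|^{2/3}$ instead of $|\log\mu|^{1/2}$. First, by Fubini and stationarity one rewrites
\begin{equ}
\mu^2 \mathcal D_\bulk(\mu) = 1 + 2|\fw|^2\int_0^\infty e^{-\mu r}\int_{\R^2}\mathbf E[\mathcal N(r,x)\mathcal N(0,0)]\,\dd x\,\dd r,
\end{equ}
where $\mathcal N={:}\Pi_1[(\Pi_1\eta)^2]{:}$ lies in the second Wiener chaos $\cH_2$. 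Recognising the space-time integral as a resolvent pairing, this equals $1+2|\fw|^2\langle\mathcal N,(\mu-\gen)^{-1}\mathcal N\rangle_{L^2}$, with $\gen$ the generator of the stationary regularised Burgers dynamics restricted to the translation-invariant sector.

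Second, decompose $L^2=\bigoplus_{n\ge 0}\cH_n$ into Wiener chaoses. The generator splits as $\gen=\gensy+\genap+\genam$, where $\gensy$ is the symmetric Ornstein--Uhlenbeck part (preserving every $\cH_n$ and acting as $\tfrac12\Delta$ on each $n$-particle momentum sector), while $\genap:\cH_n\to\cH_{n+1}$ and $\genam:\cH_n\to\cH_{n-1}$ encode the conservative quadratic nonlinearity and are mutual adjoints. Because $\mathcal N\in\cH_2$, standard block-matrix manipulations yield a continued-fraction representation
\begin{equ}
\langle\mathcal N,(\mu-\gen)^{-1}\mathcal N\rangle = \langle\mathcal N,(\mu-\gensy-\genam H_3(\mu)\genap)^{-1}\mathcal N\rangle,
\end{equ}
with $H_n(\mu)=(\mu-\gensy-\genam H_{n+1}(\mu)\genap)^{-1}$ defined recursively on $\cH_n$. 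Truncating at level $N$ by replacing $H_{N+1}$ with either $0$ or $(\mu-\gensy)^{-1}$ yields, by monotonicity of the resolvent under the antisymmetric perturbation, two-sided bounds on the pairing.

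Third, and this is the technical heart of the argument, one estimates the recursion in Fourier variables. Each $H_n(\mu)$ is approximated by a scalar Fourier multiplier of the form $(\mu+\tfrac12\sum_{j\le n}|k_j|^2+\Sigma_n(\mu,k))^{-1}$, and the divergence form of the Burgers nonlinearity (the outer gradient in $\fw\cdot\nabla\eta^2$) inserts a momentum factor $\fw\cdot k$ at each vertex. Integrating out the loop momentum at level $n$ in $d=2$ then produces the diffusive recursion $\Sigma_n(\mu)-\Sigma_{n-1}(\mu)\asymp (\Sigma_n(\mu))^{-1/2}$ at leading logarithmic order; viewed as an ODE in $n$, this gives $\Sigma_n(\mu)\sim n^{2/3}$. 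Balancing the depth of the hierarchy against the available logarithmic phase-space $n_*(\mu)\asymp |\log\mu|$ then yields $\Sigma_1(\mu)\sim |\log\mu|^{2/3}$, which is the exponent appearing in \eqref{e:Db2} (and contrasts with $\Sigma\sim\Sigma^{-1}$, hence $|\log\mu|^{1/2}$, for AKPZ).

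Finally, the matching bounds in \eqref{e:Db2} follow by (a) realising the upper bound through the truncation $H_{N+1}\to (\mu-\gensy)^{-1}$ together with a Schwarz inequality applied at each continued-fraction step, and (b) realising the lower bound by testing $(\mu-\gen)^{-1}\mathcal N$ against carefully chosen Gaussian cylinder functions supported on the first $n_*(\mu)$ chaoses that nearly saturate the continued fraction at each level. The main obstacle, and the source of the iterated-logarithm corrections $(\log\log|\log\mu|)^{\pm(3+\delta)}$, is the propagation of uniform-in-$n$ bounds on the commutators between $\genap,\genam$ and $H_{n+1}(\mu)$ and the control of the non-scalar Fourier components of each $H_n$; the losses at each level of the recursion accumulate additively in $n$, and they must be absorbed into the ODE heuristic without spoiling the leading $n^{2/3}$ growth.
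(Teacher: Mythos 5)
The paper does not actually prove Theorem~\ref{thm:SBEDbulk}: it is quoted from the reference \cite{DGH}, so there is no ``paper's own proof'' to compare against line by line. Your high-level plan --- recast the Green--Kubo formula as $\mu^2\mathcal D_\bulk(\mu)=1+2|\fw|^2\langle\mathcal N,(\mu-\gen)^{-1}\mathcal N\rangle$ in the zero-total-momentum sector, expand $(\mu-\gen)^{-1}$ as a Schur/continued-fraction over Wiener chaoses, use the monotone alternation of the truncated $H_n$ to get two-sided bounds, and approximate each $H_n$ by a scalar Fourier multiplier --- is indeed the scheme that underlies both \cite{CET20} and \cite{DGH}, and your recursion with $\dot\Sigma\sim\Sigma^{-1/2}$ yielding $\Sigma\sim(\cdot)^{2/3}$ matches exactly the ODE that the paper derives for the Replacement-Lemma function $G$ (see the integral equation preceding \eqref{e:ODE}; note the square root, which the displayed ODE in \eqref{e:ODE} omits as a typo).

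There is, however, a real conceptual slip in the third paragraph. You phrase the recursion as an ``ODE in $n$'' with the hierarchy depth $n_*(\mu)\asymp|\log\mu|$, i.e.\ you picture $|\log\mu|^{2/3}$ as arising from additively accumulating $\sim|\log\mu|$ increments of size $\Sigma^{-1/2}$ as $n$ decreases from $n_*$ to $2$. That is not the mechanism. The running variable of the ODE is the \emph{scale} (the paper's $\Ll$), which ranges over $[0,|\log\mu|]$ already \emph{within a single chaos-level step}: for $d=2$, closing one loop produces a full logarithmic phase-space integral $\int_0^{|\log\mu|}\dd y/\sqrt{1+|\fw|^2\Sigma(y)}$, and the anisotropic multiplier $\Sigma(y)\sim y^{2/3}$ is the \emph{fixed point} of this one-step map, self-consistently evaluated at all scales. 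The continued-fraction recursion in $n$ converges to this fixed point with exponent error halving at each step ($\alpha_{n-1}=1-\alpha_n/2\to2/3$), so the effective depth needed is only $O(\log\log|\log\mu|)$, not $|\log\mu|$. This is exactly why the correction exponents in \eqref{e:Db2} are powers of $\log\log|\log\mu|$ --- and why they are \emph{not} powers of $\log|\log\mu|$, as a naive ``$|\log\mu|$ steps each contributing a $1+O(\cdot)$ error'' picture would predict. Your outline happens to land on the correct exponent $2/3$ because the arithmetic of ``$(\text{depth})^{2/3}$'' coincides with ``$G$ evaluated at scale $|\log\mu|$,'' but the two stories give very different error structures, and only the fixed-point/scale-variable picture is compatible with the stated triply-iterated-log corrections.
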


\subsubsection{Scaling limits: the critical dimension}

The logarithmic divergence (as $t\to\infty$) of the diffusion
coefficient for AKPZ (resp. SBE) in dimension $d=2$ implies
that, if we rescale diffusively the process, by setting
$h^\eps(x,t)=h(x/\eps,t/\eps^2)$ (resp. 
$\eta^\eps(x,t)=\eps^{-1}\eta(x/\eps,t/\eps^2)$), then the diffusion
coefficient of the rescaled equation will diverge like
$\sqrt{|\log\eps|}$ (resp. $|\log \eps|^{2/3}$). In other
words, there is no hope for the rescaled equation to have a meaningful limit as $\eps\to0$.
Let us stress that $h^\eps$ and $\eta^\eps$ simply solve the same AKPZ or SBE equations as above, 
with the ``only'' difference that the Fourier cut-off parameter is not $1$ but $1/\eps$ (i.e., in \eqref{Pi1} and \eqref{Pi2}, 
$\Pi_1$ is replaced by $\Pi_{1/\eps}$). 

In order to obtain meaningful scaling limits, two ways are in principle possible. 
One possibility is to rescale space-time differently: according to the above theorems, the scaling should be 
as $(x,t)\to(x/\eps,t/(\eps^2 |\log \eps|^\gamma)$ with $\gamma=1/2$ for AKPZ and $\gamma=2/3$ for SBE. 
It would be then extremely interesting (and probably very hard) to determine the non-trivial scaling limit of the process.
The second way is to insist on the diffusive scaling $(x,t)\to(x/\eps,t/\eps^2)$ and \emph{tame} the growth 
caused by the non-linearity, by suitably scaling it to zero as $\eps\to0$.  
The correct rescaling turns out to be, for both equations, of order $1/\sqrt{|\log \eps|}$.
   \label{sec:ris2}
   \begin{theorem}\cite{CETWeak}
     \label{thm:AKPZweak}
     Let $h^\eps$ denote the stationary solution of the AKPZ equation in the weak coupling scaling limit, i.e.
     \begin{eqnarray}
       \label{eq:AKPZweak}
       \partial_t h^\eps=\frac12\Delta h^\eps+ \frac{\hat\lambda}{\sqrt{|\log\eps|}} \Pi_{1/\eps} [\nabla (\Pi_{1/\eps}h^\eps)\cdot Q_{{\rm AKPZ}} \nabla (\Pi_{1/\eps} h^\eps)]
+\xi.
     \end{eqnarray}
     Here, $h^\eps=h^\eps(x,t)$ with $x$ belonging to the two-dimensional unit torus $\mathbb T^2$ and the initial condition is a mean-zero Gaussian Free Field  on $\mathbb T^2$. 
     Then, for each $T>0$, as $\eps\to0$  the process $h^\eps$ converges in distribution in 
     $C([0,T],\mathcal S'(\mathbb T^2)$ to the solution $h$ of
     \begin{eqnarray}
       \label{eq:AKPZlim}
       \partial_t h=\frac{\nueff(\hat\lambda)}2\Delta h+\sqrt{\nueff(\hat\lambda)}\xi, \quad \nueff(\hat\lambda)=\sqrt{\frac2\pi\hat\lambda^2+1}.
     \end{eqnarray}
   \end{theorem}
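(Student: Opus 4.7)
The plan is to characterise the limit via the martingale problem for the stationary process and to analyse the generator $\gen=\gensy+\frac{\hat\lambda}{\sqrt{|\log\eps|}}\gena$ of the rescaled AKPZ dynamics, where $\gensy$ is the Ornstein--Uhlenbeck generator associated with the linear stochastic heat equation (whose invariant measure is the GFF on $\T^2$) and $\gena$ encodes the regularised non-linearity. Since the law at each time is the GFF, we can expand every square-integrable functional of $h^\eps$ along the Wiener chaos decomposition of $L^2(\mathrm{GFF})$. The operator $\gena$ splits as $\genap+\genam$, shifting chaos degree by $\pm 1$. The goal is to show that, in the weak coupling limit, the ``second-order effect'' generated by $\gena$ renormalises the Laplacian by the factor $\nueff(\hat\lambda)$, while higher chaos contributions vanish.

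Step one is tightness of $h^\eps$ in $C([0,T],\SS'(\T^2))$. For each smooth test function $\varphi$ on $\T^2$, one uses the Dynkin decomposition
\begin{equ}
\langle h^\eps(t),\varphi\rangle=\langle h^\eps(0),\varphi\rangle+\int_0^t \gen \langle h^\eps(s),\varphi\rangle\,\dd s+\martf_t(\varphi),
\end{equ}
together with stationarity and explicit quadratic variation of $\martf(\varphi)$ to obtain uniform moment estimates and equicontinuity via Kolmogorov. The Gaussian invariant measure immediately controls tightness of the time marginals in $\SS'(\T^2)$.

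Step two, the identification of the limit, is the conceptual core. The strategy is the \emph{generator equation} approach combined with the fluctuation-dissipation mechanism. Given a suitable cylinder test functional $\Phi$, solve $(\mu-\gen)H^\eps_\mu=\Phi$ by writing $H^\eps_\mu=\sum_n H^\eps_{\mu,n}$ with $H^\eps_{\mu,n}$ in the $n$-th chaos. The recursion reads
\begin{equ}
(\mu-\gensy) H^\eps_{\mu,n}=\tfrac{\hat\lambda}{\sqrt{|\log\eps|}}\bigl(\genap H^\eps_{\mu,n-1}+\genam H^\eps_{\mu,n+1}\bigr).
\end{equ}
The key analytic input is the \emph{Replacement Lemma}: the bilinear form
\begin{equ}
\tfrac{\hat\lambda^2}{|\log\eps|}\,\genam(\mu-\gensy)^{-1}\genap
\end{equ}
has, in the weak coupling scaling $\eps\to0$, the same effect on chaos-$n$ functionals as a multiplication operator that effectively multiplies $-\frac{1}{2}\Delta$ by the factor $\nueff(\hat\lambda)$; equivalently, its action on the first chaos matches that of $(\nueff-1)\gensy/2$. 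The precise constant $2/\pi$ in $\nueff=\sqrt{2\hat\lambda^2/\pi+1}$ emerges here as a Gaussian integral over $\R^2$ of a symbol of the form $|k|^2/(2\mu+|k|^2)(2\mu+|k|^2)$ against $Q_{\rm AKPZ}$ quadratic forms, which in two dimensions produces exactly the logarithmic divergence that is cancelled by the $1/|\log\eps|$ prefactor.

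Step three is the chaos truncation: one bounds the tail $\sum_{n\ge N}\|H^\eps_{\mu,n}\|$ uniformly in $\eps$ by exploiting a spectral-gap-like bound $\langle \genap F_n,(\mu-\gensy)^{-1}\genap F_n\rangle\le C_n\langle F_n,\gensy F_n\rangle$ with constants $C_n$ growing only polynomially in $n$, thanks to the anisotropy of $Q_{\rm AKPZ}$ (the $\det Q\le0$ structure produces cancellations that are absent in the isotropic case, and this is exactly what prevents $\nueff$ from blowing up). Combining the Replacement Lemma on the leading chaoses with the truncation on the tail, $\gen H^\eps_\mu$ is shown to be close, in an $L^2$ sense with respect to the GFF, to $\geneff H^\eps_\mu-\Phi$ with $\geneff=\frac{\nueff}{2}\Delta$ plus the correspondingly renormalised noise. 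A standard convergence-of-martingale-problems argument then identifies the limit as in \eqref{eq:AKPZlim}.

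The principal obstacle is the Replacement Lemma, i.e. the precise control of the second-chaos contribution and the induction on the chaos level that justifies replacing the one-step resolvent $(\mu-\gensy)^{-1}$ by its mean-field (diagonal in Fourier) proxy. Without this replacement, each application of $\genap$ produces a logarithmic factor which, multiplied by $(\hat\lambda/\sqrt{|\log\eps|})^2$, is $O(1)$ only in a delicate averaged sense; a naive bound would give an inconsequential limit or a diverging one. The anisotropy of $Q_{\rm AKPZ}$ is what ensures summability over chaoses and yields the explicit closed-form value of $\nueff$ rather than the self-consistent equation that arises in related isotropic problems.
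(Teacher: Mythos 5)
Your high-level architecture (tightness via stationarity and It\^o-type estimates, martingale problem, generator/resolvent equation, graded sector condition, chaos truncation) is the right one and matches the paper's approach for the SBE analogue, Theorem~\ref{thm:main}. The gap is in your formulation of the Replacement Lemma, and it is not cosmetic: as stated, your version would produce the \emph{wrong} effective diffusivity.

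You write that $\tfrac{\hat\lambda^2}{|\log\eps|}\,\genam(\mu-\gensy)^{-1}\genap$, with the \emph{bare} resolvent $(\mu-\gensy)^{-1}$, is equivalent in the limit to multiplication by $(\nueff-1)\gensy$, with $\nueff=\sqrt{1+\tfrac{2}{\pi}\hat\lambda^2}$, the constant $2/\pi$ coming out of a single logarithmically divergent integral that cancels $1/|\log\eps|$. That one-loop computation gives a correction \emph{linear} in $\hat\lambda^2$, i.e. $\nueff=1+\tfrac2\pi\hat\lambda^2$, not the square root. The square root cannot come from a single application of $\genam(\mu-\gensy)^{-1}\genap$; it is a resummation effect. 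The correct structure, explained in Section~\ref{sec:d2} for the SBE case, is a fixed point: solving the chaos recursion from the top level downwards produces iterated operators $\cH^\eps_{j+1}=-\genam(-\gensy-\cH^\eps_j)^{-1}\genap$, and the Replacement Lemma (Lemma~\ref{l:Replacement} in the SBE case) identifies an explicit \emph{approximate fixed point} $\cH^\eps\approx -\genam(-\gensy-\cH^\eps)^{-1}\genap$ of the form $G(\Ll(-\gensy))$ acting as a Fourier multiplier, with $\Ll$ as in~\eqref{e:L}. Crucially, the resolvent in the bilinear form must carry the effective (renormalised) diffusivity, not the bare $\gensy$. Imposing self-consistency turns the Riemann-sum computation you sketch into an ODE for $G$: for AKPZ this reads $\dot G(t)=\tfrac{\hat\lambda^2/\pi}{1+G(t)}$, $G(0)=0$, whose solution is $G(t)=\sqrt{1+\tfrac{2}{\pi}\hat\lambda^2 t}-1$, whence $\nueff=1+G(1)=\sqrt{1+\tfrac2\pi\hat\lambda^2}$. (The SBE analogue is the ODE~\eqref{e:ODE}, which produces the $2/3$ power in~\eqref{eq:DSHEd2}.) Without this self-consistent replacement you also lose the corresponding structure for the observables $\wN$: Definition~\ref{def:ReplEq} builds them with $(-\gensy-\gensy^\fw\cGN)^{-1}$ in the resolvent precisely so that the error controlled by the Replacement Lemma is $O(n^2\lambda_\eps^2)$ uniformly in the chaos.

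Two secondary remarks. First, the graded sector condition and the polynomial-in-$n$ bounds on the chaos tail are, as in Lemma~\ref{l:GenaBound}, not a consequence of the anisotropy $\det Q_{\rm AKPZ}\le 0$; the proof via Cauchy--Schwarz and the bound~\eqref{e:CrucialBound2} uses only the $d=2$ scaling and the form of the cutoff. What the anisotropy influences is the sign/structure of the correction (Wolf's dichotomy) and the fact that one can get an explicit rotationally invariant $\nueff$, not the summability of the chaos expansion. Second, your proposal does not spell out how the renormalised noise $\sqrt{\nueff}\,\xi$ (rather than $\xi$) is recovered; in the paper's scheme this arises from the martingale $M^\eps(\WN)$ associated to the corrector, whose quadratic variation supplies exactly the missing piece, and it must be tracked alongside the drift term --- merely identifying the renormalised Laplacian is not sufficient to pin down the limit martingale problem.
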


   The fact that the limit equation is (up to a non-trivial
   renormalization, from $1$ to $\nueff(\hat\lambda)$, of the
   diffusion coefficient) the same as the linear equation obtained by
   setting $\hat\lambda=0$ in \eqref{eq:AKPZweak} is due to the fact that AKPZ is invariant under
   rotations. In more general cases, the Laplacian
   and the covariance structure of the noise in the scaling limit might 
   not be rotationally invariant and, therefore, genuinely different from those of
   the starting linear equation. This is the case for instance for
   the two-dimensional Burgers equation: in absence of non-linearity
   ($\fw=0$) the equation is rotationally invariant, but not so for
   $\fw\ne0$. In fact, the limit process obtained in the weak coupling
   limit does feel the anisotropy: see Theorem \ref{thm:main} below.
   
   Further, note that Theorem \ref{thm:AKPZweak} is compatible with Theorem \ref{th:AKPZ}. In fact, formally setting $\hat\lambda=\sqrt{|\log \eps|}$ in the expression for $\nu(\hat \lambda)$, which corresponds to choosing the coupling constant of order $1$, one would get a diffusion coefficient of order $\sqrt{|\log\eps|}$. 
 
   Let us also mention that, very recently, the first author, together with Harry Giles, has proven  \cite{CG} 
   a  weak coupling scaling limit 
   for the SRBP. The limiting process is a Brownian motion, with non-trivially renormalised diffusion constant.

   \subsubsection{Scaling limits: the supercritical case}
\label{sec:ris3}
   
Super-critical SPDEs are characterised by the fact that, under
scaling, the non-linearity is irrelevant at large scales. For
instance, rescaling the stochastic Burgers equation diffusively, the
non-linearity is multiplied by $\eps^{d/2-1}$, which formally vanishes
in the large-scale limit. Therefore, it is expected that the
large-scale limit of the equation is diffusive, without having to
scale to zero ``by hand'' the non-linearity, as is done in dimension
$d=2$ in the weak-coupling limit. In the context of interacting
particle systems and of diffusions in divergence-free random
environment, Gaussian scaling limits in dimension $d\ge3$ have been
proven in a series of works, see e.g.
\cite{CLO,LOV,horvath2012diffusive}, see also \cite{Komorowski2012}
for further references. On the other hand, analogous results were not
known in the SPDE setting. Our work \cite{CGT} fills in part this gap,
by proving that the SBE in dimension $d\ge3$ converges on large
scales, under diffusive rescaling, to the stochastic heat equation with additive noise, with non-trivially renormalized Laplacian and noise. See Theorem \ref{thm:main} below for a precise statement. In particular, the limit Laplacian is not rotationally invariant, since it contains a term $(\fw\cdot\nabla)^2$ that preserves a trace of the non-linearity of the SBE. In other words, while formally the non-linearity vanishes proportionally to $\eps^{d/2-1}$, it survives in the limit $\eps\to0$, in the form of a finite correction to the linear part of the equation.

\subsection{Some related works}

Let us mention that the study of large-scale Gaussian fluctuations of space-time random fields 
with local, non-linear, random dynamics is  not restricted the context of SPDEs but
is a classical topic in probability theory and mathematical physics.
From the more general point of view of theoretical physics, large-scale Gaussian fluctuations are expected 
for equilibrium diffusive systems above their critical dimension 
and this belief is informally explained via (non-rigorous) Renormalization Group (RG) arguments 
and effective dynamical field theories. 

On the other hand, establishing large-scale Gaussian fluctuations 
for {\it dynamical} problems has proven to be challenging  
and we are not aware of general results in this direction.
A lot of attention has been recently devoted to the (Isotropic) KPZ equation in  dimension $d\ge2$, with non-linearity $|\nabla h|^2$, corresponding to the matrix $Q$ discussed after \eqref{eq:AKPZ} being the $d\times d$ identity. The specific choice $Q=I$ plays a crucial role. Indeed, in this case one can use the Cole--Hopf 
transformation to linearise the SPDE and turn it into a {\it linear} multiplicative 
stochastic heat equation or the associated directed-polymer model. 
The latter model possesses an ``explicit" representation (via, e.g., Feynman-Kac formula) 
and one can then leverage Malliavin calculus tools to derive the scaling limit 
(see~\cite{dunlapRandomHeatEquation2021, guEdwardsWilkinsonLimit2018} for 
the multiplicative stochastic heat equation and~\cite{Gu2018b, CCM1, LZ, CNN} for KPZ). 
In this context, let us  mention the recent striking results on the fluctuations of KPZ at the critical dimension $d=2$ 
in the weak coupling scaling (or the intermediate disorder regime) starting with \cite{CD,CSZ,Gu2020} (for KPZ)
and culminating with the characterisation of the stochastic heat flow~\cite{caravennaCritical2dStochastic2023}. Let us mention also the recent \cite{DGSHE}, that studies the weak coupling limit of a non-linear multiplicative stochastic heat equation.
An alternative approach is that used in~\cite{magnenScalingLimitKPZ2018}, 
which is based on re-expressing the SPDE as a functional integral via the Martin--Siggia--Rose formalism 
(essentially a Girsanov transformation, which again is only possible in view of the specific form of the equation) 
and then leveraging constructive quantum field theory techniques and RG ideas to control 
the large scale fluctuations in the regime of small non-linearity.

% One of the main advantages of the approach explained in these notes is that 
% it moves away from the usual technique of Cole--Hopf transformation, which is {\it not applicable}, 
% towards a set of tools which on the one hand makes a link with interacting particle systems, 
% i.e. via the martingale problem and the associated infinite-dimensional generator, and on the other 
% has better chances to apply more generally. A crucial tool for us is the fact that we have control 
% over the invariant measure for~\eqref{e:BKS}, which significantly simplifies the already involved 
% analysis of the generator. %While this is another non-generic situation, there is a wide class of examples 
% %in which a similar control is available and to which our tools potentially directly apply, 
% %e.g. (super-)critical Stochastic Navier--Stokes~
% diffusions in divergence-free random vector fields~\cite{CHT}, lattice gas models~\cite{LRY} and many others. 

\subsection*{Organization of the lecture notes}

In Section \ref{sec:SBE} we will
precisely formulate the stochastic Burgers equation, state the main result from
\cite{CGT} (Gaussian large-scale limit in dimension $d\ge3$ and in
dimension $d=2$ in the weak coupling scaling) and outline the ideas
behind our arguments. Section \ref{sec:maintools} introduces the main general tools we need: Wiener-chaos analysis, the It\^o trick, apriori estimates. Section \ref{sec:approach} explains the high-level structure of the proof of the scaling limits (tightness and identification of the limit), and reduces the main statement to the proof of the so-called Fluctuation-Dissipation Theorem (FDT). Finally, Section \ref{sec:char} gives the main ideas behind the proof of the FDT: the so-called \emph{Replacement Lemma} for $d=2$ and the path expansion of the resolvent for $d\ge3$. Some open problems are discussed in Section \ref{sec:openproblems}.

\section{The equation and main result}

\label{sec:SBE}
As already mentioned, we will focus on the $d\ge2$-dimensional Burgers equation from now on.
As written, the SPDE~\eqref{e:BKS} is meaningless as the noise is too irregular for the non-linearity to be well-defined. 
Nonetheless, as we are interested in the large-scale behaviour, 
we regularise it, so to have a well-defined field at the microscopic level, and 
our goal then becomes to control its fluctuations while zooming out at the correct scale. 
We choose the regularisation in such a way to retain a fundamental property of the solution, 
namely its (formal) invariant measure. 
This amounts to smoothening the quadratic term via a Fourier cut-off as follows
\begin{equ}[eq:toroN]
    \partial_t\eta =\frac{1}{2}\Delta\eta+ \fw\cdot\Pi_1\nabla(\Pi_1\eta)^2  + (-\Delta)^{\frac{1}{2}}\xi,
\end{equ}
where for $a>0$, $\Pi_a$ acts in Fourier space as
\begin{equ}[eq:FourPi]
\widehat{\Pi_a\eta}(k)=\widehat{\eta}(k)1_{|k|\leq a}\,.
\end{equ}
Without loss of generality, 
we also replaced $\div\boldsymbol{\xi}$ in~\eqref{e:BKS} with $(-\Delta)^{\frac{1}{2}}\xi$ for $\xi$ a space-time white noise 
(see~\eqref{e:fLapla} for the definition of the fractional Laplacian), since the two can be easily seen to have the same law. 
In order to avoid technicalities related to infinite volume issues, 
we restrict~\eqref{eq:toroN} to 
the $d$-dimensional torus $\mathbb T^d_\eps$ of side-length $2\pi /\eps$. Here, $\eps$ is 
the ``scale'' parameter which will later be sent to $0$. 

\begin{remark}\label{rem:pignolerie}
In principle, the results below can be shown to hold using the same
techniques of the present paper, even if instead of the Fourier
cut-off we used a more general mollifier. This would correspond to
taking the nonlinearity in~\eqref{eq:toroN} as
$\fw\cdot\varrho\ast\nabla(\varrho\ast\eta)^2$, for $\varrho$ (smooth)
radially symmetric function decaying at infinity sufficiently fast,
which means replacing the indicator function in~\eqref{eq:FourPi} with
$\hat\varrho$.  For definiteness, we stick to $\Pi_1$.  In \cite{CGT},
to simplify some (minor but annoying) technical points, it was further
assumed that
  \begin{itemize}[noitemsep]
  \item for $d\ge3$, $1/\eps\in \N+\half$ and that the norm in \eqref{eq:FourPi} is the sup-norm $|\cdot|_\infty$, 
  \item for $d=2$, the norm  in \eqref{eq:FourPi}  is the Euclidean norm $|\cdot|$ (no further assumption on $\eps$ is made).
  \end{itemize} In the present informal lecture notes, these details will play no role.
\end{remark}

Note that via the diffusive rescaling
\begin{equ}[e:scaling]
    \eta^\eps(x,t)\eqdef \eps^{-\frac d2}\eta\left(x/\eps,t/\eps^{2}\right)\,, 
\end{equ}
equation~\eqref{eq:toroN} becomes
\begin{equ}\label{e:BurgersScaled}
    \partial_t\eta^\eps = \frac{1}{2}\Delta\eta^\eps +\lambda_\eps \fw\cdot\Pi_{1/\eps}\nabla(\Pi_{1/\eps}\eta^\eps)^2  + (-\Delta)^{\frac{1}{2}}\xi
\end{equ}
where now the spatial variable takes values in the $d$-dimensional torus $\T^d\eqdef \T^d_1$ of side-length $2\pi$, 
and $\xi$ is a space-time white noise on $\R^+\times \mathbb T^d$. 
The coefficient $\lambda_\eps$ depends on the dimension 
and is defined as
\begin{equ}[eq:lambdaeps]
  \lambda_\eps\eqdef
  \begin{cases}
   \frac1{\sqrt{\log \eps^{-2}}} &\text{if $d=2$}\\
    \eps^{\frac{d}{2}-1} & \text{if $d\ge3$}.
  \end{cases}
\end{equ}
While in the super-critical case $d\geq 3$, $\lambda_\eps$ is directly determined by~\eqref{e:scaling}, 
for $d=2$,~\eqref{e:BKS} is formally scale invariant under the diffusive scaling  
(another reason why such dimension is critical) 
so that one would have $\lambda_\eps=1$. The choice made in~\eqref{eq:lambdaeps} 
corresponds to the so-called {\it weak coupling} scaling alluded to earlier and we will comment more on it 
after we state the main result. Before doing so, let us give the notion of solution for~\eqref{e:BurgersScaled} 
we will be working with. 

\begin{definition}\label{def:StatSol}
We say that $\eta^\eps$ is a stationary solution to~\eqref{e:BurgersScaled} with coupling constant $\lambda_\eps$ if 
$\eta^\eps$ solves~\eqref{e:BurgersScaled} and its initial condition is $\eta(0,\cdot)\eqdef\mu$, for $\mu$ 
a spatial white noise on $\T^d$, 
i.e. a mean zero Gaussian distribution on $\T^d$ such that 
\begin{equ}[e:CovS]
\E[\mu(\phi)\mu(\psi)]=\langle\phi,\psi\rangle_{L^2}\,,\qquad \text{for all real  $\phi,\,\psi\in L^2_0(\T^d)$}
\end{equ}
where $\langle\cdot,\cdot\rangle_{L^2}$ is the usual scalar product in $L^2(\T^d)$  and 
$L^2_0(\T^d)$ is the space of zero-average square integrable functions on $\T^d$. 
We will denote by $\P$ the law of $\mu$ and by $\E$ the corresponding expectation.
\end{definition}

The reason why the solution $\eta^\eps$ of~\eqref{e:BurgersScaled} started from a spatial white noise 
is called {\it stationary} is that $\P$ is its invariant measure 
and, as we will prove in Lemma \ref{lem:generalproperties} below,  this holds irrespective of the choice of $\fw,\eps$. 
Instead, the law $\mathbf P$ (with corresponding expectation $\mathbf E$) of a stationary 
solution to~\eqref{e:BurgersScaled} clearly depends on $\fw,\eps$. 

For $T>0$, let us denote by $C([0,T],\cS'(\T^d))$ the space of continuous 
functions with values in the space of distributions $\cS'(\T^d)$ and by $\cF$ the Fourier transform 
on $\cS'(\T^d)$ (see~\eqref{e:FT} for a definition). 
We are now ready to state the main result of the paper. 

\begin{theorem}\label{thm:main}
Let $\fw\in\R^d\setminus\{0\}$ and $T>0$. 
For $\eps>0$, let $\lambda_\eps$ be defined according to \eqref{eq:lambdaeps} and 
$\eta^\eps$ be the stationary solution of \eqref{e:BurgersScaled} started from a spatial white noise $\mu$. 
Then, there exists a constant $D_\SHE>0$, depending only on the dimension $d$ and the norm $|\fw|$ of $\fw$, 
such that, as $\eps\to0$, $\eta^\eps$ converges in law in $C([0,T],\cS'(\T^d))$ to the unique stationary solution $\eta$ of
\begin{equ}[e:SHEintro]
\partial_t \eta =\tfrac12(\Delta + D_\SHE(\fw\cdot\nabla)^2)\eta + (-\Delta - D_\SHE(\fw\cdot\nabla)^2)^{\half}\xi\,,\qquad \eta(0,\cdot)=\mu\,,
\end{equ}
where $(\fw\cdot\nabla)^2$ is the linear operator defined as $\cF((\fw\cdot\nabla)^2\phi)(k)\eqdef -(\fw\cdot k)^2\cF(\phi)(k)$, for $\phi\in \cS'(\T^d)$ and $k\in\Z^d$. 

In the case $d=2$, $D_\SHE$ is explicit and given by the formula
\begin{equ}
  \label{eq:DSHEd2}
  D_\SHE= \frac1{|\fw|^2}\left[\left(\frac{3|\fw|^2}{2\pi} +1\right)^{\frac{2}{3}}-1\right],
\end{equ}
$|\fw|$ being the Euclidean norm of $\fw$.
\end{theorem}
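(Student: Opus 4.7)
The plan is to follow the martingale-problem strategy: prove tightness of $\{\eta^\eps\}_{\eps>0}$ in $C([0,T],\cS'(\T^d))$ and identify every subsequential limit as the unique stationary solution of~\eqref{e:SHEintro}. For a smooth zero-mean test function $\varphi$, It\^o's formula applied to~\eqref{e:BurgersScaled} produces the decomposition
\begin{equation*}
\langle\eta^\eps(t),\varphi\rangle=\langle\mu,\varphi\rangle+\tfrac12\int_0^t\langle\eta^\eps(s),\Delta\varphi\rangle\,\dd s+\int_0^t\lambda_\eps\cN^\eps_\varphi(s)\,\dd s+M^\eps_\varphi(t),
\end{equation*}
where $\cN^\eps_\varphi(s)\eqdef-\langle:(\Pi_{1/\eps}\eta^\eps(s))^2:,\Pi_{1/\eps}(\fw\cdot\nabla\varphi)\rangle$ (the Wick square can be inserted because $\fw\cdot\nabla\varphi$ has zero spatial mean) and $M^\eps_\varphi$ is the noise martingale, with explicit quadratic variation. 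The whole task is to show that in the limit $\eps\to0$ the nonlinear drift produces an extra linear term $\tfrac{D_\SHE}{2}\langle\eta^\eps,(\fw\cdot\nabla)^2\varphi\rangle$ in the drift and an extra contribution $D_\SHE\int_0^t\|\fw\cdot\nabla\varphi\|_{L^2}^2\dd s$ in the quadratic variation of $M^\eps_\varphi$, matching~\eqref{e:SHEintro}.

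Tightness follows by combining Gaussian hypercontractivity, which controls the (always white-noise) marginals of $\langle\eta^\eps(t),\varphi\rangle$, with temporal regularity estimates. The nontrivial ingredient is a bound on the $L^2(\mathbf{P})$ norm of the additive functional $\int_0^t\lambda_\eps\cN^\eps_\varphi(s)\dd s$, supplied by the \emph{It\^o trick}: this controls stationary additive functionals by an $H^{-1}$-type seminorm induced by the symmetric part of the generator $\gen$. Expanding in Wiener chaoses over $\P$, the estimate reduces to a resolvent-type problem on Fock space, tractable chaos by chaos.

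Identification of the limit is then recast as a \emph{Fluctuation--Dissipation Theorem}: construct a Fock-space functional $\mathscr H^\eps_\varphi[\eta]$ satisfying
\begin{equation*}
\lambda_\eps\cN^\eps_\varphi=\tfrac{D_\SHE}{2}\langle\eta^\eps,(\fw\cdot\nabla)^2\varphi\rangle+\gen\mathscr H^\eps_\varphi+r^\eps_\varphi,
\end{equation*}
with $r^\eps_\varphi$ and the boundary contributions $\mathscr H^\eps_\varphi[\eta^\eps(t)]-\mathscr H^\eps_\varphi[\eta^\eps(0)]$ vanishing in $L^2(\mathbf{P})$ as $\eps\to0$ (again by the It\^o trick). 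Plugging this identity into the Duhamel formula and using It\^o's formula on $\mathscr H^\eps_\varphi(\eta^\eps(t))$, the nonlinear drift is absorbed, up to negligible error, into the effective Laplacian term plus an extra martingale increment whose quadratic variation, computed via the carr\'e-du-champ $\Gamma(\mathscr H^\eps_\varphi,\mathscr H^\eps_\varphi)$, yields exactly $D_\SHE\|\fw\cdot\nabla\varphi\|_{L^2}^2$. Uniqueness of the limit law is then the standard well-posedness of the linear SHE~\eqref{e:SHEintro} on $\T^d$ started from white noise.

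The main obstacle is the construction of $\mathscr H^\eps_\varphi$ together with the determination of $D_\SHE$, i.e.\ the inversion of $\gen$ on the nonlinearity. Splitting $\gen=\gensy+\gena$ with $\gensy$ the OU part (symmetric, diagonal in chaos, coercive) and $\gena$ the antisymmetric transport part, which carries the factor $\lambda_\eps$ and maps the $n$-th chaos to the $(n\pm1)$-st, the formal Neumann series $\sum_{k\ge0}\gensy^{-1}(-\gena\gensy^{-1})^k$ produces a diagrammatic \emph{path expansion} for the resolvent and hence for the effective diffusivity. For $d\ge3$ each transport insertion carries a factor $\lambda_\eps=\eps^{d/2-1}\to0$, ensuring absolute convergence of the series in the relevant Fock norm; summing the resulting tree diagrams yields the (nontrivial but finite) constant $D_\SHE$. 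For $d=2$ the series is borderline divergent, and the prefactor $\lambda_\eps=(\log\eps^{-2})^{-1/2}$ exactly compensates the logarithmic growth; the dominant chains of chaos transitions must then be isolated through a self-similar \emph{Replacement Lemma} which reduces the $n$-th chaos contribution to the $(n-1)$-st, producing a fixed-point equation for $D_\SHE$ whose explicit solution is~\eqref{eq:DSHEd2}.
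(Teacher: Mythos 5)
Your high-level architecture agrees with the paper's: tightness via the It\^o trick, identification of subsequential limits through a martingale problem for the limiting linear SHE, reduction to a Fluctuation--Dissipation relation in which a corrector absorbs the nonlinear drift and produces both an effective Laplacian and an extra martingale, and construction of the corrector by inverting the generator on Fock space, using a path (resolvent) expansion in $d\ge3$ and the Replacement Lemma in $d=2$.

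There is, however, a genuine gap in the way you justify the path expansion for $d\ge3$. You assert that ``each transport insertion carries a factor $\lambda_\eps=\eps^{d/2-1}\to0$, ensuring absolute convergence of the series in the relevant Fock norm.'' This is incorrect: the operators $T^\eps_\pm=(-\gensy)^{-\half}\gena_\pm(-\gensy)^{-\half}$ already incorporate the factor $\lambda_\eps$, and their operator norm on $\fock_n$ is of order $\sqrt n$ \emph{uniformly in} $\eps$ (this is exactly the content of the Graded Sector Condition, Lemma~\ref{l:GenaBound}; the scaling $\lambda_\eps\sim\eps^{d/2-1}$ is precisely chosen to make $T^\eps_\pm$ bounded, not small). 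Consequently $T^\eps_{2,n}$ has norm $\sim\sqrt n$ and the Neumann series for $(I-T^\eps_{2,n})^{-1}$ has no chance of converging absolutely, uniformly in $\eps$ and $n$. What is genuinely small is the operator $(-\gensy)^{-1}\genap$, i.e.\ one extra factor of $(-\gensy)^{-\half}$ compared with $T^\eps_+$ (cf.\ \eqref{eq:operatorepiccolo}); it is this extra regularisation, present once inside the relevant scalar products, that makes each \emph{fixed}-order term vanish. To actually sum the expansion the paper uses the Laplace representation $(I-T^\eps_{2,n})^{-1}=\int_0^\infty e^{-s}e^{sT^\eps_{2,n}}\dd s$, legitimate because $T^\eps_{2,n}$ is skew-symmetric and hence $e^{sT^\eps_{2,n}}$ is a contraction, and only then expands the exponential at fixed integration variable, justifying each exchange of limits (Section~\ref{sec:avoiding}). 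Without some such device your argument does not close.

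Two secondary remarks. First, for the identification of the limiting quadratic variation you propose to compute the carr\'e-du-champ of the corrector $\Gamma(\mathscr H^\eps_\varphi,\mathscr H^\eps_\varphi)$ directly; the paper instead verifies the martingale property for both linear \emph{and quadratic} observables $F_1,F_2$, which lets it avoid this separate computation. Both routes are feasible, but the direct carr\'e-du-champ computation is an additional estimate you would have to supply. Second, in $d=2$ the Replacement Lemma does not produce a fixed-point equation for the number $D_\SHE$ directly; it yields an approximate operator fixed point $\cH^\eps\approx\gensy^\fw\cG^\eps$ with $\cG^\eps=G(\Ll(-\gensy))$, where $G$ solves the ODE~\eqref{e:ODE}, and then $D_\SHE=G(1)$. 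The distinction matters because the whole function $G$ is needed in the intermediate estimates, not just its value at $1$.
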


We will let
\begin{eqnarray}
  \label{eq:defD}
  \gensy^\fw\eqdef\frac12(\fw\cdot\nabla)^2,\qquad
  \cD\eqdef D_\SHE \gensy^\fw.
\end{eqnarray}

Before proceeding to the proof of the above theorem, let us make a few comments on 
its statement and on the choice of $\lambda_\eps$ in~\eqref{eq:lambdaeps}. 
Notice that in any dimension $d\geq 2$, the constant $D_\SHE$ appearing in the limiting 
equation is {\it strictly} positive. Hence, despite the presence of the vanishing factor 
$\lambda_\eps$ in front of the nonlinearity, not only the quadratic term does not go to $0$ 
but actually produces a new ``Laplacian'' and a new noise. Further, this new Laplacian (and noise), in a sense,  
``feels'' the small-scale behaviour of the field $\eta$ in~\eqref{eq:toroN} 
as it depends on the vector $\fw$, which in turn describes 
its microscopic dynamics. A similar phenomenon has already been observed for other 
critical and super-critical SPDEs, even though we are not aware of examples 
in which a Laplacian of the form above was derived. 

We have already pointed out that in $d=2$~\eqref{e:BKS} is formally scale invariant. 
That said, the above mentioned result in~\cite{Yau} 
for ASEP suggests that also~\eqref{eq:toroN} is logarithmically superdiffusive and the additional 
diffusivity can only come from the non-linear term. What the previous theorem 
shows is that the choice in~\eqref{eq:lambdaeps} guarantees that 
the nonlinearity ultimately gives a non-vanishing order 1 contribution. 
What might appear puzzling is that, by translating the result of Yau to~\eqref{e:BurgersScaled}, 
the diffusivity $D^\eps(t)$ of the scaled process grows as $|\log \eps|^{2/3}$ so that one might be led 
to think that $\lambda_\eps$ should be chosen accordingly as $|\log \eps|^{-2/3}$. While this is not 
the case, the expression in~\eqref{eq:DSHEd2} formally implies the result of Yau, as can 
be seen by taking $\fw=\fw_\eps$ such that $|\fw_\eps|\eqdef \lambda_\eps^{-1}$,
so that
\[
D_\SHE(\fw\cdot \nabla)^2\approx \left(\frac{3|\log \eps|}{2\pi}\right)^{2/3}(\hat\fw\cdot \nabla)^2
\]
and $\hat{\fw}$ is the unit vector parallel to $\fw$. 
%(remember that the constant in~\eqref{eq:DSHEd2} multiplies the operator $(\fw\cdot\nabla)^2$). 

Furthermore, a similar scaling has been considered 
in the critical dimension $d=2$ in the context of the KPZ 
equation~\cite{CD,CSZ,Gu2020, caravennaCritical2dStochastic2023}, 
of the Anisotropic KPZ equation~\cite{CES,CETWeak} and of the stochastic Navier--Stokes equation~\cite{CK}. 
What is interesting is that, even though these latter examples in principle have 
different large scale diffusivity, i.e., $D(t)\sim t^\beta$ for some $\beta>0$~\cite{Forrest} for KPZ, 
$D(t)\sim \sqrt{\log t}$ for AKPZ 
and Navier--Stokes ~\cite{CET20,Adler}, they all display {\it non-trivial} 
(in that the non-linearity nontrivially contributes to the limit) {\it Gaussian
fluctuations} under the same weak coupling scaling $\lambda_\eps=|\log\eps|^{-\half}$, thus suggesting some sort of universality for it. 
This is to be compared with the one-dimensional case, in which, setting $\lambda_\eps\eqdef\sqrt\eps$, 
one recovers the so-called {\it weakly asymmetric} scaling under which convergence of 
ASEP to the one-dimensional KPZ equation was first shown in~\cite{BG} and since then for a wide 
variety of models. 
\subsection{Open problems}\label{sec:openproblems}

Our results raise several interesting questions, as
\begin{itemize}[noitemsep]
\item obtain Gaussian fluctuations for~\eqref{e:BKS} in the case of more general non-linearity, 
i.e. $\fw\cdot\nabla \eta^2$ replaced by $\fw\cdot\nabla F(\eta)$, for instance for polynomial $F$. 
The main difficulty is to obtain operator estimates similar to those of Lemma \ref{l:GenaBound}, 
which though seem highly non-trivial.
\item Prove the analog of Theorem \ref{thm:main} (for $d=2$) for ASEP on $\Z^2$ in the same limit of weak asymmetry.
\item In dimension $d=2$, define $\tilde\eta^\eps(x,t)=\eps^{-1}\eta( x/\eps, t/(\eps^2\tau(\eps)))$ 
(to be compared with \eqref{e:scaling}) and find the right correction $\tau(\eps)$ to the diffusive scaling 
so that a non-trivial scaling limit as $\eps\to0$ exists. This corresponds to a \emph{strong coupling regime}. 
On the basis of \cite{Yau}, the natural guess is $\tau(\eps)=|\log \eps|^{2/3}$ but the identification of the limit 
is hard as the regularising properties of the Laplacian vanish. 
\end{itemize}

More challenging will be to move towards models for which the invariant measure is non-Gaussian or non-explicit. 
One guiding heuristic could be that the limiting Gaussian fluctuations and 
associated Gaussian stationary distribution could still provide a good  setting for the analysis.

\subsection{Idea of the proof }\label{sec:ideas}

The idea of the proof finds its roots in the approach detailed in~\cite{Komorowski2012} in the context of 
interacting particle systems. To summarise it and see how it translates to the present context, 
let us consider the weak formulation of~\eqref{e:BurgersScaled} started from a spatial white noise $\mu$,  
which, for a given test function $\phi$, reads 
\begin{equ}[e:WeakIntro]
\eta^\eps_t(\phi)-\mu(\phi)-\tfrac12\int_0^t \eta^\eps_s(\Delta\phi)\dd s-\int_0^t\cN^\eps_\phi(\eta^\eps_s)\dd s=M_t(\phi)
\end{equ}
where $\cN^\eps_\phi$ stands for the nonlinearity tested against $\phi$ and $M$ is the martingale 
associated to the space-time white noise $\xi$. Now, upon assuming the sequence $\{\eta^\eps\}_\eps$ 
to be tight in the space $C([0,T]\cS'(\T^d))$, we see that all the terms in the above expression 
converge (at least along subsequences) but we have no information concerning the nonlinearity.
For Theorem~\ref{thm:main} to be true, we need this latter term to produce both a {\it dissipation} term 
of the form 
  % $D_\SHE\gensy^\fw\eta$, where $\gensy^\fw\eqdef\frac12(\fw\cdot\nabla)^2$,
$\eta(\cD\phi)$, with $\cD=\frac12 D_{SHE}(\fw\cdot \nabla)^2$ defined in \eqref{eq:defD},
 and a {\it fluctuation} 
 part which should encode the additional noise in~\eqref{e:SHEintro}.
(See Theorem \ref{thm:WellPosedMP} below about the well-posedness of the martingale problem associated to the limit equation \eqref{e:SHEintro}).
In other words,  the problem is to identify a {\it fluctuation-dissipation} relation~\cite{LY}, i.e. to determine $D_\SHE>0$ 
and $\cW^\eps$ such that 
\begin{equ}[e:FD]
    \cN^\eps_\phi(\eta^\eps)=-\gen \cW^\eps(\eta^\eps) +\eta^\eps(\cD\phi)
    %D_\SHE\gensy^\fw\eta^\eps(\phi)
\end{equ} 
where $\gen$ is the generator of $\eta^\eps$. Indeed, since by Dynkin's formula, there exists a martingale $\cM^\eps(\Weps)$ 
such that
\begin{equ}
\cW^\eps(\eta^\eps_t)-\cW^\eps(\mu)-\int_0^t \gen\cW^\eps(\eta^\eps_s)\dd s=\cM_t^\eps(\Weps), 
\end{equ}
given~\eqref{e:FD}, we could rewrite the nonlinear term in~\eqref{e:WeakIntro} as
\begin{equ}[e:NonlinExp]
\int_0^t\cN^\eps_\phi(\eta^\eps_s)\dd s=\int_0^t \eta_s^\eps(\cD\phi)\dd s -\cM_t^\eps(\Weps)+o(1)\,,
\end{equ}
where $o(1)$ is a vanishing summand which contains the boundary terms.
The advantage of the above is that we have expressed the nonlinearity
in terms of a drift which captures the additional diffusivity and a
martingale part which instead encodes the extra noise.  At this point,
tightness would ensure convergence of the former and we would be left
to prove that the sequence $\cM^\eps(\Weps)$ converges to a martingale
with the correct quadratic variation. %\giuseppe{Maybe this should be
%  modified. }
\medskip

The (hardest) problem is clearly the derivation of the fluctuation-dissipation relation and 
this is the point from which our analysis departs from that in~\cite{Komorowski2012}. 
Even though, as we will see, it is enough to determine~\eqref{e:FD} approximately (i.e. that the difference 
of left and right hand side is small in a suitable sense), in the equation there are two unknowns, 
$D_\SHE$ which is {\it not} apriori given to us, and 
$\cW^\eps$ which, even if we were given the previous, is the solution of an infinite dimensional equation. 
To separate the two issues, we introduce a suitable truncation which removes the second 
summand from the right hand side of~\eqref{e:FD}, so that we can first solve for $\cW^\eps$ and 
then project back and determine $\cD=D_\SHE\gensy^\fw$. 
This is done in $d\geq 3$ and $d=2$ in very different ways. In the former case, we will control 
$\cW^\eps$ similarly to~\cite{LY}, as we can show it satisfies a graded sector condition 
in the spirit of~\cite[Section 2.7.4]{Komorowski2012}. Nonetheless, 
their functional analytic approach does not apply in our setting and in particular, for the identification 
of $D_\SHE$ and $\gensy^\fw$, we devise a new method which is detailed in Section~\ref{sec:d3}. 
For $d=2$ instead, we introduce a novel Ansatz which is based on the idea that at large scales 
the generator of $\eta^\eps$ should approximate a {\it modulated version} of the generator of~\eqref{e:SHEintro}. 
This Ansatz simplifies dramatically the (iterative) analysis performed in~\cite{CETWeak} and 
is heavily based on the Replacement Lemma in Section~\ref{sec:d2}.

\subsection*{Notations and function spaces}

We let $\T^d$ be the $d$-dimensional torus of side length $2\pi $. % If $N=1$ then we simply write $\T^2$ instead of $\T_N^2$.
We denote by $\{e_k\}_{k\in\Z^d}$ the Fourier basis defined via 
$e_k(x) \eqdef \frac{1}{(2\pi)^{d/2}} e^{i k \cdot x}$ which, for all $j,\,k\in\Z^d$, satisfies 
$\langle e_k, e_{j}\rangle_{L^2}= \mathds{1}_{k=j}  $, where the scalar product at the right hand side is the usual 
complex scalar product. 
% { I think the following sentence can be omitted: The basis functions $e_k$ can be decomposed in their real and imaginary part, so that $e_k=a_k+i b_k$ 
% and the system $\{a_k\}_{k \in \Z_{\mathrm{diag}}^2} \sqcup \{b_k\}_{k \in \Z_{\mathrm{diag}}^2\setminus\{0\}}$ 
% forms a real valued orthogonal basis of $L^2(\T_L^2)$, where $\Z_{\mathrm{diag}}^2= \{(k_1,k_2)\in\Z^2:\, k_1\geq k_2\}$. }

The Fourier transform of a function $\phi\in L^2(\T^d)$ will be denoted by 
$\cF(\phi)$ or $\hat\phi$ and, for $k\in\Z^d$, is given by the formula
	\begin{equation}\label{e:FT}
	\cF(\varphi)(k) =\hat\varphi(k)\eqdef \langle \phi, e_k\rangle_{L^2}= \int_{\T^d} \varphi(x) e_{-k}(x)\dd x\,, 
	\end{equation}
so that in particular
\begin{equation}\label{e:FourierRep}
\varphi = % \frac{1}{N^2}
\sum_{k\in\Z^d} \hat\varphi(k) e_k\,,\qquad\text{in $L^2(\T^d)$. }
\end{equation}
Let $\cS(\T^d)$ be the space of smooth real-valued functions on $\T^d$ and $\cS'(\T^d)$ 
the space of real-valued distributions given by the dual of $\cS(\T^d)$. 
For any $\eta\in\cS'(\T^d)$ and $k\in\Z^d$, 
we will denote its Fourier transform by 
\begin{equation}\label{e:complexPairing}
\hat \eta(k)\eqdef \eta(e_{-k})=\eta(\mathrm{Re} (e_{-k}))+\iota \eta(\mathrm{Im}(e_{-k}))\,.%=\eta(a_k)-i \eta(b_k)
\end{equation} 
Note that $\overline{\hat\eta(k)}=\hat\eta(-k)$. Since the zero mode $\hat\eta(0)$ of the solution of the Burgers equation 
is constant in time we will set it to be $0$, and therefore  
we will only care about $\hat\eta(k)$ for $k\in\Z^d_0\eqdef\Z^d\setminus\{0\}$. 
Moreover, we recall that the Laplacian $\Delta$ on $\T^d$ has eigenfunctions $\{e_k\}_{k \in \Z^d}$ 
with eigenvalues $\{-|k|^2\,:\,k\in\Z^d\}$, so that the operator $(-\Delta)^{\frac12}$  is defined
by its action on the basis elements 
	\begin{equation}\label{e:fLapla}
	(-\Delta)^{\frac12} e_k\eqdef |k|e_k\,,\qquad k\in\Z^d\,.
	\end{equation}	
        In particular, $(-\Delta)^{ \frac12}$ is an invertible linear
        bijection on distributions with null $0$-th Fourier mode.
We denote by $H^1(\T^d)$ the space of mean-zero functions $\phi$ such that the norm 
\begin{equ}
\|\phi\|_{H^1}^2\eqdef\|(-\Delta)^{\frac12}\phi\|_{L^2}^2\eqdef\sum_{k\in\Z^d_0} |k|^2 |\hat\phi(k)|^2
\end{equ} 
 is finite.  
 
 Throughout the notes, we will write $a\lesssim b$ if there exists a constant $C>0$ such that $a\leq C b$. 
%and $a\sim b$ if $a\lesssim b$ and $b\lesssim a$. 
We will adopt the previous notations only in case in which 
the hidden constants do not depend on any quantity which is relevant for the result. 
When we write $\lesssim_T$ for some quantity $T$, it means that the constant $C$ implicit in the bound depends on $T$.

\section{The main tools: Chaos decomposition and the generator}

\label{sec:maintools}
In this section, we introduce the main technical tools we will use in our analysis. More specifically, 
we will first recall a few elements of Wiener-space analysis following the classical book of Nualart~\cite{Nualart2006}. 
Then, we will derive the generator of~\eqref{e:BurgersScaled} and provide some preliminary estimates on 
its action over suitable observables.  

\subsection{A primer on Wiener-chaos analysis} \label{sec:WC}

%
%
%\giuseppeText{\begin{enumerate}
%\item what is a Wiener space
%\item Wiener-chaos decomposition, stochastic integrals and Fock-spaces. Make a couple of examples, i.e. for instance take $:eta(phi) eta(psi):$ and work out its kernel.
%\item Malliavin derivative. Give an example of how it acts on some simple function, say that of the previous item.
%\item Gaussian hypercontractivity
%\end{enumerate}}
%
%\fabioText{We don't want to write a book, but our usual Wiener chaos section needs to be ``de-zipped''}
Let $(\Omega,\cF,\P)$ be a complete probability space and 
$\eta$ be a mean-zero spatial white noise on the $d$-dimensional torus $\T^d$, i.e. 
$\eta$ is a Gaussian field with covariance
\begin{equ}\label{eq:spatial:white:noise}
\E[\swn(\vphi)\swn(\psi)]=\langle \vphi, \psi\rangle_{L^2}
\end{equ}
where $\varphi,\psi\in H\eqdef L^2_0(\T^d)$, 
the space of square-integrable functions with $0$ total mass, 
and $\langle\cdot,\cdot\rangle_{L^2}$ is the usual scalar product in $L^2(\T^d)$. 
\medskip

\noindent{\bf Wiener-chaos decomposition.}
In the language of~\cite[Ch. 1]{Nualart2006}, the process $\eta=\{\eta(\phi)\colon \phi\in H\}$ 
is an isonormal Gaussian process. One of the advantages of working with isonormal Gaussian processes 
is that the space of square integrable random variables with respect to their law $\P$, 
i.e. the space $L^2(\Omega)\eqdef L^2(\Omega,\cF,\P)$, 
admits an orthogonal decomposition in terms of the so-called homogeneous Wiener chaoses $(\SH_n)_{n\in\N}$. 
For $n\in\N$, $\SH_n$ is the closed linear subspace of 
$L^2(\Omega)$ generated by the random variables $\{H_n(\eta(\phi))\colon \phi\in H\}$ where 
$H_n$ is the Hermite polynomial of order $n$ defined according to 
\begin{equ}
H_n(x)\eqdef (-1)^n e^{\frac{x^2}{2}}\frac{\dd^n}{\dd x^n}e^{-\frac{x^2}{2}}\,.
\end{equ}
As can be readily checked, the Hermite polynomials and the corresponding homogeneous chaoses for 
$n=0,1,2$ are 
\begin{equ}[e:ChaosExample]
\begin{matrix*}[l]
n=0 \quad& H_0(x)=1\,, \quad&\SH_0=\R\\
n=1 \quad& H_1(x)=x\,,\quad&\SH_1=\mathrm{span}\{\eta(\phi)\colon \phi\in H\}\\
n=2 \quad& H_2(x)=x^2-1\,, \quad&\SH_2=\mathrm{span}\{\eta(\phi)^2-\|\phi\|^2\colon \phi\in H\}\,.
\end{matrix*}
\end{equ}
What~\cite[Theorem 1.1.1]{Nualart2006} tells us is that whenever $m\neq n$, $\wc_n$ and $\wc_m$ are orthogonal, 
i.e. if $X\in \SH_m$ and $Y\in \SH_m$ then $\E[XY]=0$, and  
\begin{equ}
L^2(\Omega)=\bigoplus_{n\in\N}\SH_n\,.
\end{equ}
\medskip

\noindent{\bf Gaussian Hypercontractivity.} There is another property which makes the Wiener-chaos analysis 
particularly convenient, namely Gaussian 
hypercontractivity. It states that, for random variables in a fixed chaos, all moments are equivalent, 
which is to say that $\SH_n\subset \cap_{p\in(0,\infty)} L^p(\Omega)$. 
More precisely, (see~\cite[Theorem 5.10]{Jan}), for every $p,q\in(0,\infty)$ there exists a constant $C=C(p,q)>0$ 
such that for all $X\in\SH_n$ we have 
\begin{equ}[e:GaussHyper]
\E[|X|^q]^{\frac1{q}}\leq C^n \E[|X|^p]^{\frac1{p}}\,.
\end{equ}
We will use~\eqref{e:GaussHyper} in the form of a reversed Jensen's inequality, i.e. for $q\leq p$. 
In other words, Gaussian hypercontractivity allows to bound higher moments of random variables in a given chaos 
in terms of its lower ones, thus being an extremely useful tool when it comes to apply Kolmogorov's type criterion 
(and consequently prove tightness), see Sections~\ref{sec:Ito} and~\ref{sec:Tightness}. 
Clearly, for this to be possible there is a price to pay, 
namely the constant $C^n$ appearing at the right hand side. According to~\cite[Remark 5.11]{Jan}, 
$C(2,q)=\sqrt{q-1}$ so that for $q>2$ the bound worsens exponentially 
in $n$. That said, we will mainly use~\eqref{e:GaussHyper} at fixed $n$. 
\medskip

\noindent{\bf From Wiener-chaos to Fock spaces.}
From the expressions at the right hand side~\eqref{e:ChaosExample}, it is then not hard to believe that  
we can be associate each element in $\SH_n$ to an element in the $n$-th Fock 
space $\fock_n\eqdef L_\sym^2((\T^{d})^n)$, the latter being the space of
functions in $L^2_0(\T^{dn})$ which are symmetric
with respect to permutation of variables. 
To continue with the example in~\eqref{e:ChaosExample}, 
we have 
\begin{equs}[e:FockExample]
n=0  \qquad&\fock_0=\R\\
n=1 \qquad&\fock_1=\mathrm{span}\{\phi\colon \phi\in H\}=H\\
n=2 \qquad& \fock_2=\mathrm{span}\{\phi\otimes\phi-\|\phi\|_{L^2}^2\colon \phi\in H\}\,.
\end{equs}
Setting $\fock\eqdef \bigoplus_{n \ge 0} \fock_n$, it can be rigorously shown 
(see~\cite[Eq. (1.9)]{Nualart2006}) that there exists 
an isomorphism $\sint{}:\fock \to L^2(\Omega)$ whose restriction $\sint_n$ to $\fock_n$ 
is itself an isomorphism onto $\SH_n$. 
By~\cite[Theorem 1.1.2]{Nualart2006}, for every $F\in L^2(\Omega)$ there exists a family of kernels 
$(f_n)_{n\in\N}\in\fock$ such that $F=\sum_{n\geq 0} I_n(f_n)$ and 
\begin{equ}[e:Isometry]
\E[F^2]\eqdef\|F\|^2 = \sum_{n\geq 0} n!\|f_n\|_{L^2_n}^2
\end{equ}
where $L^2_n\eqdef L^2((\T^{d})^n)$, and we take the right hand side as the definition of the scalar product on $\fock$, i.e. 
\begin{equ}[e:ScalarPFock]
 \langle f,g\rangle=\sum_{n\geq 0} \langle f_n,g_n\rangle \eqdef \sum_{n\geq 0} n!\langle f_n,g_n\rangle_{L^2_n}.
\end{equ}
In particular,~\eqref{e:Isometry} shows that $I$ (and $I_n$) is an isometry between $(\fock, \langle\cdot,\cdot\rangle)$ (resp. 
$(\fock_n, \langle\cdot,\cdot\rangle)$) and $(L^2(\Omega), \E[\cdot])$ (resp. $(\SH_n, \E[\cdot])$). 

\begin{remark}\label{rem:NotAbuse}
In view of the isometry between $\fock$ and $L^2(\Omega)$, throughout the paper, we will abuse notation 
and denote with the same symbol operators acting on $\fock$ and their composition 
with the isometry $I$, which is an operator acting on $L^2(\Omega)$. 
More precisely, 
if $\SH_n\ni F_n= I_n(f_n)$ and $\cO=(\cO_n)_{n\ge0}$ is an operator on $L^2(\Omega)$ with 
$\cO_n\colon \SH_n\to\fock$, then we will write $\cO F= \cO_n I_n(f_n)= I(\cO_n f_n)$. 
\end{remark}

\noindent {\bf Malliavin Derivative.} Let us introduce the notion of Malliavin derivative~\cite[Definition 1.2.1]{Nualart2006}. 
Let $F$ be a cylinder random variable, i.e. $F(\eta)=f(\eta(\phi_1),\dots,\eta(\phi_n))$ with $f\colon\R^n\to\R$ smooth 
and growing at most polynomially at infinity, and $\phi_1,\dots,\phi_n\in H$. 
We define the Malliavin derivative of $F$ as the element $DF\in L^2(\Omega, H)$ given by  
\begin{equ}[e:MalDer]
DF(\eta)(\cdot)\eqdef \sum_{i=1}^n \partial_i f(\eta(\phi_1),\dots,\eta(\phi_n))\phi_i(\cdot)
\end{equ}
and, for $k\in\Z_0^d$, denote its Fourier transform by 
\begin{equation}\label{e:Malliavin}
D_{k} F(v)\eqdef \cF(DF(v))(k)=\langle DF, e_{k}\rangle_{L^2}\,.
\end{equation}
Note that by definition, if $F\in\SH_n$ then $DF\in\SH_{n-1}$. 

\begin{remark}
In what follows we will often use the Malliavin derivative on random variables which depend 
explicitly on finitely many Fourier modes of $\eta$, i.e. on finite subsets of 
$\{\hat\eta(k)=\eta(e_{-k})\colon k\in\Z^d_0\}$ (see~\eqref{e:complexPairing}). 
For $F=f(\hat\eta(k_1),\dots,\hat\eta(k_n))$, one should think of $D_k F$ as 
\begin{equ}[e:MalliavinFourier]
D_k F=(\partial_{x_k} f)(\hat\eta(k_1),\dots,\hat\eta(k_n))\,.
\end{equ}
To see this, let us consider for example $F=\hat\eta(k_1)\hat\eta(k_2)-\1_{k_1\neq -k_2}$. 
Assume for simplicity that $k_1\neq-k_2$ so that $\1_{k_1\neq -k_2}=0$. From~\eqref{e:complexPairing}, 
upon writing $e_k(x)=\cos(k\cdot x)+\iota \sin(k\cdot x)=c_k(x)+\iota s_k(x)$, we have 
\begin{equ}
F=\eta(c_{-k_1})\eta(c_{-k_2})-\eta(s_{-k_1})\eta(s_{-k_2}) +\iota (\eta(c_{-k_1})\eta(s_{-k_2})  +\eta(s_{-k_1})\eta(c_{-k_2}))
\end{equ}
which means that, extending $D$ by linearity, $DF$ is given by 
\begin{equs}[e:DerMallFourier]
DF=&\eta(c_{-k_2}) c_{-k_1} +\eta(c_{-k_1})c_{-k_2} -\eta(s_{-k_2}) s_{-k_1} -\eta(s_{-k_1})s_{-k_2} \\
&+\iota [\eta(s_{-k_2}) c_{-k_1} +\eta(c_{-k_1})s_{-k_2} +\eta(c_{-k_2}) s_{-k_1} +\eta(s_{-k_1})c_{-k_2}]\\
=&\eta(c_{-k_2}) e_{-k_1} + \eta(c_{-k_1})e_{-k_2} + \iota [\eta(s_{-k_2}) e_{-k_1} + \eta(s_{-k_1})e_{-k_2}]\\
=&\eta(e_{-k_2}) e_{-k_1} + \eta(e_{-k_1})e_{-k_2}=\hat\eta(k_2) e_{-k_1}+\hat\eta(k_1)e_{-k_2}
\end{equs}
from which the claim follows upon computing $D_kF$ as in \eqref{e:Malliavin}. 
\end{remark}

\noindent{\bf Integration by Parts.} We conclude this section by recalling the following integration 
by parts formula on Wiener space. Thanks to~\cite[Lemma 1.2.2]{Nualart2006}, this is
\begin{equation}\label{eq:intbyparts}
\E[G D_kF]=\E[G\langle DF, e_{k}\rangle_{L^2}]= \E[-F D_kG + FG\hat{\eta}(k)]\,. 
\end{equation}
As an illustration of how to apply it, consider $F=f(\eta(\phi_1),\dots,\eta(\phi_n))$ as above and $G=\hat\eta(k_1)\hat\eta(k_2)-\1_{k_1=-k_2}$, 
with $k_1,k_2\in\Z^d_0$ (so that $G\in\SH_2$). Assume, as above, that $k_1\neq -k_2$. Then
\begin{equs}[e:AntiSymm]
\E[ \hat\eta(k_1)&\hat\eta(k_2) D_{-k_1-k_2}F(\eta) ]=\E[G D_{-k_1-k_2}F] \\
&= \E[-F D_{-k_1-k_2}G  + FG\hat{\eta}(-k_1-k_2)]\\
&= \E[-F D_{-k_1-k_2}\hat\eta(k_1)\hat\eta(k_2)  + F\hat\eta(k_1)\hat\eta(k_2)\hat{\eta}(-k_1-k_2)]\\
&= \E[F\hat\eta(k_1)\hat\eta(k_2)\hat{\eta}(-k_1-k_2)]
\end{equs}
where in the last step we used that neither $k_1$ nor $k_2$ equals $k_1+k_2$, so that by~\eqref{e:DerMallFourier}
$D_{-k_1-k_2}G=D_{-k_1-k_2}\hat\eta(k_1)\hat\eta(k_2)=0$.

\subsection{The equation, its generator}

In this section, we want to derive a number of properties of the solution to the (rescaled) Burgers equation, 
such as its Markovianity, the structure of its generator and how the latter acts on the Fock space $\fock$. 
Let us first write~\eqref{e:BurgersScaled} in its weak formulation. 
For $\phi\in\cS(\T^d)$ and $t\geq 0$, it reads 
\begin{equ}[e:SPDEweak]
\eta^\eps_t(\phi)-\eta^\eps_0(\phi)=\half\int_0^t\eta^\eps_s(\Delta\phi)\dd s+\int_0^t\cN^\eps_\phi(\eta^\eps_s)\dd s +\int_0^t\xi(\dd s, (-\Delta)^{\half}\phi)
\end{equ}
where $\eta^\eps_0$ is the initial condition, $\xi$ is a space-time white noise so that 
\begin{equ}[eq:STWN]
\xi(\dd t, (-\Delta)^{\half}\phi)=\sum_{k\in\Z^d_0} |k|\hat\phi(k)\dd B_t(k)
\end{equ}
for $B(k)=\int_0^t \xi(\dd s, e_{-k})$ complex valued Brownian motions satisfying 
$\overline{B(k)}= B(-k)$ and $\Exp[B_t(k)B_s(\ell)]=(t\wedge s) \langle e_k, e_{-\ell}\rangle_{L^2}=(t\wedge s)\1_{k=-\ell}$,  
%$\dd\langle B(k), B(\ell) \rangle_t =  \mathds{1}_{\{k+\ell=0\}}\, \dd t$ 
and $\NN_\phi(\eta)$ is the nonlinearity tested against $\phi$, i.e.
\begin{equs}[e:nonlin1]
\NN_\phi(\eta)&\eqdef \lambda_\eps \fw\cdot\Pi_{1/\eps}\nabla(\Pi_{1/\eps}\eta)^2(\phi)\\
&=\frac{\iota}{(2\pi)^{\frac{d}2}}\lambda_\eps\sum_{\ell,m}  \indN{\ell,m}\fw\cdot(\ell+m)\hat\phi(-\ell-m)\hat\eta(\ell)\hat\eta(m)
\end{equs}
with $\iota=\sqrt{-1}$ and 
\begin{equ}[eq:J]
    \indN{\ell, m}= \1\{0<\eps |\ell|_\infty\le 1, 0<\eps|m|_\infty\le 1,0<\eps|\ell+m|_\infty\le 1\}\,.
\end{equ}
In accordance with Remark \ref{rem:pignolerie}, for $d=2$ the sup-norm 
$|\cdot|_\infty$ in~\eqref{eq:J} is replaced by the Euclidean norm $|\cdot|$ instead.

%From~\eqref{e:SPDEweak} it follows that 
%the Fourier modes $\hat \eta^\eps(k)$, $|k|\ge \eps^{-1}$ evolve like
%independent (and $\eps$-independent) Ornstein-Uhlenbeck processes.

\begin{lemma}
\label{lem:generalproperties}
For every deterministic initial condition $\eta_0$, the solution $t\mapsto \eta^\eps_t$ 
of \eqref{e:SPDEweak} is a strong Markov process which exists globally in time. 
The generator $\gen$ of $\eta^\eps$ can be written as
$\gen=\gensy+\gena$ where the action of $\gensy$ and $\gena$ on smooth cylinder functions $F$
is given by 
 \begin{align}
 (\gensy F)(\eta) &\eqdef \frac{1}{2} \sum_{k \in \Z^d_0} |k|^2 (-\hat\eta(-k) D_k  +  D_{-k}D_k )F(\eta) \label{e:gens}\\
   (\gena F)(\eta) &\eqdef \frac\iota{(2
    \pi)^{d/2}}\lambda_\eps \sum_{m,\ell \in \Z_0^d} \indN{\ell,m} \fw\cdot (\ell+m) \hat \eta(m) \hat \eta(\ell) D_{-m-\ell} F(\eta). \label{e:gena}
 \end{align}
 Moreover, the law $\mathbb P$ of the average-zero space white noise is stationary for $\eta^\eps$ and 
 $\gensy$ and $\gena$ are respectively symmetric and skew-symmetric with respect to $\P$. 
\end{lemma}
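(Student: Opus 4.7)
My first step is to work mode-by-mode in Fourier. Thanks to the truncation~\eqref{eq:J}, the nonlinear drift in~\eqref{e:BurgersScaled} vanishes on modes with $\eps|k|_\infty>1$, so the system splits into (i) decoupled one-dimensional complex Ornstein--Uhlenbeck processes at those modes, and (ii) a finite-dimensional SDE with quadratic drift and additive noise governing the ``low'' modes $\{\hat\eta(k)\colon 0<\eps|k|_\infty\le 1\}$. Global existence and the strong Markov property are immediate for the high modes; the only nontrivial point is non-explosion of the low-mode SDE. I would control this by the Lyapunov function $V(\eta)\eqdef\|\Pi_{1/\eps}\eta\|_{L^2}^2$: writing $u\eqdef\Pi_{1/\eps}\eta$, the drift of $V$ coming from the nonlinearity vanishes because
\begin{equ}
\langle\fw\cdot\Pi_{1/\eps}\nabla(\Pi_{1/\eps}\eta)^2,\,\Pi_{1/\eps}\eta\rangle_{L^2}=\langle\fw\cdot\nabla u^2,u\rangle_{L^2}=\tfrac{2}{3}\int_{\T^d}\fw\cdot\nabla u^3\,\dd x=0
\end{equ}
by periodicity, leaving only the linear and It\^o-noise contributions, which are easily controlled. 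Standard SDE theory then gives global existence and the strong Markov property.

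\textbf{Identification of the generator.} For a smooth cylinder $F=f(\hat\eta(k_1),\dots,\hat\eta(k_n))$, my plan is to apply It\^o's formula using the Fourier dynamics $\dd\hat\eta(k)=-\tfrac12|k|^2\hat\eta(k)\,\dd t+\widehat\NN(k)\,\dd t+|k|\,\dd B_t(k)$ and the covariation $\dd\langle B_t(k),B_t(\ell)\rangle=\1_{k=-\ell}\,\dd t$ from~\eqref{eq:STWN}. The key bookkeeping identity is $D_j\hat\eta(\ell)=\1_{j=-\ell}$, which follows at once from $DF=\sum_i(\partial_i f)\,e_{-k_i}$ (cf.~\eqref{e:DerMallFourier}) and gives the rule of thumb $D_{-k_i}F=\partial_i f$. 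Matching the three contributions to the drift of $F(\eta_t^\eps)$, the linear drift yields $-\tfrac12\sum_k|k|^2\hat\eta(-k)D_kF$, the It\^o correction from the noise yields $\tfrac12\sum_k|k|^2 D_{-k}D_kF$, and the nonlinear drift read off from~\eqref{e:nonlin1} yields $\gena F$, after relabelling the summation indices. Altogether this identifies $\gen F=\gensy F+\gena F$ as in~\eqref{e:gens}--\eqref{e:gena}.

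\textbf{(Skew-)symmetry and invariance.} Infinitesimal invariance of $\P$ reads $\E_\P[\gen F]=0$ and follows by setting $G\equiv 1$ in the sharper identities $\E_\P[G\gensy F]=\E_\P[F\gensy G]$ and $\E_\P[G\gena F]=-\E_\P[F\gena G]$. The first is the standard symmetry of the Ornstein--Uhlenbeck generator, verified mode-by-mode via the integration-by-parts formula~\eqref{eq:intbyparts} together with $D_j\hat\eta(\ell)=\1_{j=-\ell}$. For the second, I apply~\eqref{eq:intbyparts} with $k=-\ell-m$ and $H\eqdef G\hat\eta(\ell)\hat\eta(m)$. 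Since $\ell,m\in\Z^d_0$ are nonzero, the ``diagonal'' boundary terms $D_{-\ell-m}\hat\eta(\ell)=\1_{m=0}$ and $D_{-\ell-m}\hat\eta(m)=\1_{\ell=0}$ both vanish, giving
\begin{equ}
\E_\P[G\gena F]=-\E_\P[F\gena G]+c\,\E_\P\!\Bigl[FG\!\sum_{\ell,m}\indN{\ell,m}\,(\fw\cdot(\ell+m))\,\hat\eta(\ell)\hat\eta(m)\hat\eta(-\ell-m)\Bigr]
\end{equ}
with $c=\iota\lambda_\eps/(2\pi)^{d/2}$. Setting $j\eqdef-\ell-m$, so that $\ell+m+j=0$, the symbol $\indN{\ell,m}$ depends only on the three norms $|\ell|,|m|,|\ell+m|$ and is therefore invariant under cyclic permutations of $(\ell,m,j)$; symmetrising $\fw\cdot(\ell+m)=-\fw\cdot j$ over these three cyclic copies produces the prefactor $-\tfrac13\fw\cdot(\ell+m+j)=0$. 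The trilinear sum vanishes, $\gena$ is skew-symmetric, and $\E_\P[\gen F]=0$ follows.

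\textbf{Main obstacle.} The most delicate point is this last trilinear cancellation. It relies on two independent ingredients: the conservative form $\fw\cdot\nabla(\cdot)$ of the nonlinearity, which furnishes the prefactor $\fw\cdot(\ell+m)$, and the three-fold cyclic symmetry of the radial Fourier cutoff $\indN{\ell,m}$ in its three arguments $|\ell|,|m|,|\ell+m|$. A non-radial mollifier would break the cyclic symmetry, leave a nonzero trilinear contribution, and destroy stationarity of $\P$ — this is the technical incarnation of the requirement, emphasised in Section~\ref{sec:StatM}, that the regularisation must preserve the symmetries of the invariant measure.
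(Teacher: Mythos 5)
Your proof is correct and follows essentially the same route as the paper: Fourier-mode decomposition plus the Lyapunov function $\|\Pi_{1/\eps}\eta\|^2_{L^2}$ for global existence and the strong Markov property, It\^o's formula for the generator, and Gaussian integration by parts combined with $D_{-\ell-m}\hat\eta(\ell)=\1_{m=0}=0$ for (skew-)symmetry and invariance. The only presentational difference is the final trilinear cancellation, which the paper obtains by recognising the boundary term as $\E[\cN^\eps_\eta(\eta)FG]$ and invoking $\cN^\eps_\eta(\eta)=\tfrac{\lambda_\eps}{3}\langle\fw\cdot\nabla(\Pi_{1/\eps}\eta)^3,1\rangle=0$ in physical space, whereas you establish the same identity directly in Fourier by cyclically symmetrising over $(\ell,m,-\ell-m)$ using the cyclic invariance of $\indN{\ell,m}$ — equivalent expressions of the same cancellation, with your version making explicit why a non-radial mollifier would break stationarity.
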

\begin{proof}
Very similar arguments were provided in a number of references for equations which 
share features similar to those of~\eqref{e:SPDEweak}, e.g. \cite{CES, GubinelliJara2012} and, 
more comprehensively,~\cite{Gubinelli2016}, so 
we will limit ourselves to sketch some of the proofs. 
\medskip

\noindent{\it Strong Markov property and global in time existence.} 
Concerning the strong Markov property, note that, by writing the time derivative of~\eqref{e:SPDEweak} 
in Fourier variables, i.e. taking $\phi=e_{-k}$, we obtain 
\begin{equ}
\dd \hat\eta^\eps_t(k)=\Big(-\tfrac12|k|^2\hat\eta^\eps_t(k)+\NN_{e_{-k}}(\eta^\eps_t)\Big)\dd t + |k|\dd B_t(k)\,.
\end{equ}
Note that, thanks to the regularisation $\indN{\cdot,\cdot}$ in~\eqref{eq:J}, 
$\NN_{e_{-k}}(\eta^\eps)=0$ for every $|k|\geq\eps^{-1}$, so that 
$\{\hat \eta^\eps(k)\colon |k|\ge \eps^{-1}\}$, evolve like
independent (and $\eps$-independent) Ornstein-Uhlenbeck processes, which are clearly strong Markov 
and exist for all times. Instead, $\{\hat\eta^\eps_t(k)\colon |k|<\eps^{-1}\}$ forms a finite dimensional 
system of SDEs, which are strong Markov up to a possibly random explosion time $\tau$. 
To see that $\tau=\infty$, one can argue as in~\cite[Section 4]{GubinelliJara2012} (see also~\cite[Section 2.7]{GPNotes}). 
This amounts to apply It\^o's formula to the $L^2(\T^d)$ norm of $\{\hat\eta^\eps_t(k)\colon |k|<\eps^{-1}\}$ given by 
$A^\eps(t)\eqdef \sum_{|k|<\eps^{-1}}|\hat\eta^\eps_t(k)|^2$, and get 
\begin{equs}
\dd A^\eps(t)=&\Big(-\sum_{|k|\leq \eps^{-1}}|k|^2|\hat\eta^\eps_t(k)|^2+ \sum_{|k|\leq \eps^{-1}}\NN_{e_{-k}}(\eta^\eps_t) \hat\eta^\eps_t(-k)+C_\eps\Big)\dd t \\
&+ \sum_{|k|\leq \eps^{-1}}|k|\hat\eta^\eps_t(k)\dd B_{t}(k)
\end{equs}
where $C_\eps\eqdef \tfrac{1}{2}\sum_{|k|\leq \eps^{-1}}|k|^2\leq \eps^{-2-d}$. Now, the first term on the right hand side is non-positive, and, more importantly, the second vanishes since for any $\eta\in\cS'(\T^d)$
\begin{equs}[e:InvMeas]
\sum_{|k|\leq \eps^{-1}}\NN_{e_{-k}}(\eta) \hat\eta(-k)=\cN^\eps_\eta(\eta)&=\lambda_\eps\langle \fw\cdot\nabla(\Pi_{1/\eps}\eta)^2, (\Pi_{1/\eps}\eta)\rangle\\
&=\frac{\lambda_\eps}{3}\langle \fw\cdot\nabla(\Pi_{1/\eps}\eta)^3, 1\rangle=0\,.
\end{equs}
Therefore,
  \begin{equ}
    A^\eps(t)^2\lesssim A^\eps(0)^2+C_\eps^2 t^2+M^\eps(t)^2, \qquad M^\eps(t)\eqdef \int_0^t \sum_{|k|\leq \eps^{-1}}|k|\hat\eta^\eps_s(k)\dd B_{s}(k).
  \end{equ}
Moreover, $M^\eps_\cdot$ is a martingale so that, by definition of quadratic variation, we obtain
\begin{equs}
\Exp\Big[\sup_{t\in[0,T]}A^\eps(t)^2\Big]&\lesssim \E[A^\eps(0)^2]+ T^2 C_\eps^2 +\Exp\Big[\langle M^\eps(\cdot)\rangle_t\Big]\\
&= \E[A^\eps(0)^2]+ T^2 C_\eps^2 +\Exp\Big[\int_0^T\sum_{|k|\leq \eps^{-1}}|k|^2|\hat\eta^\eps_t(k)|^2\dd t\Big]\\
& \leq T^2 C_\eps^2 +\eps^{-2}\int_0^T\Exp\Big[\sup_{s\in[0,t]}A^\eps(s)\Big]\dd t
\\&\le T^2 C_\eps^2 +\eps^{-2}\int_0^T\Exp\Big[1+\sup_{s\in[0,t]}A^\eps(s)^2\Big]\dd t
 \,.
\end{equs}
An application of Gronwall's lemma then gives the result.
\medskip

\noindent{\it The generator.} The formulas~\eqref{e:gens} and~\eqref{e:gena} can be easily deduced 
by applying It\^o's formula to a cylinder random variable $F(\eta_t)=f(\eta_t(\phi_1),\dots,\eta_t(\phi_n))$, 
singling out the drift part and writing the resulting expressions in Fourier variables. 
A similar procedure can be found in~\cite[Section 2.1]{GPGen} (or, for $\gensy$, in~\cite[Section 2.3]{Gubinelli2016}) 
for the one-dimensional version of~\eqref{e:BKS}. 
\medskip

\noindent{\it Invariance and Symmetry.} According to Echeverr\'ia's criterion in~\cite{Ech}, invariance 
of $\P$ follows provided we can show that $\E[\gen F]=0$ for all cylinder $F$ which In turn is 
implied by the symmetry properties of $\gens$ and $\gena$. Indeed, 
Assume $\gensy$ is symmetric and $\gena$ skew-symmetric. Then, setting $\mathbf{1}$ to be the 
random variable constantly equal to $1$, we have 
\begin{equ}
\E[\gen F]=\E[\mathbf{1}\gen F]=\E[\mathbf{1}\gensy F]+\E[\mathbf{1}\gena F]=\E[F\gensy\mathbf{1}]-\E[F\gena\mathbf{1}]
\end{equ}
and since $\gensy\mathbf{1}=0=\gena\mathbf{1}$ as $D\mathbf{1}=0$, the claim follows. 
Therefore we are left to prove that $\gensy$ is symmetric and $\gena$ skew-symmetric. 
Let $F$ and $G$ be cylinder functions. By Gaussian Integration by parts~\eqref{eq:intbyparts}, we immediately have 
\begin{equ}[e:SymmL0]
\E[ D_{-k} D_{k} F(\eta) G(\eta)]=\E[D_{k} F(\eta) G(\eta) \eta_{-k}]-\E[D_{k} F(\eta) D_{-k} G(\eta)]
\end{equ}
from which we deduce
\begin{equ}
\E[\gensy F(\eta) G(\eta) ]
=
-\frac{1}{2} \sum_{k \in \Z^2} |k|^2 \E[D_{k}F(\eta) D_{-k} G(\eta)],
\end{equ}
and the latter is clearly symmetric. For $\gena$ instead, we apply once again~\eqref{eq:intbyparts} 
(see~\eqref{e:AntiSymm} for the case in which $G=1$)
and get 
\begin{equs}
\E [G \gena F]=&-\E[F\gena G]\\
&+ \frac{\iota\lambda_\eps}{(2
 \pi)^{d/2}} \sum_{m,\ell \in \Z_0^d}\indN{\ell,m} \fw\cdot (\ell+m) \E \Big[\hat\eta(-\ell-m)\hat\eta(m)\hat\eta(\ell) F(\eta)G(\eta)\Big]\\
 =& -\mathbb E [F \gena G] +\E\Big[ \cN^\eps_\eta(\eta)  F(\eta)G(\eta)\Big]=-\mathbb E [F \gena G] 
\end{equs}
where the last step follows by~\eqref{e:nonlin1}. Hence, the proof is completed. 
\end{proof}
\begin{remark}
  There is also an intuitive explanation as to why the law of the white noise is stationary for the stochastic Burgers equation. In fact, recall that for the Asymmetric Simple Exclusion Process, the product Bernoulli measure $\otimes_{x\in \mathbb Z^d}{\pi_x}(\rho)$ is stationary for any particle density $\rho$, and its  scaling limit  (after recentering) is just white noise.
\end{remark}
Next, we want to determine the action of $\gensy$ and $\gena$ on the Fock space $\fock$. 
To lighten notations, for a set of integers $I$, we will denote by $k_I$ 
the vector $(k_i)_{i\in I}$ and 
\begin{equ}[e:notations] 
|k_I|^2\eqdef \sum_{i\in I}|k_i|^2\qquad\text{and}\qquad k_{1:n}\eqdef k_{\{1:n\}}\,,
\end{equ}
where  $\{1:n\}\eqdef \{1,\dots,n\}$.

\begin{lemma}\label{lem:essid}
Let $\eps>0$, $\gensy$ and $\gena$ be the operators defined according to~\eqref{e:gens} and~\eqref{e:gena} 
respectively. Then, $\gena$ can be written as
  \begin{equ}[eq:split]
    \gena=\genap+\genam,\qquad\text{with}\qquad \genam=-(\genap)^*\,.
  \end{equ}
For any $n\in\N$, the action of the operators $\gensy,\genam,\genap$ on $f\in\fock_n$ 
is given by
\begin{equs}
\cF(\gensy f) (k_{1:n})=&-\tfrac12|k_{1:n}|^2\hat f(k_{1:n})\\
\cF(\genap f) (k_{1:n+1})=&-\frac{2\iota}{(2\pi)^{d/2}} \frac{\lambda_\eps}{n+1}\times\label{e:gen}\\
&\times\sum_{1\le i<j\le n+1} \,[\fw\cdot (k_i+k_j)]\indN{k_i,k_j}\hat f(k_i+k_j,k_{\{1:n+1\}\setminus\{i,j\}})\\
\cF(\genam  f) (k_{1:n-1})=&-\frac{2\iota}{(2\pi)^{d/2}} n\lambda_\eps\,\sum_{j=1}^{n-1}(\fw\cdot k_j) \sum_{\ell+m=k_j}\indN{\ell,m}\hat f(\ell,m,k_{\{1:n-1\}\setminus\{j\}})\,,
\end{equs}
and, if $n=1$, then $\genam f$ is identically $0$. Moreover, for
any $i=1,\dots,d$, the momentum operator $M_i$, defined for
$f\in\fock_n$ as
$\cF(M_if)(k_{1:n})=(\sum_{j=1}^n k^i_j)\hat f(k_{1:n})$, with $k^i_j$
the $i^{th}$ component of the vector $k_j$, commutes with
$\gensy, \genap$ and $\genam$, i.e. 
i.e. 
\begin{equ}[e:MomComm]
\big[\gen, M_i\big]\eqdef \gen M_i-M_i\gen=0\,,\qquad i=1,\dots,n\,.
\end{equ} 
\end{lemma}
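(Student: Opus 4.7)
The strategy is to compute the action of each operator on a convenient dense family of elements in $\fock_n$ by applying the definitions~\eqref{e:gens} and~\eqref{e:gena} directly, using the rules of Wiener--chaos calculus from Section~\ref{sec:WC}. The cleanest choice is to work on ``Fourier monomials'' $e_{-k_{1:n}}^{\otimes\sym} := \frac{1}{n!}\sum_{\sigma\in S_n}e_{-k_{\sigma(1)}}\otimes\cdots\otimes e_{-k_{\sigma(n)}}$ (with all $k_i\in\Z^d_0$), whose images under $I_n$ span a dense subset of $\SH_n$ and for which the Malliavin derivative acts in the elementary way recorded in~\eqref{e:MalliavinFourier}.

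For $\gensy$, applying~\eqref{e:gens} to $F=I_n(e_{-k_{1:n}}^{\otimes\sym})$, one recognises the standard Ornstein--Uhlenbeck number-operator computation: the off-diagonal terms produced by $\hat\eta(-k)D_k$ and $D_{-k}D_k$ cancel by Gaussian integration by parts~\eqref{eq:intbyparts}, leaving $\gensy F = -\tfrac12|k_{1:n}|^2 F$, which is exactly the first displayed formula. For $\gena$, the key observation is that the differential operator $D_{-m-\ell}$ decreases the chaos order by one while multiplication by $\hat\eta(m)\hat\eta(\ell)$ either increases the order by two (producing a contribution to $\SH_{n+1}$) or leaves it unchanged by one contraction (producing a contribution to $\SH_{n-1}$). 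The product formula for multiple stochastic integrals -- in the form $\hat\eta(m)\hat\eta(\ell)\, I_{n-1}(g)=I_{n+1}(\mathrm{sym}(e_{-m}\otimes e_{-\ell}\otimes g))+\text{single contractions}$ -- makes this decomposition precise, and defines $\genap$ (the chaos-raising part) and $\genam$ (the chaos-lowering part), so that $\gena=\genap+\genam$ as in~\eqref{eq:split}.

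The explicit kernel formulas then follow by bookkeeping. For $\genap$: in the $\SH_{n+1}$ contribution, the two Fourier labels $m,\ell$ and the $n-1$ remaining labels of $g$ must be symmetrised over $n+1$ positions, which produces the prefactor $1/(n+1)$ and the sum over $1\le i<j\le n+1$, while the sum over ordered pairs $(m,\ell)$ in~\eqref{e:gena} combined with the symmetry $\indN{\ell,m}=\indN{m,\ell}$ and $\fw\cdot(\ell+m)=\fw\cdot(k_i+k_j)$ yields the factor~$2$. For $\genam$: the Malliavin derivative $D_{-m-\ell}I_n(f)$ contributes a factor $n$, and the two single contractions of $\hat\eta(m),\hat\eta(\ell)$ with the kernel yield a sum over the position $j\in\{1,\dots,n-1\}$ with constraint $\ell+m=k_j$ coming from $\E[\hat\eta(m)\hat\eta(\ell)]=\1_{\ell=-m}$ composed with the identification $-m-\ell$ of the derivative argument; the overall combinatorics produces the factor $n$ and the stated sum. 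Tracking the precise prefactor $-2\iota/(n+1)$ and $-2\iota n$ (in particular, the minus sign that arises from combining $\iota$ with the integration-by-parts identity relating the shift by $-m-\ell$ to the shift by $k_j$) is the main routine but error-prone part of the argument.

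The identity $\genam=-(\genap)^*$ is then an immediate consequence of the skew-symmetry $\gena^*=-\gena$ proved in Lemma~\ref{lem:generalproperties}: orthogonality of distinct Wiener chaoses implies that for $F\in\SH_n$ and $G\in\SH_{n+1}$,
\begin{equation*}
\langle \genap F,G\rangle=\langle\gena F,G\rangle=-\langle F,\gena G\rangle=-\langle F,\genam G\rangle,
\end{equation*}
so $(\genap)^*=-\genam$ on each chaos. Finally, the commutation with the momentum operator $M_i$ is read off directly from the Fourier formulas just derived: $\gensy$ is diagonal and hence trivially commutes, $\genap$ replaces the pair $(k_i,k_j)$ by their sum $k_i+k_j$ so the total $i$-th component $\sum_j k_j^i$ is preserved, and $\genam$ splits $k_j$ into $(\ell,m)$ with $\ell+m=k_j$, again preserving the total momentum. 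Hence $[\gen,M_i]=0$, completing the proof.
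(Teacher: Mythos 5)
Your overall approach — compute on a dense family, use the Hermite/Wiener-It\^o product formula to split $\gena$ by chaos number, read off the kernels, and derive the adjoint relation and momentum commutation from them — is the same as the paper's (which evaluates on $f=\otimes^n\phi$ rather than Fourier monomials, but this is an inessential choice). However, there is a genuine gap in the central step. When you write the product formula as $\hat\eta(m)\hat\eta(\ell)\,I_{n-1}(g)=I_{n+1}(\mathrm{sym}(e_{-m}\otimes e_{-\ell}\otimes g))+\text{single contractions}$, you omit the double-contraction term, which lies in $\SH_{n-3}$ and is not zero for generic $m,\ell$. The correct Wiener product formula (or, equivalently, the Hermite product rule $H_2 H_{n-1}=H_{n+1}+2(n-1)H_{n-1}+(n-1)(n-2)H_{n-3}$) produces \emph{three} chaos components, $\SH_{n+1}$, $\SH_{n-1}$, and $\SH_{n-3}$, and the decomposition $\gena=\genap+\genam$ with only two pieces requires that the $\SH_{n-3}$ contribution vanishes after summing over $\ell,m$ with the weight $\fw\cdot(\ell+m)\,\indN{\ell,m}$. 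That vanishing is not automatic: it is the same cyclic cancellation underlying \eqref{e:InvMeas}, namely that $\sum_{a+b+c=0}(\fw\cdot a)\,\indN{b,c}\,\hat h(a,b,c,\ldots)=0$ for symmetric $\hat h$, because averaging over the cyclic permutations of $(a,b,c)$ produces the factor $\fw\cdot(a+b+c)=0$. Your proof as written would, if taken literally, assert a product formula that is false, and would silently produce a chaos-lowering-by-three piece with no home in the claimed decomposition.

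A second, more minor issue: for $\gensy$ you say the off-diagonal terms "cancel by Gaussian integration by parts." The mechanism is really again the product formula: $\hat\eta(-k)D_kF$ has an $\SH_n$ part and an $\SH_{n-2}$ part, and the latter coincides exactly with $D_{-k}D_kF$, so they cancel in the combination $-\hat\eta(-k)D_k+D_{-k}D_k$. This is the standard Ornstein--Uhlenbeck computation and your conclusion is right, but attributing it to integration by parts (which in this paper is a statement about expectations, not pointwise identities of random variables) conflates two different tools. Everything else — the counting that produces the prefactors $1/(n+1)$ and $n$, the derivation of $\genam=-(\genap)^*$ from skew-symmetry of $\gena$ and chaos orthogonality, and the momentum-conservation argument for $[\gen,M_i]=0$ — is correct and matches the paper.
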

\begin{proof}
Regarding~\eqref{e:MomComm}, this is immediate from~\eqref{e:gen} 
while the proof of the latter is identical to that of~\cite[Lemma 3.5]{CES}, 
to which we refer the reader interested in the details. 
In words, it consists of showing that~\eqref{e:gen} holds for elements $f\in\fock_n$ of the form 
$f=\otimes^n\phi$, the latter being the tensor product of 
$\phi$ with itself $n$ times, which is enough by the definition of $\fock_n$ in Section~\eqref{sec:WC}. 
To do so, one applies~\eqref{e:gens} and~\eqref{e:gena} to random variables $F=I_n(f)$, with $f$ as above, 
and singles out the resulting kernel. While for $\gensy$ this is quite straightforward,   
for $\gena$ one further needs the product rule for Hermite polynomials~\cite[Proposition 1.1.3]{Nualart2006}. 
The decomposition $\gena=\genap+\genam$ with $\genap$ (resp. $\genam$) 
increasing (resp. decreasing) the chaos by $1$ comes exactly from this: the Malliavin derivative decreases 
the chaos by $1$ so that $D_{-\ell-m}F\in\SH_{n-1}$; $D_{-\ell-m}F$ is then 
multiplied by an element in the second chaos, i.e. $\hat\eta(\ell)\hat\eta(m)$; in its chaos component 
the product is made of three summands, one in $\SH_{n+1}$, one in $\SH_{n-1}$ 
and one in $\SH_{n-3}$. As it turns out, the latter vanishes because of~\eqref{e:InvMeas}, 
while the others respectively produce $\genap$ and $\genam$. 
\end{proof}

Before moving on, let us briefly comment on the operators in~\eqref{e:gen} 
and provide an example of how they act on specific functions. 

$\genap$ and $\genam$ are referred to as the {\it creation} and {\it annihilation} operators respectively. 
The reason for this terminology is that if we feed $\gena_\pm$ with a random variable in the $n$-th Wiener chaos 
(resp. a function in $\fock_n$) 
they produce a random variable (resp. function) in the $(n+1)$-th chaos (resp. in $\fock_{n+1}$). 
More pictorially, if we interpret the Fourier mode of the space white noise, $\hat\eta(k)$, as a particle
with momentum $k$, $\genap$ destroys the particle $\hat\eta(k)$ and {\it creates} two new particles $\hat\eta(\ell)$, 
$\hat\eta(m)$ whose momenta sum up to $k$. To see this, let us apply $\gena$ to $\hat\eta(k)$. 
By the definition of $\gena$ in~\eqref{e:gena}, the fact that $\genam\restr\fock_1\equiv0$ 
and since $D_{-\ell-m}\hat\eta(k)=\1_{\ell+m=-k}$ (see~\eqref{e:MalliavinFourier}), we have 
\begin{equs}[e:ExampleA+]
\gena_+\hat\eta(k)&=\gena\hat\eta(k)=\frac\iota{(2
    \pi)^{d/2}}\lambda_\eps \sum_{m,\ell \in \Z_0^d} \indN{\ell,m} \fw\cdot (\ell+m) \hat \eta(m) \hat \eta(\ell) D_{-m-\ell} \hat\eta(k)\\
    &=-\frac\iota{(2
    \pi)^{d/2}}\lambda_\eps \sum_{\ell+m=k}\indN{\ell,m} \fw\cdot (\ell+m) \hat \eta(m) \hat \eta(\ell)=I_2(\genap e_k)\,.
\end{equs}
In the second line, we plastically see what was described before, namely the particle $\hat\eta(k)$ to which 
we applied $\gena$ has 
disappeared and was replaced by $\hat \eta(m)$ and $\hat \eta(\ell)$. 

The operator $\genam$ instead acts in the opposite way, it {\it annihilates} two particles with momenta 
$\ell$ and $m$, say, and replaces them by one whose momentum is given by $\ell+m$. 
As an example, for $k_1\neq-k_2$, $|k_1|,|k_2|\leq\eps^{-1}$ take $F=\hat\eta(k_1)\hat\eta(k_2)\in\SH_2$, whose kernel 
is $f=\tfrac12(e_{-k_1}\otimes e_{-k_2}+e_{-k_2}\otimes e_{-k_1})$. Then 
\begin{equs}
\cF(\genam f) (k)&=-\frac{2\iota}{(2\pi)^{d/2}} 2\lambda_\eps\,(\fw\cdot k) \sum_{\ell+m=k}\indN{\ell,m}\hat f(\ell,m)\\
&=-\frac{2\iota\lambda_\eps}{(2\pi)^{d/2}} (\fw\cdot k) \sum_{\ell+m=k}\indN{\ell,m}\Big(\1_{\ell=-k_1}\1_{m=-k_2}+\1_{\ell=-k_2}\1_{m=-k_1}\Big)\\
&=-\frac{4\iota\lambda_\eps}{(2\pi)^{d/2}} (\fw\cdot k)\1_{k=k_1+k_2}\label{e:ExampleA-}
\end{equs}
which means that 
\begin{equ}
\genam \hat\eta(k_1)\hat\eta(k_2)=\genam F=-\frac{4\iota\lambda_\eps}{(2\pi)^{d/2}}\hat\eta(k_1+k_2)\,.
\end{equ}

\begin{remark}\label{rem:A+A-}
The example~\eqref{e:ExampleA+} also shows why the operator $\genap$ is particularly singular. Indeed, 
as can be checked from~\eqref{e:nonlin1}, 
the second line corresponds to the $k$-th Fouerier mode of the nonlinearity and we have 
\begin{equ}
\E[(\NN_{e_{-k}}(\eta))^2]=\|\genap e_k\|^2=\lambda_\eps^2 \sum_{\ell+m=k}\indN{\ell,m} (\fw\cdot (\ell+m))^2\approx \lambda_\eps^2\eps^{-2-d}
\end{equ}
which, by~\eqref{eq:lambdaeps}, explodes as $\eps^{-4}$ in any dimension. 
In other words, even though the function we applied it to, i.e. $e_{k}$, is as smooth as it gets, 
$\genap$ gives back a function whose $L^2$ norm diverges as $\eps\to0$. The problem lies in 
the fact that we have no control over the momenta of the newly created particles, i.e. $k=\ell+m$ can be $O(1)$ 
but $\ell$ and $m$ can be arbitrarily large. 

On the other hand, when applied to a smooth function independent of $\eps$, $\genam$ is very well-behaved. 
To witness, if in the example~\ref{e:ExampleA-} $k_1$ and $k_2$ are fixed and independent of $\eps$, 
we have 
\begin{equ}[e:A-Smooth]
\|\genam F\|^2=\sum_k  \frac{16\lambda_\eps^2}{(2\pi)^{d}} (\fw\cdot k)^2\1_{k=k_1+k_2}=\frac{16\lambda_\eps^2}{(2\pi)^{d}}(\fw\cdot (k_1+k_2))^2
\end{equ}
which converges to $0$ as $\eps\to 0$. 
 \end{remark}

\subsection{Estimates on the antisymmetric part of the generator}

In this section we present an important estimate on $\gena$ and derive the 
so-called {\it graded sector condition} in the setting of the SBE. 
To introduce it and discuss its importance, let us define the {\it number operator}. 

\begin{definition}\label{def:NoOp}
We define the {\it number operator} $\cN\colon \fock\to\fock$ to be the linear operator acting on $\psi=(\psi_n)_n\in\fock$ 
as $(\cN\psi)_n\eqdef n \psi_n$. 
\end{definition}

The graded sector condition (GSC) is a condition on the antisymmetric part of the generator 
which originally appeared in~\cite{SVY} in the context of the asymmetric simple 
exclusion process. It requires that the growth in the number operator induced by $\genap$ 
is (strictly) sublinear, i.e.  
$ \| (-\gensy)^{-\half} \genap  (-\gensy)^{-\half} \|_{\fock \rightarrow \fock} \lesssim \cN^\beta $, 
for some $\beta\in[0,1)$. As we state in the next lemma, the previous bound holds in the  
context of SBE with $\beta=\half$. 

\begin{lemma}[Graded Sector Condition]\label{l:GenaBound}
For $d\ge2$ here exist a constant $C=C(d)>0$ such that for every $\psi\in\fock$ the following estimate holds
\begin{equ}\label{e:Abound}
\|(-\gensy)^{-\frac12}\gena_{\sigma} \psi\|\leq C \|\sqrt{\cN}(-\mathcal L_0)^\frac12 \psi\|\,,
\end{equ}
for $\sigma\in\{+,-\}$. 
In particular, for $\sigma\in\{+,-\}$, it  implies 
\begin{equ}\label{e:Tbound}
\|(-\gensy)^{-\frac12}\gena_{\sigma} (-\gensy)^{-\frac12} \psi\|\leq C \|\sqrt{\cN} \psi\|\,.
\end{equ}
\end{lemma}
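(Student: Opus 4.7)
The plan is to prove the bound for $\genap$ first by direct computation in Fourier variables, and then deduce the bound for $\genam$ from that of $\genap$ by duality (using $\genam = -(\genap)^*$ and the self-adjointness of $(-\gensy)^{-1/2}$). The second inequality \eqref{e:Tbound} then follows from \eqref{e:Abound} by substituting $\psi \to (-\gensy)^{-1/2}\psi$ and cancelling the two factors of $(-\gensy)^{1/2}$ inside the right-hand side.

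For the $\genap$ bound, I would work on $\psi \in \fock_n$ and use the explicit formula for $\widehat{\genap \psi}$ in Lemma~\ref{lem:essid}. The natural move is to write the Fourier kernel as $\widehat{\genap \psi} = \mathrm{Sym}(g)$ where $g(k_{1:n+1}) = c_n \lambda_\eps [\fw \cdot (k_1 + k_2)] \indN{k_1,k_2} \hat\psi(k_1 + k_2, k_{3:n+1})$, with $c_n$ absorbing the combinatorial factors in such a way that symmetrisation reproduces $\widehat{\genap\psi}$. Since the weight $|k_{1:n+1}|^{-1}$ associated with $(-\gensy)^{-1/2}$ is itself permutation-invariant, symmetrisation commutes with it and does not increase the $L^2$ norm, so
\begin{equation*}
\|(-\gensy)^{-1/2} \genap \psi\|^2 \leq 2 (n+1)! \int \frac{|g(k_{1:n+1})|^2}{|k_{1:n+1}|^2}\, dk_{1:n+1}.
\end{equation*}
Then I would change variables by setting $p = k_1 + k_2$, $q = k_1$, bound $|\fw \cdot p|^2 \leq |\fw|^2 |p|^2$, and extract the inner momentum sum $\sum_q \indN{q,p-q}/(|q|^2 + |p-q|^2 + |k_{3:n+1}|^2)$.

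The core arithmetic step, and the one doing all the real work, is the uniform-in-$\eps$ bound
\begin{equation*}
\sum_{q\in\Z^d_0} \frac{\indN{q, p-q}}{|q|^2 + |p-q|^2 + |k_{3:n+1}|^2} \lesssim \lambda_\eps^{-2}.
\end{equation*}
This is exactly where the dimension $d \geq 2$ and the precise choice of $\lambda_\eps$ in \eqref{eq:lambdaeps} come in: for $d = 2$ the sum is $\lesssim \log(1/\eps) \sim \lambda_\eps^{-2}$, while for $d \geq 3$ it is $\lesssim \eps^{-(d-2)} = \lambda_\eps^{-2}$. Inserting this bound cancels the $\lambda_\eps^2$ prefactor coming from the generator and leaves
\begin{equation*}
\|(-\gensy)^{-1/2}\genap \psi\|^2 \lesssim n (n+1)! \int |p|^2 |\hat\psi(p, k_{3:n+1})|^2\,dp\,dk_{3:n+1},
\end{equation*}
which, after renaming and using the symmetry of $\hat\psi$ to replace $|k_1|^2$ by $\tfrac{1}{n}|k_{1:n}|^2$, is comparable to $n \cdot \|(-\gensy)^{1/2}\psi\|^2 = \|\sqrt{\cN}(-\gensy)^{1/2}\psi\|^2$.

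For the $\genam$ bound, testing against $\phi \in \fock_{n-1}$ and using $\genam = -(\genap)^*$ gives $\langle \phi, (-\gensy)^{-1/2}\genam \psi\rangle = -\langle (-\gensy)^{-1/2}\genap (-\gensy)^{-1/2}\phi, (-\gensy)^{1/2}\psi\rangle$. By the bound just proved (applied on $\fock_{n-1}$) and Cauchy–Schwarz, the right-hand side is $\lesssim \sqrt{n-1}\,\|\phi\|\,\|(-\gensy)^{1/2}\psi\|$, and taking the supremum over $\|\phi\|=1$ yields $\|(-\gensy)^{-1/2}\genam \psi\|^2 \lesssim n\,\|(-\gensy)^{1/2}\psi\|^2 = \|\sqrt\cN (-\gensy)^{1/2}\psi\|^2$. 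The main technical obstacle of the whole argument is thus purely the momentum-sum estimate above; the rest is symmetrisation, change of variables, and a duality argument.
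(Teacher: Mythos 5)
Your overall architecture---direct estimate for $\genap$, duality for $\genam$, deduce \eqref{e:Tbound} by substitution---is sound, and you have correctly identified the single arithmetic fact doing all the work, namely
$$\sum_{q}\frac{\indN{q,p-q}}{|q|^2+|p-q|^2+\cdots}\lesssim\lambda_\eps^{-2},$$
which matches the paper's \eqref{e:CrucialBound2}--\eqref{e:CrucialBound3+}. Your duality step for $\genam$ is also the same idea as the paper's variational formula. However, the way you bound $\|(-\gensy)^{-1/2}\genap\psi\|$ loses a factor of $\sqrt{\cN}$, and that factor is precisely the content of the lemma.

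The problem is the step $\|\mathrm{Sym}(g)\|_{L^2}\le\|g\|_{L^2}$. For this to reproduce $\widehat{\genap\psi}$ you need the normalisation $|c_n|\asymp n$: there are $\binom{n+1}{2}$ pairs $(i,j)$ in \eqref{e:gen} against a $1/(n+1)$ prefactor, so $c_n=\binom{n+1}{2}/(n+1)\cdot\text{const}\asymp n$. Carrying $c_n^2\asymp n^2$ through your chain of inequalities, then trading $|k_1|^2\mapsto\tfrac1n|k_{1:n}|^2$ by symmetry, produces
$$\|(-\gensy)^{-1/2}\genap\psi\|^2\lesssim n^2\,\|(-\gensy)^{1/2}\psi\|^2=\|\cN(-\gensy)^{1/2}\psi\|^2,$$
not $\|\sqrt{\cN}(-\gensy)^{1/2}\psi\|^2$ (you write ``$n(n+1)!$'' where the computation gives $n^2(n+1)!$). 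The symmetrisation inequality is a projection bound and is genuinely lossy here by up to a factor $\binom{n+1}{2}$; it amounts to applying Cauchy--Schwarz across all $\binom{n+1}{2}^2$ cross terms in $\|\sum_{i<j}h_{ij}\|^2$ without exploiting any cancellation. This is not a cosmetic issue: the lemma is named \emph{graded sector condition} precisely because the exponent on $\cN$ must be strictly less than $1$; the $\sqrt{j}$ (rather than $j$) that it provides is used essentially in the summation-by-parts step \eqref{e:Gammaaa}--\eqref{eq:freddo2} of Proposition~\ref{P:AprioriGeneratorEq}, where $1-\tfrac{C}{\sqrt j}\to1$ is needed to close the estimate (with a linear bound one would instead need a smallness condition on $C$ that is not available).

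The paper avoids the loss by not going through a squared $L^2$ norm at all. It proves the bilinear estimate \eqref{e:AprioriFirst}, $|\langle\rho,\genap\psi\rangle|\lesssim\sqrt n\,(\gamma\|(-\gensy)^{1/2}\psi\|^2+\gamma^{-1}\|(-\gensy)^{1/2}\rho\|^2)$ for arbitrary $\gamma>0$, and then reads off \eqref{e:Abound} from the variational characterisation $\|(-\gensy)^{-1/2}\genap\psi\|^2=\sup_\rho(2\langle\rho,\genap\psi\rangle-\|(-\gensy)^{1/2}\rho\|^2)$. The structural gain is that $\hat\rho$ is already symmetric, so the sum over $1\le i<j\le n+1$ in $\langle\rho,\genap\psi\rangle$ collapses to a single term multiplied by $\binom{n+1}{2}/(n+1)\asymp n$ \emph{exactly}, with no Cauchy--Schwarz across pairs. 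One then does a single Cauchy--Schwarz on the $(k_1,k_2)$ sum (which is where your momentum-sum estimate enters, exactly as you propose), exploits the symmetry of \emph{both} $\hat\psi$ and $\hat\rho$ to trade $|k_1|^2\mapsto |k_{1:n}|^2/n$ and $|k_{1:2}|^2\mapsto |k_{1:n+1}|^2/(n+1)$, and finally uses Young's inequality with the specific split $AB\le \tfrac{\gamma}{\sqrt n}A^2+\tfrac{\sqrt n}{\gamma}B^2$ to balance the two sides into $\sqrt n$. So the missing ingredient in your proposal is to pass to the bilinear form in $\rho$, where the symmetry of the second argument is free, rather than to estimate the $L^2$-norm of a symmetrised kernel directly.
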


As shown in~\cite[Section 2.7.4]{Komorowski2012}, the GSC is 
very powerful in that it {\it generally} implies a central limit theorem for additive functionals of the (non-reversible) 
Markov processes for which it holds. That said, our setting is very different and 
the proof of the implication in the above-mentioned reference fails. 
Indeed, in all the works which have exploited the GSC thus far, the operator was {\it scale-independent} (i.e. 
there was no explicit dependence on $\eps$) and the stationary measure 
was either discrete and in product form or supported on smooth functions. 
This is not the case here and this will have important repercussions on our 
analysis. %, see Remark~\ref{rem:MoreGen} for a more detailed account. 

Before proceeding, let us prove Lemma~\ref{l:GenaBound}.

\begin{proof}
Clearly, once we establish~\eqref{e:Abound},~\eqref{e:Tbound} follows immediately 
by choosing $\psi$  to be $(-\gensy)^{-\half}\rho$ for $\rho\in\fock$.

For~\eqref{e:Abound}, we claim that we only need to prove that for any $\psi\in\fock_n$ and $\rho\in\fock_{n+1}$, we have
\begin{equ}[e:AprioriFirst]
|\langle\rho,\genap \psi\rangle|\lesssim \sqrt{n}\Big(\gamma\|(-\gensy)^{\half}\psi\|^2+\frac{1}{\gamma} \|(-\gensy)^{\half}\rho\|^2\Big)\,.
\end{equ}
We will show~\eqref{e:AprioriFirst} at the end. 
Assuming it holds, we first derive~\eqref{e:Abound} for $\genap$. 
The variational characterisation of the $(-\gensy)^{-\half}\fock$-norm gives 
\begin{equs}
\|&(-\gensy)^{-\half}\genap\psi\|^2=\sup_{\rho\in \fock_{n+1}}\left(2\langle \rho,\genap  \psi\rangle-
    \| (-\cL_0)^{\frac12}\rho\|^2\right)\\
&\leq \sup_{\rho\in \fock_{n+1}}\left(2C_0\sqrt{n}\Big(\gamma \|(-\gensy)^{\half}\psi\|^2 + \frac{1}{\gamma}\|(-\gensy)^{\half}\rho\|^2\Big)-
    \| (-\cL_0)^{\frac12}\rho\|^2\right)\\
    &\lesssim n\|(-\gensy)^{\half}\psi\|^2=\|\sqrt{\cN} (-\gensy)^{\half}\psi\|^2
\end{equs}
where in the first step we used~\eqref{e:AprioriFirst} (and $C_0$ is the universal 
constant implicit in that inequality) while in the last we chose $\gamma\eqdef 2C_0\sqrt{n}$.

For $\genam$ instead, we use that, by~\eqref{eq:split}, $\genam=-(\genap)^\ast$. Invoking
again the variational formula above, we deduce
\begin{equs}
\|&(-\gensy)^{-\half}\genam\psi\|^2=\sup_{\rho\in \fock_{n-1}}\left(2\langle \rho,\genam  \psi\rangle-
    \| (-\cL_0)^{\frac12}\rho\|^2\right)\\
 &=\sup_{\rho\in \fock_{n-1}}\left(-2\langle \genap\rho, \psi\rangle-
    \| (-\cL_0)^{\frac12}\rho\|^2\right)\\
&\leq \sup_{\rho\in \fock_{n-1}}\left(2C_0\sqrt{n}\Big(\gamma \|(-\gensy)^{\half}\rho\|^2 + \frac{1}{\gamma}\|(-\gensy)^{\half}\psi\|^2\Big)-
    \| (-\cL_0)^{\frac12}\rho\|^2\right)\\
    &\lesssim n\|(-\gensy)^{\half}\psi\|^2=\|\sqrt{\cN} (-\gensy)^{\half}\psi\|^2
\end{equs}
where this time we chose $\gamma\eqdef 1/(2C_0\sqrt n)$.  
\medskip

It remains to prove~\eqref{e:AprioriFirst}. By definition of $\genap$, we have
\begin{equs}
|\langle\rho,\genap \psi\rangle|=&n!\frac{2}{(2\pi)^{\frac{d}2}} \lambda_\eps\Big|\sum_{k_{1:n+1}}\hat\rho(-k_{1:n+1})\times\\
&\times\sum_{1\le i<j\le n+1} [\fw\cdot (k_i+k_j)]\indN{k_i,k_j}\hat \psi(k_i+k_j,k_{\{1:n+1\}\setminus\{i,j\}})\Big|%(\ell_{1:p+1}, k_{1:n+2j\setminus\{i_1:i_{p+1}\}})\Big|\,.
\\
=&\frac{(n+1)! n}{(2\pi)^{\frac{d}2}}  \lambda_\eps\Big|\sum_{k_{1:n+1}}\hat\rho(-k_{1:n+1})[\fw\cdot(k_1+k_2)]\indN{k_1,k_2} \hat\psi(k_1+k_2, k_{3:n+1})\Big|\,.
\end{equs}
Let us look at the sum over $k_1,k_2$. This equals
\begin{equs}
&\Big|\sum_{k_{1:2}}\hat\rho(-k_{1:n+1})[\fw\cdot (k_1+k_2)]\indN{k_1,k_2} \hat\psi(k_1+k_2, k_{3:n+1})\Big|\\
&=\Big|\sum_{q} (\fw\cdot q)\hat\psi(q, k_{3:n+1})\sum_{k_1+k_2=q}\indN{k_1,k_2} \hat\rho(-k_{1:n+1})\Big|\\
&\lesssim \Big(\sum_q (\fw\cdot q)^2|\hat\psi(q, k_{3:n+1})|^2\Big)^{\half}\Big(\sum_q \Big|\sum_{k_1+k_2=q}\indN{k_1,k_2} \hat\rho(-k_{1:n+1})\Big|^2\Big)^{\half}\\
&=:A\times\one\,.\label{e:Adef}
\end{equs}
We leave $A$ as it stands and focus on $\one$. Note that this can be bounded by 
\begin{equs}
\one&=\Big(\sum_q \Big|\sum_{k_1+k_2=q}\indN{k_1,k_2} \hat\rho(-k_{1:n+1})\Big|^2\Big)^{\half}\\
&\leq \Big(\sum_q \sum_{k_1+k_2=q}|k_{1:2}|^2|\hat\rho(-k_{1:n+1})|^2\sum_{k_1+k_2=q}\frac{\indN{k_1,k_2}}{|k_{1:2}|^2}\Big)^{\half}\\
&\lesssim \lambda_\eps^{-1} \Big(\sum_q \sum_{k_1+k_2=q}|k_{1:2}|^2|\hat\rho(k_{1:n+1})|^2\Big)^{\half}=:\lambda_\eps^{-1} B\label{e:Bdef}
\end{equs}
where we used that for $d=2$, $\lambda_\eps^2=(\log\eps^{-2})^{-1}$ and 
\begin{equ}[e:CrucialBound2]
\lambda_\eps^2\sum_{k_1+k_2=q}\frac{\indN{k_1,k_2}}{|k_{1:2}|^2}\leq \frac{\eps^2}{\log\eps^{-2}}\sum_{\eps<|\eps k_1|\leq 1}\frac{1}{|\eps k_{1}|^2}\lesssim  \frac{1}{\log\eps^{-2}}\int_{|x|\in[\eps,1]}\frac{\dd x}{|x|^2}\lesssim 1
\end{equ}
while for $d\geq 3$, $\lambda_\eps^2=\eps^{d-2}$ and 
\begin{equs}[e:CrucialBound3+]
\eps^{d-2}\sum_{k_1+k_2=q}\frac{\indN{k_1,k_2}}{|k_{1:2}|^2}&\leq\eps^{d}\sum_{|\eps k_1|\leq 1}\frac{1}{|\eps k_{1}|^2}\lesssim \int_{|x|\leq 1}\frac{\dd x}{|x|^2}\lesssim 1\,.
\end{equs}
As a consequence, for any $\gamma>0$, we have 
\begin{equs}
|\langle\rho,\genap \psi\rangle|&\lesssim (n+1)! n\sum_{k_{3:n+1}}AB\leq (n+1)! n\Big(\frac{\gamma}{\sqrt n} \sum_{k_{3:n+1}} A^2 +\frac{\sqrt n}{\gamma}\sum_{k_{3:n+1}} B^2\Big)\\
&\leq (n+1)! n\Big(\frac{1}{n!}\frac{\gamma}{n^{3/2}}\|(-\gensy)^{\half}\psi\|^2 +\frac{1}{(n+1)!}\frac{n^{-\half}}{\gamma}\|(-\gensy)^{\half}\rho\|^2\Big)\\
&\leq \sqrt n\gamma\|(-\gensy)^{\half}\psi\|^2 +\frac{\sqrt n}{\gamma}\|(-\gensy)^{\half}\rho\|^2\label{e:AlmostDoneBound}
\end{equs}
where we used that, by the definition of $A$ and $B$ in~\eqref{e:Adef} and~\eqref{e:Bdef} respectively, 
we have 
\begin{equ}
\sum_{k_{3:n+1}} A^2\lesssim\sum_{k_{1:n}}  |k_1|^2|\hat\psi(k_{1:n})|^2\leq\frac{n^{-1}}{n!} \|(-\gensy)^{\half}\psi\|^2
\end{equ}
and similarly for $B$. Then,~\eqref{e:AprioriFirst} follows. 
\end{proof}

\subsection{The It\^o trick}\label{sec:Ito}

Another crucial tool in our analysis is the so-called It\^o trick, which first appeared in~\cite{GubinelliJara2012} and 
has later been used in various forms in several different contexts (see e.g.~\cite{GubinelliJara2012,DGP, GT, CES,CG}). 
It represents a refined version of the celebrated Kipnis-Varadhan lemma (see~\cite[Lemma 2.4]{Komorowski2012}), 
as it allows to estimate arbitrary moments of additive functionals of Markov processes. 
Let us recall its statement. 

\begin{lemma}[It\^o trick]\label{l:Ito}
Let $d\geq 2$, $\eta^\eps$ be the stationary solution to \eqref{e:BurgersScaled} with $\lambda_\eps$ 
given as in~\eqref{eq:lambdaeps}. For any
$p \ge 2$, $T > 0$ and $F\in L^2(\Omega)$ with finite chaos expansion, i.e. $F\in\bigoplus_{j=1}^n\cH_j$ for some $n\in\N$, 
there exists a constant $C=C(p,n)>0$ such that
  \begin{equ}[e:Itotrick]
    \mathbf E\left[\sup_{t\in[0,T]}\left|\int_0^t F(\eta^\eps_s)
      \dd s
      \right|^p
      \right]^{1/p}\leq C T^{\half}\|(-\gensy)^{-\half}F\|\,.
  \end{equ}
For $p=2$ the constant $C$ is independent of $n$. 
\end{lemma}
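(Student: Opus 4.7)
The plan is to rely on the classical forward-backward martingale decomposition adapted to the non-reversible setting, combined with carré-du-champ/BDG estimates and Gaussian hypercontractivity to pass from $p=2$ to general $p\ge 2$.

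\textbf{Step 1: Setup.} Since $F\in\bigoplus_{j=1}^n\cH_j$ has mean zero and lives in finitely many chaoses, and since $\gensy$ acts on each chaos $\fock_m$ ($m\ge 1$) as multiplication by $-\tfrac12|k_{1:m}|^2<0$ in Fourier, the function $g\eqdef(-\gensy)^{-1}F$ is well-defined, lies in the same chaoses as $F$, and satisfies $-\gensy g=F$. In particular, $\|(-\gensy)^{\half}g\|=\|(-\gensy)^{-\half}F\|$.

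\textbf{Step 2: Forward and backward martingale decompositions.} By Dynkin's formula applied to $g(\eta^\eps_t)$,
\begin{equ}
M^+_t\eqdef g(\eta^\eps_t)-g(\eta^\eps_0)-\int_0^t \gen g(\eta^\eps_s)\dd s
\end{equ}
is a martingale with respect to the forward filtration. Because $\P$ is invariant and $\gena$ is antisymmetric (Lemma \ref{lem:generalproperties}), the time-reversed process $\hat\eta^\eps_s\eqdef\eta^\eps_{t-s}$ on $[0,t]$ is Markov with generator $\gensy-\gena$; another application of Dynkin gives a backward martingale $\tilde M^-_t$ with
\begin{equ}
\tilde M^-_t=g(\eta^\eps_0)-g(\eta^\eps_t)-\int_0^t(\gensy-\gena)g(\eta^\eps_s)\dd s.
\end{equ}
Summing the two identities the $\gena$ term cancels and the boundary terms annihilate, producing
\begin{equ}
2\int_0^t F(\eta^\eps_s)\dd s=-2\int_0^t\gensy g(\eta^\eps_s)\dd s=M^+_t+\tilde M^-_t.
\end{equ}

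\textbf{Step 3: Carré du champ identification and BDG.} A direct computation from \eqref{e:gens} gives the carré du champ
\begin{equ}
\Gamma(g,g)\eqdef\gen(g^2)-2g\,\gen g=\sum_{k\in\Z^d_0}|k|^2 D_{-k}g\, D_k g,
\end{equ}
where the first-order operator $\gena$ drops out, and taking expectation under $\P$ recovers $\E[\Gamma(g,g)]=2\|(-\gensy)^{\half}g\|^2=2\|(-\gensy)^{-\half}F\|^2$. The predictable quadratic variation of $M^+$ equals $\int_0^t\Gamma(g,g)(\eta^\eps_s)\dd s$, and the analogous statement holds for $\tilde M^-$ (the adjoint generator has the same carré du champ). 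Applying the Burkholder--Davis--Gundy inequality yields, for $p=2$,
\begin{equ}
\E\Big[\sup_{t\in[0,T]}|M^+_t|^2+\sup_{t\in[0,T]}|\tilde M^-_t|^2\Big]\lesssim T\,\|(-\gensy)^{-\half}F\|^2,
\end{equ}
and \eqref{e:Itotrick} for $p=2$ (with constant independent of $n$) follows immediately.

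\textbf{Step 4: Extension to $p>2$ via hypercontractivity.} For $p>2$, BDG gives $\E[\sup_{t\le T}|M^+_t|^p]\lesssim_p \E[\langle M^+\rangle_T^{p/2}]$. By Jensen in time and stationarity,
\begin{equ}
\E\Big[\langle M^+\rangle_T^{p/2}\Big]\le T^{p/2}\E\big[\Gamma(g,g)^{p/2}\big].
\end{equ}
Since $g$ has a chaos expansion supported in $\bigoplus_{j=1}^n\cH_j$, the random variable $\Gamma(g,g)$ is a polynomial in finitely many chaoses of order at most $2n-2$; Gaussian hypercontractivity \eqref{e:GaussHyper} then gives
\begin{equ}
\E\big[\Gamma(g,g)^{p/2}\big]^{2/p}\le C(p,n)\,\E[\Gamma(g,g)]=2C(p,n)\,\|(-\gensy)^{-\half}F\|^2,
\end{equ}
and the same bound holds for $\tilde M^-$. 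Combining, the estimate \eqref{e:Itotrick} follows for every $p\ge 2$. The main technical point is the clean implementation of the time-reversal (which requires stationarity of $\P$ under the full dynamics, already established in Lemma \ref{lem:generalproperties}), rather than any delicate estimate; the passage $p=2\to p$ is essentially automatic once one observes that $\Gamma(g,g)$ sits in finite chaos.
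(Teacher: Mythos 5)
Your proposal is correct and follows essentially the same route as the paper: set $g=(-\gensy)^{-1}F$, run the forward--backward (Lyons--Zheng / Kipnis--Varadhan) martingale decomposition so that the antisymmetric part cancels, apply BDG to the resulting martingales, and control the $p$-th moment of the quadratic variation via stationarity plus Gaussian hypercontractivity, identifying $\E[\Gamma(g,g)]$ with $\|(-\gensy)^{-\half}F\|^2$ up to a harmless constant. The only cosmetic difference is that you phrase the quadratic-variation computation via the carré du champ operator, whereas the paper writes out $\cE(G)=\sum_k|k|^2|D_kG|^2$ directly; these are the same object.
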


Notice that, while the expectation at the left hand side of~\eqref{e:Itotrick} is with respect to 
the law of the whole process $(\eta^\eps_t)_{t\geq 0}$, the quantity at the right hand side 
is an expectation only with respect to the law of $\eta^\eps$ at a fixed time. In other words, 
the It\^o trick allows to reduce the problem of controlling complicated functionals 
of $\eta^\eps$ at multiple times to equilibrium estimates which are generally easier to obtain. 
This will be crucial not only in obtaining tightness for $\eta^\eps$ but also 
to identify the limit point.

\begin{proof}
Let $F$ be as in the statement and $G$ to be the solution of $-\gensy G=F$. Notice that, since $F$ has no component 
in $\SH_0$ and has finite chaos expansion, such $G$ exists, is unique and lives in $\bigoplus_{j=1}^n\cH_j$. 
Its explicit expression can be found 
by exploiting the form of $\gensy$ in~\eqref{e:gen}. 

Thanks to It\^o's formula, we can write
\begin{equ}[e:Ito]
G(\eta^\eps_t) - G(\eta^\eps_0)=
\int_0^t\, \left(\gensy+\gena\right)G \,(\eta^\eps_s)\, \dd s + M_t(G),
\end{equ}
where $M_\cdot (G)$ is the martingale whose quadratic variation is
\begin{equ}[e:energy]
\dd\qvar{M (F)}_t=\mathcal{E}(G)(\eta^\eps_t)\dd t\eqdef \sum_{k\in\Z^d_0}|k|^2 |D_k G(\eta^\eps_t)|^2\, \dd t\,.
\end{equ}   
For fixed $T>0$, the backward process $\bar{\eta}^\eps_t \eqdef \eta^\eps_{T-t}$ 
is itself Markov and its generator is given by the adjoint of $\gen$, i.e. 
$(\gen)^\ast=\gensy-\gena$. In particular, 
applying again It\^o's formula, but this time to $G(\bar{\eta}^\eps_t)$, we get
\begin{equ}[e:Ito:backward]
G(\bar{\eta}^\eps_T) - G(\bar{\eta}^\eps_{T-t})=
\int_{T-t}^T\, \left(\gensy-\gena\right)G \,(\bar{\eta}^\eps_s)\, \dd s + \bar M_T(G)-\bar M_{T-t}(G)\,,
\end{equ}
where $\bar M(G)$ is a martingale with respect to the {\it backward} filtration, generated by the process 
$\bar{\eta}^\eps$, 
and its  quadratic variation is that in~\eqref{e:energy} but with $\bar{\eta}^\eps$ replacing $\eta^\eps$. 
By a simple change of variables, and using the fact that $\bar{\eta}^\eps$ is the time-reversed 
$\eta^\eps$, we see that~\eqref{e:Ito:backward} becomes
\begin{equ}[e:ito2]
G(\eta^\eps_0) - G(\eta^\eps_t)=
\int_{0}^t\, \left(\gensy-\gena\right)G \,(\eta^\eps_s)\, \dd s + \bar M_T(G)-\bar M_{T-t}(G)\,.
\end{equ}
Hence, adding up~\eqref{e:Ito} and~\eqref{e:ito2}, we obtain 
\begin{equ}
2\int_{0}^t\, \gensy G \,(\eta^\eps_s)\, \dd s = - M_t(G)-\bar M_T(G)+\bar M_{T-t}(G)\,.
\end{equ}
Therefore, applying Burkholder-Davis-Gundy inequality to the right hand side of the previous, 
we immediately obtain that there exists an absolute 
constant $C>0$ (which might change from line to line) such that
\begin{equ}
\mathbf{E} \left[ \sup_{t \leq T} \Big| \int_0^t F(\eta^\eps_s) \dd s \Big|^p \right]^{\frac1p} =\mathbf{E} \left[ \sup_{t \leq T} \Big| \int_0^t \gensy G(\eta^\eps_s) \dd s \Big|^p \right]^{\frac1p} \leq C \mathbf{E} \left[\qvar{M (G)}_T^{\frac{p}{2}}\right]^{\frac1p}\,.
\end{equ}
For the last term, we use~\eqref{e:energy} so that 
\begin{equ}
\mathbf{E} \left[\qvar{M (G)}_T^{\frac{p}{2}}\right]^{\frac1p}=\mathbf{E} \left[\Big(\int_0^T\mathcal{E}(G)(\eta^\eps_t)\dd t\Big)^{\frac{p}{2}}\right]^{\frac1p}\leq \Big(\int_0^T\mathbf{E}\Big[|\mathcal{E}(G)(\eta^\eps_t)|^{\frac{p}{2}}\Big]^{\frac{2}{p}}\dd t\Big)^{\half}\,.
\end{equ}
Notice that the argument of the expectation depends only on $\eta^\eps$ at a given time, so that 
since $\eta^\eps$ is stationary we get 
\begin{equs}
\Big(\int_0^T\mathbf{E}\Big[|\mathcal{E}(G)(\eta^\eps_t)|^{\frac{p}{2}}\Big]^{\frac{2}{p}}\dd t\Big)^{\half}=T^\half\E\Big[|\mathcal{E}(G)|^{\frac{p}{2}}\Big]^{\frac{1}{p}}\lesssim T^\half\E\Big[\mathcal{E}(G)\Big]^{\frac{1}{2}}
\end{equs}
where in the last step we used Gaussian hypercontractivity~\eqref{e:GaussHyper} and therefore the 
hidden constant depends on both $n$ and $p$, unless $p=2$. 

To conclude, we are left to analyse the expectation of $\mathcal{E}(G)$. For this, notice that, 
by taking $F=G$ in~\eqref{e:SymmL0} and using that, for $G$ real-valued, $D_{-k}G=\overline{D_{k}G}$, 
we get 
\begin{equ}
\E[G(\eta) D_{-k} D_{k} G(\eta) ]=\E[ G(\eta) \hat\eta(-k)D_{k} G(\eta)]-\E\left[|D_{k} G(\eta)|^2 \right]
\end{equ}
so that, by~\eqref{e:gens}, we deduce 
\begin{equs}
\|(-\gensy)^{-\half} F\|^2&=\|(-\gensy)^\half G\|^2=\E\left[G(\eta) (-\gensy)G(\eta)\right]\\
&=\frac12\sum_k |k|^2 \E\left[G(\eta) \left(\eta_{-k}D_{k} G(\eta)-D_{-k} D_{k} G(\eta)\right)\right]\\
&=\frac12\sum_k |k|^2\E\left[|D_{k} G(\eta)|^2 \right]=\E\Big[\mathcal{E}(G)\Big]\,,
\end{equs}
and the proof is completed. 
\end{proof}

\section{The approach}

\label{sec:approach}
As the title suggests, in this section we present the approach we follow in order to establish Theorem~\ref{thm:main}. 
It consists of three steps
\begin{enumerate}[noitemsep]
\item show tightness for the sequence $\eta^\eps$ (Section~\ref{sec:Tightness}),
\item derive a suitable characterisation of the law of the conjectured limit, 
  i.e. the solution of~\eqref{e:SHEintro} (Section~\ref{sec:Limit}),
  \item prove that the law of every limit point of $\eta^\eps$ is that identified in the previous point. 
\end{enumerate}
While the proof of the first item follows by an easy application of the It\^o trick (Lemma~\ref{l:Ito}) 
and the second is  classical, the third point is delicate and requires a refined analysis. 
This is where the novelty of our approach lies, so we will motivate it carefully providing 
several heuristics that, we hope, will help the reader navigating the details. 

\subsection{Tightness} \label{sec:Tightness}

As mentioned above, our first goal is to show that the sequence $\eta^\eps$ is tight in the (weakest) space 
in which we expect the limit to live, i.e. the space of continuous functions in time with values in the space 
of distributions.   

\begin{theorem}\label{thm:Tight}
The sequence $\{\eta^\eps\}_\eps$ is tight in $C([0,T],\cS'(\T^d))$. 
\end{theorem}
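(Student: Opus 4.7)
The plan is to apply Mitoma's tightness criterion, which reduces proving tightness in $C([0,T],\cS'(\T^d))$ to establishing that, for every fixed test function $\phi\in\cS(\T^d)$ with zero spatial mean, the real-valued family $\{t\mapsto \eta^\eps_t(\phi)\}_\eps$ is tight in $C([0,T],\R)$. Tightness of the initial datum $\eta^\eps_0=\mu$ is automatic since it is $\eps$-independent. For each $\phi$, the main input is the uniform-in-$\eps$ moment bound
\begin{equ}
\mathbf E\big[|\eta^\eps_t(\phi)-\eta^\eps_s(\phi)|^p\big]^{1/p} \lesssim_{p,T} (t-s)^{1/2}\,\|\phi\|_{H^1}\,,\qquad 0\le s\le t\le T\,,
\end{equ}
valid for every $p\ge 2$, after which Kolmogorov's continuity criterion (applied with $p$ large) yields uniform H\"older estimates and hence the required tightness.

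To establish the moment bound, I would use the weak formulation \eqref{e:SPDEweak} on $[s,t]$ to decompose the increment into a linear drift, a nonlinear drift and a martingale increment, and control each in $L^p(\mathbf P)$. The martingale part $M_t(\phi)-M_s(\phi)$ is immediate: by \eqref{eq:STWN} its quadratic variation equals $(t-s)\|\phi\|_{H^1}^2$, so Burkholder-Davis-Gundy delivers the required bound. For the linear drift $\tfrac12\int_s^t\eta^\eps_r(\Delta\phi)\dd r$, the It\^o trick (Lemma \ref{l:Ito}) applied to $F=I_1(\Delta\phi)\in\SH_1$, combined with stationarity, works: Lemma \ref{lem:essid} gives that $\gensy$ acts on $\fock_1$ as multiplication by $-|k|^2/2$ in Fourier, so $\|(-\gensy)^{-1/2}I_1(\Delta\phi)\|=\sqrt 2\,\|\phi\|_{H^1}$ and Lemma \ref{l:Ito} produces the $(t-s)^{1/2}\|\phi\|_{H^1}$ bound.

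The main step is the nonlinear drift $\int_s^t \cN^\eps_\phi(\eta^\eps_r)\dd r$, whose integrand lives in the second chaos. The clean observation I would exploit is that, in the identification of Remark \ref{rem:NotAbuse}, $\cN^\eps_\phi = \genap\phi$ with $\phi$ viewed as the first-chaos element $I_1(\phi)\in\SH_1$. This is a direct consequence of the computation \eqref{e:ExampleA+}, which gives $\gena \hat\eta(k) = I_2(\genap e_{-k})$, summed against $\hat\phi(-k)$ and using the Fourier expansion \eqref{e:FourierRep}. The It\^o trick together with the stationarity of $\eta^\eps$ on the shifted interval then yield
\begin{equ}
\mathbf E\Big[\Big|\int_s^t \cN^\eps_\phi(\eta^\eps_r)\dd r\Big|^p\Big]^{1/p} \lesssim_p (t-s)^{1/2}\,\|(-\gensy)^{-1/2}\genap\phi\|\,,
\end{equ}
and the graded sector condition (Lemma \ref{l:GenaBound}) produces the uniform-in-$\eps$ estimate
\begin{equ}
\|(-\gensy)^{-1/2}\genap\phi\| \lesssim \|\sqrt{\cN}(-\gensy)^{1/2}\phi\| = \|(-\gensy)^{1/2}\phi\| \lesssim \|\phi\|_{H^1}\,,
\end{equ}
since the number operator $\cN$ acts as the identity on $\fock_1$. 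Summing the three contributions gives the claimed moment bound and concludes the plan.

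The hard part of the argument is precisely the control of the nonlinear drift uniformly in $\eps$: as stressed in Remark \ref{rem:A+A-}, the creation operator $\genap$ is itself singular as $\eps\to 0$, in the sense that $\|\genap\phi\|$ blows up even for a fixed smooth $\phi$, and the $\eps$-uniform estimate rests entirely on the partial smoothing provided by $(-\gensy)^{-1/2}$ together with the graded sector bound of Lemma \ref{l:GenaBound}, whose proof in turn hinges on the crucial integrals \eqref{e:CrucialBound2}-\eqref{e:CrucialBound3+}.
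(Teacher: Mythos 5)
Your proposal is correct and matches the paper's proof essentially step for step: Mitoma's criterion, the weak-formulation decomposition into linear drift, nonlinear drift and martingale, the It\^o trick (Lemma \ref{l:Ito}) for both drift terms, the identification $\cN^\eps_\phi=\genap\phi$ on $\fock_1$, and the graded sector condition (Lemma \ref{l:GenaBound}) to close the $\eps$-uniform estimate on the nonlinear drift. The only cosmetic difference is in the noise term: you invoke Burkholder--Davis--Gundy while the paper first uses Gaussian hypercontractivity to reduce the $p$-th moment to the second and then applies It\^o isometry---since the bracket of that martingale is deterministic the two routes are interchangeable.
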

\begin{proof}
By Mitoma's criterion~\cite[Theorem 3.1]{Mitoma}, the sequence $\{\eta^\eps\}_\eps$ is tight in 
$C([0,T],\cS'(\T^d))$ if and only if 
$\{\eta^\eps(\phi)\}_\eps$ is tight in $C([0,T],\R)$ for all $\phi\in\cS(\T^d)$. 
Therefore, let $\phi\in\cS(\T^d)$ be fixed and consider $t\mapsto\eta^\eps_t(\phi)$. 
A convenient way to prove tightness is Kolmogorov's criterion~\cite[Theorem 23.7]{Kal} which 
requires a uniform control over the moments of the time increments of $\eta^\eps(\phi)$.  
The Markov property and stationarity imply that for all $0\leq r<t\leq T$
\begin{equs}
\Exp|\eta^\eps_t(\phi)-\eta^\eps_r(\phi)|^p&=\Exp\left[\Exp\left[|\eta^\eps_t(\phi)-\eta^\eps_r(\phi)|^p|\filt_r\right]\right]\label{e:Moments}\\
&=\Exp\left[\Exp^{\eta^\eps_r(\phi)}\left[|\eta^\eps_{t-r}(\phi)-\eta^\eps_0(\phi)|^p\right]\right]=\Exp|\eta^\eps_{t-r}(\phi)-\eta^\eps_0(\phi)|^p
\end{equs} 
where $\{\filt_r\}_r$ is the filtration generated by $\eta^\eps$, so that we can reduce to the case of $r=0$. 
By~\eqref{e:SPDEweak}, we have  
\begin{equs}
&\Exp[|\eta^\eps_t(\phi)-\eta^\eps_0(\phi)|^p]^{\frac1p}\\
&\leq\Exp\Big[\Big|\int_0^t\eta^\eps_s(\Delta\phi)\dd s\Big|^p\Big]^{\frac1p}+\Exp\Big[\Big|\int_0^t\cN^\eps_\phi(\eta_s^\eps)\dd s\Big|^p\Big]^{\frac1p} +\Exp\Big[\Big|\int_0^t\xi(\dd s, (-\Delta)^{\half}\phi)\Big|^p\Big]^{\frac1p}
\end{equs}
and we will separately analyse each of the summands at the right hand side starting with the last as it is the easiest. 
Indeed, since the space-time white noise $\xi$ is Gaussian, an immediate application of 
Gaussian hypercontractivity~\eqref{e:GaussHyper} gives 
\begin{equs}[e:noise]
\Exp\Big[\Big|\int_0^t\xi(\dd s, (-\Delta)^{\half}\phi)\Big|^p\Big]^{\frac1p}&\lesssim_p\, \Exp\Big[\Big|\int_0^t\xi(\dd s, (-\Delta)^{\half}\phi)\Big|^2\Big]^{\frac12}\\
&=  t^{\frac12} \|(-\Delta)^{\half}\phi\|_{L^2(\T^d)}= t^{\frac12} \|\phi\|_{H^1(\T^d)}
\end{equs}
where in the last step we used~\eqref{eq:STWN} and It\^o isometry. 

We now turn to the other two summands in~\eqref{e:Moments}. As announced, we use Lemma~\ref{l:Ito} 
and obtain, for the first, 
\begin{equ}[e:Lapla]
\Exp\left[\left(\int_0^t\eta^\eps_s(\Delta\phi)\dd s\right)^p\right]^{\frac1{p}}\lesssim t^{\half}\|(-\gensy)^{-\half}(\Delta\phi)\|=t^{\half}\|\phi\|_{H^1(\T^d)}
\end{equ} 
where we further exploited the definition of $\gensy$ in~\eqref{e:gen}, while for the second  
\begin{equs}[e:NonlinT]
\Exp\left[\left(\int_0^t\cN^\eps_\phi(\eta^\eps_s)\dd s\right)^p\right]^{\frac1{p}}&\lesssim t^{\half}\|(-\gensy)^{-\half}\genap\phi\|\\
&\lesssim t^{\half}\|(-\gensy)^{\half}\phi\|= t^{\half}\|\phi\|_{H^1(\T^d)}
\end{equs}
where we used that the kernel of $\cN^\eps_\phi(\eta^\eps)$ is $\genap\phi$ as can be seen by~\eqref{e:ExampleA+}, 
and~\eqref{e:Abound}. 
By putting together~\eqref{e:Lapla},~\eqref{e:NonlinT} and~\eqref{e:noise} the assumption of Kolmogorov's criterion 
are satisfied and the statement follows at once.  
\end{proof}

\begin{remark}
Let us make a couple of comments regarding the space in which tightness is obtained. 
First, as it is standard in the applications of 
Kolmogorov's criterion, the proof above shows that tightness holds in the space of $\alpha$-H\"older 
continuous functions in time with values in $\cS'(\T^d)$, for any $\alpha<\tfrac12$.  
Furthermore, a slightly more involved argument could upgrade the space of Schwarz distributions to 
a Besov space of (optimal) negative regularity. This is what was done in~\cite[Theorem 4.5]{CES} and, 
with minor adjustments, could have been done here as well. 
Since we will not need any of these results, we refrained from following this route in these notes. 
\end{remark}

\subsection{A martingale formulation for the limit}\label{sec:Limit}

Theorem~\ref{thm:Tight} implies that the sequence $\{\eta^\eps\}_\eps$ converges along subsequences. 
Our goal is then to show that all the limit points have the same law and, to do so, 
it is convenient to have an independent characterisation of the law of the expected limit, 
which is the linear stochastic heat equation (SHE) given by 
\begin{equ}[e:SHE]
\partial_t \eta =\tfrac12(\Delta + D_\SHE(\fw\cdot\nabla)^2)\eta + (-\Delta - D_\SHE(\fw\cdot\nabla)^2)^{\half}\xi\,,\qquad \eta(0,\cdot)=\mu\,, 
\end{equ}
where $D_\SHE>0$ and $(\fw\cdot\nabla)^2$ are given as in Theorem~\ref{thm:main} and $\mu$ 
is a space white noise. 
The characterisation which best fits into our framework is in terms of a martingale problem. 
Before stating it, let us set $\gensy^\fw$ and $\cD$ be the operators on $L^2(\Omega)$ whose action on $f\in\fock_n$  
is given by 
\begin{equs}[eq:defD2]
 \cF(\gensy^\fw f)(k_{1:n})&\eqdef-\half(\fw\cdot k)_{1:n}^2\eqdef -\frac12\sum_{i=1}^n(\fw\cdot k_i)^2\hat f(k_{1:n})\,,\quad\text{and}\\
 \cD&\eqdef D_\SHE \gensy^\fw\,,
\end{equs}
so that, in particular, $\gensy^\fw=\tfrac12 (\fw\cdot\nabla)^2$ on $\fock_1$. 

\begin{definition}\label{def:MPSHE}
Let $T>0$, $\Omega=C([0,T], \cS'(\T^d))$ and $\CB$ the canonical Borel $\sigma$-algebra on $C([0,T], \cS'(\T^d))$. 
Let $\mu$ be a zero-average space white noise on $\T^d$ and $\cD$ the operator in~\eqref{eq:defD}. 
We say that a probability measure $\BP$ on $(\Omega,\CB)$ {\it solves the martingale problem for 
$\geneff\eqdef  \gensy+\cD$ %= \frac12(\Delta+D_\SHE (\fw\cdot \nabla)^2)$
with initial distribution $\mu$}, {if for all $\phi\in\cS(\T^d)$, 
the canonical process $\eta$ under $\BP$ is such that 
\begin{equs}[e:Mart]
\BM_t(F_j)&\eqdef F_{j}(\eta_t) - F_{j}(\mu) - \int_0^t \geneff F_j(\eta_s)\dd s\,,\qquad j=1,2
\end{equs}
is a local martingale, where $F_1(\eta)\eqdef \eta(\phi)$ and $F_2(\eta)\eqdef \eta(\phi)^2-\|\phi\|^2_{L^2(\mathbb T^d)}$. }
\end{definition}

Crucially, the martingale problem stated above is well-posed and indeed uniquely characterises the 
law of~\eqref{e:SHE}. 

\begin{theorem}\label{thm:WellPosedMP}
The martingale problem for $\geneff$ with initial distribution $\mu$ in Definition~\ref{def:MPSHE} has a unique solution 
and uniquely characterises the law of the solution to~\eqref{e:SHE} on $C([0,T], \cS'(\T^d))$.
\end{theorem}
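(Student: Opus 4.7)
\emph{Strategy.} The plan is to prove existence and uniqueness separately, exploiting the linearity of \eqref{e:SHE} and the Gaussianity of the initial data. A preliminary computation, based on the formulas for $\gensy$ and $\cD$ of Section~\ref{sec:maintools}, shows that, writing $A\eqdef\Delta+D_\SHE(\fw\cdot\nabla)^2$ (which is negative self-adjoint on $L^2_0(\T^d)$ since $D_\SHE\ge 0$),
\begin{equ}
\geneff F_1(\eta)=\tfrac12\eta(A\phi),\qquad \geneff F_2(\eta)=\eta(\phi)\,\eta(A\phi)+\|(-A)^{\half}\phi\|_{L^2}^2.
\end{equ}
In particular the prospective drift of $\eta_t(\phi)$ is linear in $\eta_t$ and the prospective martingale bracket is explicit and deterministic.

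\emph{Existence.} Since $A$ is diagonal in the Fourier basis with strictly negative eigenvalues $-(|k|^2+D_\SHE(\fw\cdot k)^2)$ for $k\in\Z^d_0$, I define $\hat\eta_t(k)$ as the scalar complex Ornstein--Uhlenbeck process solving
\begin{equ}
d\hat\eta_t(k)=-\tfrac12(|k|^2+D_\SHE(\fw\cdot k)^2)\hat\eta_t(k)\,dt+(|k|^2+D_\SHE(\fw\cdot k)^2)^{\half}dB_t(k),
\end{equ}
with independent complex Brownian motions subject to $\overline{B(k)}=B(-k)$ and initial condition $\hat\mu(k)$. The fluctuation--dissipation balance between drift and noise makes the white-noise measure $\P$ invariant, and summation against $e_{-k}$ produces a process $\eta\in C([0,T],\cS'(\T^d))$. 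A direct application of It\^o's formula, first on trigonometric test polynomials and then extended to $\phi\in\cS(\T^d)$ by density, verifies the martingale identities for $F_1$ and $F_2$.

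\emph{Uniqueness.} Let $\BP$ be any solution of the martingale problem and set $N_t(\phi)\eqdef\BM_t(F_1)$. The $F_1$-martingale exhibits $t\mapsto\eta_t(\phi)$ as a continuous semimartingale with finite-variation part $\tfrac12\int_0^t\eta_s(A\phi)\,ds$ and local-martingale part $N(\phi)$. Applying It\^o's formula to $\eta_t(\phi)^2=F_2(\eta_t)+\|\phi\|_{L^2}^2$ and equating its finite-variation part with $\int_0^t\geneff F_2(\eta_s)\,ds$ yields
\begin{equ}
d\langle N(\phi)\rangle_t=\bigl(\geneff F_2(\eta_t)-\eta_t(\phi)\,\eta_t(A\phi)\bigr)\,dt=\|(-A)^{\half}\phi\|_{L^2}^2\,dt,
\end{equ}
which is deterministic. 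Polarising through $\phi\mapsto\alpha_1\phi_1+\dots+\alpha_n\phi_n$ in both $F_1$ and $F_2$ determines every cross-bracket $\langle N(\phi_i),N(\phi_j)\rangle_t=\langle(-A)^{\half}\phi_i,(-A)^{\half}\phi_j\rangle_{L^2}\,t$, so that by L\'evy's characterisation $(N(\phi_i))_i$ is a jointly Gaussian continuous martingale. Because $A$ is Fourier-diagonal, the linear SDE $d\eta_t(\phi)=\tfrac12\eta_t(A\phi)\,dt+dN_t(\phi)$ decouples mode-by-mode into scalar linear SDEs solvable by variation of constants, expressing each $\hat\eta_t(k)$ as an explicit Gaussian functional of $\hat\mu(k)$ and $N$. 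This identifies uniquely the finite-dimensional distributions of $\eta$ under $\BP$, hence its law on $C([0,T],\cS'(\T^d))$, and the latter coincides with the law constructed in the existence step.

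\emph{Main obstacle.} The subtle point is that Definition~\ref{def:MPSHE} restricts the martingale problem to the two polynomial observables $F_1$ and $F_2$, so one cannot simply feed exponentials $e^{i\lambda\eta(\phi)}$ into $\geneff$ to read off the characteristic functional. The rescue is L\'evy's theorem, whose hypothesis -- a deterministic quadratic bracket -- is exactly what the $F_2$-martingale provides; this upgrades $N(\phi)$ from a generic continuous local martingale to a Gaussian one. Together with the linear drift coming from $F_1$ and the diagonalisability of $A$ in Fourier, Gaussianity propagates to the whole trajectory. This shortcut is tailored to the linear/Gaussian nature of the limiting equation; any non-linear drift or non-Gaussian initial law would force one to enlarge the class of test functions in the martingale problem.
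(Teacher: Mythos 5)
Your proposal is correct and matches the approach the paper sketches in the paragraph following the theorem statement (and that is worked out in the cited references): the $F_1$ martingale yields the semimartingale decomposition, the $F_2$ martingale pins down the deterministic quadratic variation $\langle N(\phi)\rangle_t = t\|(-A)^{1/2}\phi\|^2_{L^2}$, L\'evy's characterisation upgrades $N$ to a Gaussian martingale independent of $\mu$, and variation of constants on the decoupled Fourier modes then identifies the law. The only cosmetic difference is that you build the existing solution explicitly mode-by-mode via Ornstein--Uhlenbeck processes, while the paper simply invokes the strong/weak well-posedness of the linear SHE; these are the same fact.
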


The proof of the above statement is rather classical and therefore omitted. A good reference for it 
is~\cite[Appendix D]{MW} together with the proof of~\cite[Theorem 3.4]{CGT}. 
In general, when dealing with the well-posedness of martingale problems (see e.g.~\cite[Chapter 4]{EK}), 
the challenge is to identify a sufficiently large class of observables $F$ for which the fact that 
the right hand side of~\eqref{e:Mart} 
is a martingale suffices to characterise uniquely the probability measure $\BP$. 
Now,~\eqref{e:SHE} admits a unique analytically weak and probabilistically strong (and therefore 
probabilistically weak) solution, 
which means that given a probability space which supports both a space white noise $\mu$ and 
space-time white noise $\xi$, there exists a unique $\eta$ such that for all $\phi\in\cS(\T^d)$ 
\begin{equ}[e:SHEweak]
\eta_t(\phi)-\mu(\phi)-\int_0^t\geneff\eta_s(\phi)\dd s
\end{equ}
is not just {\it some} martingale, but it is precisely {\it the} martingale given by 
\begin{equ}
\BM_t(\phi)=\int_0^t\xi(\dd s, (-\Delta-D_{\SHE}(\fw\cdot\nabla)^2)^{\half}\phi)\,.
\end{equ}
The martingale above is $0$ at time $0$, continuous and Gaussian, and therefore its law is uniquely characterised by 
its quadratic variation. Getting back to the martingale problem in Definition~\ref{def:MPSHE}, 
we can now understand the reason why we only need observables $F$ which are either linear or quadratic: 
the former ensure that~\eqref{e:SHEweak} is indeed a martingale while the latter 
uniquely characterise its quadratic variation and thus its law (see the proof of~\cite[Theorem 3.4]{CGT}, 
for the connection between quadratic variation and quadratic functionals).

\subsection{The Fluctuation-Dissipation theorem} \label{sec:FDT}

Once we have tightness, we would like to use the martingale problem in Definition~\ref{def:MPSHE} to identify 
the limit. Since by Theorem~\ref{thm:WellPosedMP} the latter has a unique solution, we only 
need to prove that every limit point satisfies~\eqref{e:Mart}. 
Note that, for $\phi\in\cS(\T^d)$ and $F_1(\eta)=\eta(\phi)$, $F_2(\eta)=\eta(\phi)^2-\|\phi\|^2_{L^2(\T^D)}$, 
Dynkin's formula gives that 
\begin{equ}[e:Weakheu]
F_j(\eta^\eps_t)- F_j(\mu)-\int_0^t \gensy  F_j(\eta^\eps_s)\dd s-\int_0^t\genam F_j(\eta^\eps_s)\dd s-\int_0^t\genap F_j(\eta^\eps_s)\dd s 
\end{equ}
is a martingale. Since we have tightness in $C([0,T],\cS'(\T^d))$, the first three terms can be immediately 
seen to converge to $F_j(\eta_t)$, $F_j(\mu)$ and $\int_0^t \gensy  F_j(\eta_s)\dd s$ respectively, 
where $\eta$ is any limit point of the sequence $\{\eta^\eps\}_\eps$. 
In light of Remark~\ref{rem:A+A-} (see in particular  \eqref{e:A-Smooth}), the fourth 
instead vanishes as $F_j$ is smooth (see~\eqref{e:rem4} below for the proof). 

The analysis of the last summand is more delicate, as it should, since, e.g. for $j=1$,~\eqref{e:ExampleA+} 
morally shows that $\genap F_1(\eta)$ is nothing but the nonlinearity $\NN_\phi(\eta)$ in~\eqref{e:nonlin1}, 
which is precisely what we need to make sense of! 
According to the discussion in Section~\ref{sec:ideas}, one expects this term 
to converge to $\int_0^t\cD F_j(\eta_s)\dd s$ plus a martingale. By taking conditional expectation 
with respect to the initial condition $\eta_0$ (so to remove the martingale contribution), we should have   
%For Theorem~\ref{thm:main} to be true, one needs to show that 
\begin{equ}[eq:Quentin1]
\mathbf E\Big[\int_0^t \genap F_j(\eta_s^\eps) \dd s \, \Big|\eta_0\Big]-\mathbf E\Big[\int_0^t \cD F_j(\eta_s^\eps) \dd s \, \Big| \eta_0\Big] {\approx} 0
\end{equ}
 where we recall that
 $\cD$ is the operator in~\eqref{eq:defD} and the approximation is to be understood in the sense that 
 the left-hand side, as a function of the initial
condition $\eta_0$, is small in $L^2(\mathbb P)$ as $\eps\to0$.
The problem is that~\eqref{eq:Quentin1} is all but trivial. Indeed, a naive application of the It\^o trick 
simply gives
\begin{equs}
\mathbf E\Big[\Big|\int_0^t(\genap F_j-\cD F_j)(\eta^\eps_s)\dd s\Big|^2\Big]&\lesssim t\|(-\gensy)^{-\half}(\genap f_j-\cD f_j)\|^2\\
&=\|(-\gensy)^{-\half}\genap f_j\|^2 + \|(-\gensy)^{-\half}\cD f_j\|^2
\end{equs}
where we denoted by $f_j\in\fock_j$ the kernel associated to $F_j\in\cH_j$, and the last 
is an equality since $\genap f_j\in\fock_{j+1}$ and $\cD f_j\in\fock_j$ live in different chaoses 
and are therefore orthogonal. The right hand side is the sum of positive 
quantities, the second of which does not even depend on $\eps$, and therefore 
cannot be small in $\eps$. 

What the above suggests is that the observables $F_j$'s are not rich enough to capture 
the complexity of $\genap F_j$ and a larger class of observables is needed in order to 
describe its large scale-behaviour. The next theorem, which we will refer to 
as the {\it Fluctuation-Dissipation relation}, identifies such class  
and states the conditions its elements need to satisfy to conclude the proof of Theorem~\ref{thm:main}.  

\begin{theorem}\label{thm:FD}
Let $\phi,\psi\in\cS(\T^d)$ be fixed and, for $j=1,2$, $\gf_j\in\fock_j$ be given by
\begin{equ}
  \label{eq:kernelsF}
\gf_1\eqdef\phi\qquad\text{and}\qquad \gf_2\eqdef[\phi\otimes\psi]_{sym}\eqdef
\frac {\phi\otimes\psi+\psi\otimes \phi}2\,.
\end{equ}
Then, for every $i\in\{2,3\}$ and any $n\in\N$, there exists $\wN\in\oplus_{j=i}^n\fock_j$ such that 
$\|(-\gensy)^{\half}\wN\|$ is bounded uniformly in $\eps$ and $n$, and the following limits hold
\begin{equs}
&\lim_{n\to\infty}\limsup_{\eps\to 0}\|\wN\|=0\,,\label{e:L2}\\
&\lim_{n\to\infty}\limsup_{\eps\to 0}\|(-\gensy)^{-\frac12}(-\gen \wN-\genap\gf_{i-1}+\genam \wN_{i})\|=0\,,\label{e:ApproxH1}\\
%\|P_{>m}u^{\eps,n}\|&\leq C\sqrt{f(\eps)}\Big(1+\|(-\gensy)^{-\tfrac12}\psi\|\Big),\label{e:ApproxL2}
%&\lim_{n\to\infty}\lim_{\eps\to 0} \|\vN\|=0\,,\label{e:L2limit}\\
&\lim_{n\to\infty}\limsup_{\eps\to 0} \|(-\gensy)^{-\half}[\genam\wN_{i}-\cD\gf_{i-1}\|=0 \,,\label{e:Diffusivity}
\end{equs}
where $\cD$ is defined in \eqref{eq:defD}.
\end{theorem}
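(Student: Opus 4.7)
The plan is to construct $\wN=\wN_i+\wN_{i+1}+\cdots+\wN_n$ recursively, starting from the bottom rung $\wN_i$ chosen so that $\genam\wN_i$ produces the target $\cD\gf_{i-1}$, then climbing up the chaos ladder via the resolvent ansatz $\wN_{j+1}:=(-\gensy)^{-1}\genap\wN_j$. The identification \eqref{e:Diffusivity} pins down $\wN_i$, whereas \eqref{e:ApproxH1} and \eqref{e:L2} will follow from iterated applications of the graded sector condition of Lemma~\ref{l:GenaBound}.

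First I would define $\wN_i$. Since the only operator from $\fock_i$ to $\fock_{i-1}$ is $\genam$, \eqref{e:Diffusivity} forces $\wN_i$ to be an approximate preimage of $\cD\gf_{i-1}$ under $\genam$. The natural ansatz is $\wN_i=-(-\gensy)^{-1}\genap\gf_{i-1}$, since then $\genam\wN_i=-\genam(-\gensy)^{-1}\genap\gf_{i-1}$, and by the momentum commutation in Lemma~\ref{lem:essid} this composition acts in Fourier as a multiplier proportional to a sum over $\ell+m=k$ of $[\fw\cdot(\ell+m)]^2\indN{\ell,m}/(|\ell|^2+|m|^2)$, which after symmetrisation is a scalar multiple of $(\fw\cdot k)^2$. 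Extracting the limit of this multiplier identifies $D_\SHE$: in $d\geq 3$ the sum converges to a finite integral; in $d=2$ the sum diverges as $|\log\eps|$ but is balanced by $\lambda_\eps^2=1/\log\eps^{-2}$ to give the explicit finite value in \eqref{eq:DSHEd2}, a matching that ultimately requires the Replacement Lemma.

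Next I would iterate: set $\wN_{j+1}:=(-\gensy)^{-1}\genap\wN_j$ for $j=i,\dots,n-1$. By construction the intermediate-chaos residuals in \eqref{e:ApproxH1} cancel exactly, leaving only the $\genam\wN_{i+1}$ correction at chaos $i$, which is to be absorbed into $\wN_i$ by perturbing the ansatz above, and the top-chaos boundary $\genap\wN_n$ at chaos $n+1$. All three limits then reduce to bounding $\|(-\gensy)^{\half}\wN_j\|$ and $\|(-\gensy)^{-\half}\genap\wN_n\|$ uniformly in $\eps$ and $n$. This is where Lemma~\ref{l:GenaBound} enters: iterating $\|(-\gensy)^{\half}\wN_{j+1}\|\lesssim\sqrt{j+1}\,\|(-\gensy)^{\half}\wN_j\|$ yields factorial-like growth which must be beaten by $\eps$-smallness. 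In $d\geq 3$ the combination $\lambda_\eps^2\eps^{-d}\sim\eps^{d-2}$ extracted from each composition $\genap(-\gensy)^{-1}\genap$ furnishes this smallness at every step; in $d=2$ one relies instead on the Replacement Lemma to replace $\genap(-\gensy)^{-1}\genap$ by a scalar multiple of $\gensy^\fw$, producing a geometrically contracting iteration modulo small errors.

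The main obstacle will be the identification step in $d=2$: the constant $D_\SHE$ is not merely a spectral quantity but emerges as the fixed point of a self-consistency relation for the effective diffusivity, and extracting the explicit formula \eqref{eq:DSHEd2} requires the Replacement Lemma, which is essentially a dynamical one-loop renormalisation statement. Simultaneously one must control the iteration so that the $\sqrt{n}$ growth from the graded sector condition does not destroy the bounds as $n\to\infty$: the $\eps\to0$ limit is taken first, so that each rung of the chaos ladder produces the needed $\eps$-smallness, and only then is $n$ sent to infinity, which also forces $\|\wN\|\to 0$ since the leading contribution to $\wN$ is $\wN_i$ and the remaining rungs carry a small parameter.
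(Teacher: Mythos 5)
Your recursive construction — set $\wN_i$ to roughly $(-\gensy)^{-1}\genap\gf_{i-1}$, then climb via $\wN_{j+1}\eqdef(-\gensy)^{-1}\genap\wN_j$ — has the right ladder shape, but several steps are wrong in a way that breaks the argument.

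\emph{The intermediate residuals do not cancel, and in $d=2$ they are not small.} If you plug your ansatz into $-\gen\wN-\genap\gf_{i-1}+\genam\wN_i$ and split by chaos level, the recursion cancels only the $\gensy$ and $\genap$ terms, not the $\genam$ feedback from above: the residual at each chaos level $j$ with $i\le j<n$ is $-\genam\wN_{j+1}$, plus the top boundary $-\genap\wN_n$ at level $n+1$. Your claim that ``intermediate-chaos residuals cancel exactly'' is therefore a computational error — they all survive. For \eqref{e:ApproxH1} you must prove each of these is small in $(-\gensy)^{-\half}$-norm, and in $d=2$ this fails at every level: $\genam(-\gensy)^{-1}\genap$ is an $O(1)$ operator (criticality), so $\genam\wN_{j+1}$ is comparable to $\wN_j$ in the relevant norm. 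The paper's fix is to build the recursion with the \emph{dressed} resolvent $(-\gensy-\gensy^\fw\cGN)^{-1}$ (Definition~\ref{def:ReplEq}); the Replacement Lemma is then an estimate on the self-consistency residual $\genam(-\gensy-\gensy^\fw\cGN)^{-1}\genap+\gensy^\fw\cGN$ being $O(\lambda_\eps^2\cN^2)$, with the $-\gensy^\fw\cGN$ piece producing $\cD\gf_{i-1}$. Invoking the Replacement Lemma as an after-the-fact substitution of $\genam(-\gensy)^{-1}\genap$ by a multiple of $\gensy^\fw$ is not what the lemma says; $G$ in \eqref{eq:defG} is the fixed point of an ODE, and the operator it renormalises must already carry it inside the resolvent.

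\emph{The uniform bound on $\|(-\gensy)^{\half}\wN\|$ is not established by the iterated graded sector condition.} The theorem explicitly requires this bound to be uniform in both $\eps$ and $n$. Iterating Lemma~\ref{l:GenaBound} as you propose gives $\|(-\gensy)^{\half}\wN_{j+1}\|\lesssim\sqrt j\,\|(-\gensy)^{\half}\wN_j\|$ — factorial growth, with no $\eps$ factor at all. The $\eps$-gain in \eqref{eq:operatorepiccolo} appears only in the $L^2$-norm $\|\wN_{j+1}\|$, not in the $H^1$-type norm $\|(-\gensy)^{\half}\wN_{j+1}\|$, so it cannot beat the factorial as $n\to\infty$. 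Your construction therefore violates the requirement. The paper instead takes $\wN$ to solve the \emph{coupled} truncated generator equation (for $d\ge3$, \eqref{e:vne}; for $d=2$, the Replacement Equation), and obtains the uniform weighted bound $\|\cN^k(-\gensy)^{\half}\wN\|\lesssim\|(-\gensy)^{-\half}\genap\gf_{i-1}\|$ via the telescoping a priori estimate of Proposition~\ref{P:AprioriGeneratorEq}, whose proof leans crucially on the cancellation coming from the antisymmetry $\genam=-(\genap)^*$ — precisely the feedback your recursion drops. Finally, for $d\ge3$ the constant $D_\SHE$ is not just the limit of the one-step multiplier $\genam(-\gensy)^{-1}\genap$ (which corresponds to the single path $p=(1,2,1)$ in Section~\ref{sec:d3}); it is a sum over all paths in the resolvent expansion, so $\wN_i$ must feel the higher chaoses, as it does for the solution of the full truncated equation but not for your bottom-up iterate.
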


In the rest of the section, we will show how Theorem~\ref{thm:FD} implies Theorem~\ref{thm:main} but 
before that we want to give a heuristic which explains how to choose the functions $\wN$ 
and why the conditions~\eqref{e:L2}-\eqref{e:Diffusivity} are needed.  
\medskip

As pointed out at the beginning of this subsection, the goal is to show~\eqref{eq:Quentin1}. 
Taking a Laplace transform with respect
to $t$, and ignoring the issue of
exchanging the limit in $\eps$ and the integral in $t$, the validity of 
\eqref{eq:Quentin1} for every $t > 0$ is essentially equivalent to
proving that, for every $\mu > 0$,
\begin{equs}
0&\approx\mathbf E\Big[\int_0^{+\infty} \genap F_j(\eta_s^\eps) \ e^{- \mu s} \dd s \, \Big| \eta_0\Big]- \mathbf E\Big[\int_0^{+\infty} \cD F_j(\eta_s^\eps) \ e^{- \mu s} \dd s \, \Big| \eta_0\Big]\\
&=(\mu - \gen)^{-1} \genap F_j(\eta_0) -(\mu - \gen)^{-1} \cD F_j (\eta_0)
\end{equs}
the last equality being a consequence of $\mathbf E(G(\eta_s^\eps)|\eta_0)=[e^{\gen s}G](\eta_0)$. 
To be consistent with the statement above, denote, for $i\in\{2,3\}$, by $f_{i-1}$ the kernel associated to $F_{i-1}$ 
so that in the language of Fock spaces the previous condition reads 
\begin{equ}[eq:Quentin2]
(\mu - \gen)^{-1} \genap f_{i-1} -(\mu - \gen)^{-1} \cD f_{i-1} \approx 0\,  
\end{equ}
where this time the left-hand side is an element of $\fock$ and the approximation 
means that its norm is small as $\eps\to0$.
Since~\eqref{eq:Quentin2} {\it cannot} be a consequence $\genap f_{i-1}\approx  \cD f_{i-1}$ 
(right and left hand side are orthogonal and the right-hand side is not small), we introduce the operator 
$\gen_{\ge i}=P_{\ge i}\gen P_{\ge i}$, with $P_{\ge i}$ the orthogonal projection onto $\oplus_{j\ge i}\fock_j$, 
and define
\begin{equation}\label{eq:rationale}
w^\eps\eqdef (\mu-\gen_{\ge i})^{-1}\genap f_{i-1},
\end{equation}
which belongs to $\oplus_{j\ge i}\fock_j$ as $\genap f_{i-1}\in\fock_i$. 
Note that~\eqref{eq:rationale} implies 
\begin{equ}[eq:Cond]
(\mu -  \gen) w^\eps = \genap f_{i-1} - \genam w^\eps_i \, ,
\end{equ}
so that, inverting $(\mu -  \gen)$ and adding and subtracting $(\mu - \gen)^{-1}\cD f_{i-1}$, 
we obtain 
\begin{equ}
(\mu - \gen)^{-1} \genap f_{i-1}- (\mu - \gen)^{-1}\cD f_{i-1}= w^\eps+(\mu - \gen)^{-1} [\genam w^\eps_i-\cD f_{i-1}]\,.
\end{equ}
In other words, we have re-expressed the left-hand side of~\eqref{eq:Quentin2} in terms 
of the solution to~\eqref{eq:rationale}, thus showing that~\eqref{eq:Quentin2} is equivalent  
to proving 
\begin{equs}
\lim_{\eps\to0}\|w^\eps\|^2 &= 0\,,      \label{eq:condiz1}\\
\lim_{\eps\to 0}\|(\mu - \gen)^{-1} [\genam w^\eps_i-\cD f_{i-1}]\|^2&= 0\,.     \label{eq:condiz2}
\end{equs}
The limit~\eqref{eq:condiz1} clearly corresponds to~\eqref{e:L2}, while to see that~\eqref{eq:condiz2} 
is implied by~\eqref{e:Diffusivity}, it suffices to note that, for any $\psi$, 
\begin{equ}[e:BoundGenGensy]
\|(\mu-\gen)^{-1}\psi\|\lesssim \|(\gensy)^{-\half}\psi\|\,.
\end{equ}
Indeed, by positivity of $\gensy$, we have
\begin{equs}
     \|(\mu-\gen)^{-1}\psi\|&=\|(-\mu-\gen)^{-1}(\mu-\gensy)^{\half}(\mu-\gensy)^{-\half}\psi\|\\
     &\le \mu^{-\half}\|(-\mu-\gensy)^{\half}(\mu-\gen)^{-1}(\mu-\gensy)^{\half}(\mu-\gensy)^{-\half}\psi\|\\
     &\leq \mu^{-\half} \|(\mu-\gensy)^{-\half}\psi\|
\end{equs}
where in the last step we used that 
\begin{equs}
     (-\mu-\gensy)^{\half}(\mu-\gen)^{-1}(\mu-\gensy)^{\half}&=
     (-\mu-\gensy)^{\half}(\mu-\gensy - \gena)^{-1}(\mu-\gensy)^{\half}\\
     &=(I-(\mu-\gensy)^{-\half}\gena(\mu-\gensy)^{-\half})^{-1}
\end{equs}
and this operator is bounded by $1$ in norm, since $(\mu-\gensy)^{-\half}\gena(\mu-\gensy)^{-\half}$ is antisymmetric 
and hence has purely imaginary spectrum. 
\medskip

The above heuristics not only motivates~\eqref{e:L2} and~\eqref{e:Diffusivity} but 
suggests that our family of observables should be taken as the elements of $\fock$
defined according to~\eqref{eq:rationale}. The problem is that, when written in its chaos components, the 
latter is an infinite system of equations and it is not even a priori clear whether or not it admits a solution. 
This is the reason why we will have to further truncate the system in $n$, which in turn will cause~\eqref{eq:Cond} 
not to be an equality thus giving condition~\eqref{e:ApproxH1}.

\subsection{Proof of the main result}

In this section, we will rigorously prove Gaussian fluctuations for the Burgers equation assuming Theorem~\ref{thm:FD}. 

\begin{proof}[of Theorem~\ref{thm:main}] 
By Theorem~\ref{thm:Tight}, we know that the sequence $\{\eta^\eps\}_\eps$ is tight in $C([0,T],\cS'(\T^d))$, 
hence it converges along subsequences. If we prove that any limit point is a solution of the martingale 
problem in Definition~\ref{def:MPSHE} then the statement follows by Theorem~\ref{thm:WellPosedMP}. 

Let $\eta\in C([0,T],\cS'(\T^d))$ be a limit point. To verify that $\eta$ satisfies the martingale problem, 
we need to show that for every given $\phi\in\cS(\T^d)$ the processes
$\BM(f_{i-1})$, $i=2,3$, defined according to~\eqref{e:Mart} with $f_1=\eta(\phi)$ and 
$f_2=\eta(\phi)^2-\|\phi\|^2_{L^2(\mathbb T^d)}$, are local martingales. In turn, this follows 
if we prove that for every $s\in[0,T]$ and 
$G\colon C([0,T],\cS'(\T^d))\to\R$ bounded continuous 
we have 
\begin{equ}[e:FINITA]
\Exp[\delta_{s,t}\BM_\cdot(f_{i-1}) G(\eta\restr_{[0,s]})]=0\,,
\end{equ} 
where we introduced a convenient notation for the time increment, i.e. $\delta_{s,t}f\eqdef f(t)-f(s)$. 
Now, by the definition 
of $\BM(f_{i-1})$, we deduce that 
\begin{equs}[e:Firstlim]
\Exp[(\BM_t&(f_{i-1})-\BM_s(f_{i-1})) G(\eta\restr_{[0,s]})]\\
&=\Exp\Big[\Big( f_{i-1}(\eta_t)- f_{i-1}(\eta_s)-\int_s^t \geneff  f_{i-1}(\eta_r)\dd r\Big) G(\eta\restr_{[0,s]})\Big]\\
&=\lim_{\eps\to 0}\Exp\Big[\Big( f_{i-1}(\eta^\eps_t)- f_{i-1}(\eta^\eps_s)-\int_s^t \geneff  f_{i-1}(\eta^\eps_r)\dd r\Big) G(\eta^\eps\restr_{[0,s]})\Big]
\end{equs}
where %\giuseppe{In principle the expectation above depends on $\eps$
 % because it is the law of $\eta^\eps$...should we be more precise?}
we used that $\eta^\eps$ converges in law to $\eta$ in
$C([0,T],\cS'(\T^d))$ together with~\cite[Eq. (5.11)]{CETWeak} to
approximate the first factor in the expectation with bounded
continuous functionals.  To analyse this latter term, let us write
the equation for $\eta^\eps$ in such a way that we can identify the
elements which are relevant to the limit.

{
By Dynkin's formula and the weak formulation of~\eqref{e:BurgersScaled} in~\eqref{e:SPDEweak}, we have
\begin{equ}[e:Weak]
f_{i-1}(\eta^\eps_t)- f_{i-1}(\eta^\eps_s)-\int_s^t \gensy  f_{i-1}(\eta^\eps_r)\dd r-\int_s^t\gena f_{i-1}(\eta^\eps_r)\dd r=\delta_{s,t}M^\eps_\cdot( f_{i-1})
\end{equ}
where $M^\eps( f_{i-1})$ is the martingale whose quadratic variation is given by 
\begin{equ}[e:M1]
\langle M^\eps_\cdot( f)\rangle_t=\int_0^t\sum_k|k|^2|[D_k f](\eta^\eps_s)|^2 \dd s
\end{equ} 
and $D_k$ is the Malliavin derivative in~\eqref{e:Malliavin}.

The term which is responsible for creating the new noise and the new Laplacian, is that containing $\genap f_{i-1}$. 
In order to describe it, notice that 
the kernel of $ f_1$ in $\fock$ is $ f_1=\phi$, while that of $f_2$ is  
$\phi\otimes\phi$. Then, we consider the random variable $\WN\in L^2(\P)$, 
for $n\in\N$, whose kernel is $\wN$ in the statement. 
By Dynkin's formula applied to $\WN$, we have 
\begin{equ}[e:DynkinCiao]
\WN(\eta^\eps_t)-\WN(\eta^\eps_s)-\int_s^t \gen\WN(\eta^\eps_r)\dd r=\delta_{s,t}M_\cdot^\eps(\WN)
\end{equ}
where $M^\eps(\WN)$ is the martingale whose quadratic variation is the same as~\eqref{e:M1} with 
$ f_{i-1}$ replaced by $\WN$. 
We now go back to~\eqref{e:Weak} which we rewrite as 
\begin{equ}[e:etaWeak]
 f_{i-1}(\eta^\eps_t)- f_{i-1}(\eta^\eps_s)-\int_s^t \geneff  f_{i-1}(\eta^\eps_r)\dd r=\delta_{s,t}(M^\eps_\cdot(f_{i-1}) +M_\cdot^\eps(\WN))+\delta_{s,t} R^{\eps,n}
\end{equ}
for $R^{\eps,n}\eqdef \sum_{j=1}^4 R_j^{\eps,n}$ and the $R_j^{\eps,n}$'s are defined as  
\begin{equs}
R_1^{\eps,n}(t)&\eqdef \WN(\mu)-\WN(\eta^\eps_t)\,,\\
R_2^{\eps,n}(t)&\eqdef \int_0^t \Big(\gen \WN+\genap f_{i-1}-\genam \WN_{i}\Big)(\eta^\eps_s)\dd s\,,\\
R_3^{\eps,n}(t)&\eqdef \int_0^t \Big(\genam \WN_{i}(\eta^\eps_s) -\cD  f_{i-1}(\eta^\eps_s)\Big)\dd s\,,\\
R_4^{\eps,n}(t)&\eqdef \int_0^t\genam f_{i-1}(\eta^\eps_s)\dd s\,.
\end{equs}
We now get back to~\eqref{e:Firstlim}, which, in view of~\eqref{e:DynkinCiao} equals
\begin{equ}
\lim_{n\to\infty}\lim_{\eps\to0}\Exp\Big[\Big( \delta_{s,t}(M^\eps_\cdot(f_{i-1}) +M_\cdot^\eps(\WN))+\delta_{s,t} R^{\eps,n}\Big) G(\eta^\eps\restr_{[0,s]})\Big]\,.
\end{equ}
Since $M^\eps(f_{i-1})$ and $M^\eps(\WN)$ are martingales, for every $n$ and every $\eps$ 
we have 
\begin{equ}[e:Secondlim]
\Exp\Big[\delta_{s,t}(M^\eps_\cdot(f_{i-1}) +M_\cdot^\eps(\WN)) G(\eta^\eps\restr_{[0,s]})\Big]=0\,.
\end{equ}
For the other term instead, we apply Cauchy-Schwarz and exploit the boundedness of $G$, 
from which we obtain
\begin{equ}
  \Exp\Big[\delta_{s,t} R^{\eps,n} G(\eta^\eps\restr_{[0,s]})\Big]\leq \|G\|_\infty\Exp\Big[| \delta_{s,t} R^{\eps,n}|^2\Big]^{\frac12}\leq  \|G\|_\infty\sum_{j=1}^4\Exp\Big[| \delta_{s,t} R_j^{\eps,n}|^2\Big]^{\frac12}\,,
\end{equ}
so that we are left to show that each of the summands at the right hand side converges to $0$. 
Let us begin with the first, which can be controlled as 
\begin{equs}
\Exp\Big[| \delta_{s,t} R_1^{\eps,n}|^2\Big]^{\frac12}&=\Exp\Big[| \WN(\eta^\eps_t)-\WN(\eta^\eps_s)|^2\Big]^{\frac12}\\
&\leq \Exp\Big[| \WN(\eta^\eps_t)|^2\Big]^{\frac12}+\Exp\Big[| \WN(\eta^\eps_s)|^2\Big]^{\frac12}=2\|\wN\|
\end{equs}
and the right hand side converges to $0$ as $\eps$ goes to $0$ by~\eqref{e:L2}. 
For the others, we apply the It\^o trick, Lemma~\ref{l:Ito}, which gives
\begin{equs}
\Exp\Big[(\delta_{s,t}R_2^{\eps,n})^2\Big]^{\frac12}&\leq Ct^{\frac12}  \|(-\gensy)^{-\frac12}(-\gen \wN-\genap f_{i-1}+\genam \wN_{i})\|\,,\label{e:rem2}\\
\Exp\Big[(\delta_{s,t}R_3^{\eps,n})^2\Big]^{\frac12}&\leq C{t}^{\frac12}  \|(-\gensy)^{-\half}[\genam\wN_{i}-\cD f_{i-1}]\|\,,\label{e:rem3}\\
\Exp\Big[(\delta_{s,t}R_4^{\eps,n})^2\Big]^{\frac12}&\leq C{t}^{\frac12} \|(-\gensy)^{-\half}\genam f_{i-1}\|\,,\label{e:rem4}
\end{equs}
with $C$ independent of $n$. Now,~\eqref{e:rem2} and~\eqref{e:rem3} converge to $0$ in the double 
limit as $\eps\to 0$ first and $n\to\infty$ then, by~\eqref{e:ApproxH1} and~\eqref{e:Diffusivity}. 
Concerning~\eqref{e:rem4}, for $i=2$, $\genam f_1=0$, while for 
$i=3$ we leverage the smoothness of $ f_2$. Indeed,  
\begin{equs}
\|(-\gensy)^{-\half}\genam f_2\|^2&\leq\sum_{k} \frac{(\fw\cdot k)^2}{|k|^2}\lambda_\eps^2\Big(\sum_{\ell+m=k}\indN{\ell,m}\hat f_2(\ell,m)\Big)^2\\
&\lesssim  \Big(\lambda_\eps^2\sum_{|\ell|<\eps^{-1}}\frac{1}{|\ell|^{2\alpha}}\Big) \sum_{\ell,m}(|\ell|^2+|m|^2)^{\alpha}|\hat f_2(\ell,m)|^2\\
&=\Big(\lambda_\eps^2\sum_{\ell}\frac{1}{|\ell|^{2\alpha}}\Big)\|(-\gensy)^{\alpha/2} f_2\|^2
\end{equs}
for $\alpha>1$. The last norm is finite and it is not hard to see that the quantity in parenthesis is converging to $0$, 
thus implying that the $\eps\to0$ limit of~\eqref{e:rem4} is $0$. 

Collecting the results above together with~\eqref{e:Firstlim} and~\eqref{e:Secondlim},~\eqref{e:FINITA} 
follows at once so that the proof of the theorem is complete. }
\end{proof}

As it should be clear by now, the bulk of our work is to establish 
the existence of functions $\{\wN\}_{n,\eps}$ satisfying conditions~\eqref{e:L2}-~\eqref{e:Diffusivity} 
in Theorem~\ref{thm:FD}. 
This is quite subtle and we will adopt different strategies for $d=2$ and $d\geq 3$. 
In both cases, the rationale behind the search for $\wN$ is dictated by \eqref{eq:rationale}.

\section{Characterisation of limit points: the Fluctuation-Dissipation relation}

\label{sec:char}
According to the heuristic provided in the previous section, we would like to choose $w^\eps$ as in~\eqref{eq:rationale} 
but without the mass $\mu$. Taking a (slightly) more general perspective, we consider, for $i\in\N$, $u^\eps$ solving 
\begin{equ}\label{eq:GenEq}
-\gen_{\geq i} u^{\eps}=g\,,
\end{equ} 
where we recall that $\gen_{\ge i}=P_{\ge i}\gen P_{\ge i}$, with $P_{\ge i}$ the orthogonal projection 
onto $\oplus_{j\ge i}\fock_j$, 
and $g$ is an arbitrary element of $\bigoplus_{j\geq i}\fock_j$. Later on we will mostly consider 
the case of $g=\genap f_{i-1}$ for $f_{i-1}$ as in~\eqref{eq:kernelsF}. 
When decomposed in its chaos components~\eqref{eq:GenEq} becomes an infinite system
\begin{equs}[e:System]
			&\,\,\,\vdots\\
			-\gensy u^\eps_n-\genam u^\eps_{n+1}-\genap u^\eps_{n-1}&=g_n\\
			-\gensy u^\eps_{n-1}-\genam u^\eps_{n}-\genap u^\eps_{n-2}&=g_{n-1}\\
			&\,\,\,\vdots\\
			-\gensy u^\eps_{i+1}-\genam u^\eps_{i+2}-\genap u^\eps_{i}&=g_{i+1}\\
			-\gensy u^\eps_i-\genam u^\eps_{i+1}&=g_i\,.
\end{equs}
Note the absence of $\genap u^\eps_{i-1}$ from the last equation due to the projection. 
As we mentioned, it is not a priori clear that the~\eqref{e:System} admits a solution, 
not even if $g$ is regular and with finite chaos decomposition. 
Therefore, we are lead to introduce a second truncation which removes 
all high chaoses. To be precise, for $i, m\in\N$, let $g\in\bigoplus_{j=i}^m\Gamma L^2_j$ and $i\leq m\le n$. 
Define $u^{\eps,n}$ as the unique solution 
of the {\it truncated generator equation} which is defined as 
\begin{equation}\label{eq:TruncGenEq}
-\cL^\eps_{i,n} u^{\eps,n}=g\,,
\end{equation}
where, analogously to the above, $\cL^\eps_{i,n}=P_{i,n}\gen P_{i,n}$, 
with $P_{i,n}$ the orthogonal projection onto $\bigoplus_{j=i}^n\Gamma L^2_j$. 
In other words, $u^{\eps,n}$ solves the finite system of equations given by 
\begin{equs}[e:SystemTrunc]
			-\gensy  u^{\eps,n}_n-\genap  u^{\eps,n}_{n-1}&=0\\
			-\gensy  u^{\eps,n}_{n-1}-\genam  u^{\eps,n}_{n}-\genap  u^{\eps,n}_{n-2}&=0\\
			&\,\,\,\vdots\\
			-\gensy  u^{\eps,n}_{m}-\genam  u^{\eps,n}_{m+1}-\genap  u^{\eps,n}_{m-1}&=g_m\\
			&\,\,\,\vdots\\
			-\gensy  u^{\eps,n}_{i+1}-\genam  u^{\eps,n}_{i+2}-\genap  u^{\eps,n}_{i}&=g_{i+1}\\
			-\gensy  u^{\eps,n}_i-\genam  u^{\eps,n}_{i+1}&=g_i\,.
\end{equs}
Our first result, we obtain a weighted a priori estimate which ensures that {\it uniformly in $\eps$}
the high chaoses of $u^{\eps,n}$ decay polynomially in the number operator. 

\begin{proposition}\label{P:AprioriGeneratorEq}
Let $m\in\N$, $i<m$ and $g\in\bigoplus_{j=i+1}^m\Gamma L^2_j$. For $n\in\N$, $n\geq m$, let $u^{\eps,n}$ 
be the solution of the truncated generator equation in~\eqref{eq:TruncGenEq} in any dimension $d\geq 2$. 
Then, there exists a 
positive constant $C=C(m,k)$ independent of $n,\eps$ and $g$, such that
\begin{equation}\label{e:AprioriMain}
\|\mathcal N^k (-\gensy)^\frac12 u^{\eps,n}\|\leq C\|(-\gensy)^{-\frac12}g\|\,,
\end{equation}
where $\cN$ is the number operator in Definition~\ref{def:NoOp}. 
\end{proposition}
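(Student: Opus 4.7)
My plan is a weighted energy estimate on the truncated system~\eqref{e:SystemTrunc}, using $\cN^{2k}$ as the weight and closed by the Graded Sector Condition (GSC, Lemma~\ref{l:GenaBound}) together with an induction on $k\in\N$. For the \emph{base case} $k=0$, I test~\eqref{eq:TruncGenEq} against $u^{\eps,n}$: since $\cL^\eps_{i,n}=P_{i,n}\gen P_{i,n}$ and $u^{\eps,n}=P_{i,n}u^{\eps,n}$, and since antisymmetry of $\gena$ forces $\langle u^{\eps,n},\gena u^{\eps,n}\rangle=0$, only the symmetric part survives and Cauchy--Schwarz yields $\|(-\gensy)^{\half} u^{\eps,n}\|\le\|(-\gensy)^{-\half}g\|$.

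For the \emph{inductive step}, I test against $\cN^{2k}u^{\eps,n}$. Since $\cN$ commutes with both $\gensy$ and $P_{i,n}$, the symmetric contribution is exactly $\|\cN^k(-\gensy)^{\half} u^{\eps,n}\|^2$. The antisymmetric contribution I rewrite, via $\langle \cN^k v,\gena\cN^k v\rangle=0$, as
\begin{equation*}
\langle \cN^{2k}u^{\eps,n},\gena u^{\eps,n}\rangle=\langle \cN^k u^{\eps,n},[\cN^k,\gena]u^{\eps,n}\rangle,
\end{equation*}
and then telescope it, using the chaos-shift properties of $\genap,\genam$ and $\genam=-(\genap)^\ast$, into
\begin{equation*}
\sum_{m'}\bigl((m'+1)^{2k}-(m')^{2k}\bigr)\langle u^{\eps,n}_{m'+1},\genap u^{\eps,n}_{m'}\rangle,
\end{equation*}
whose prefactor is $\lesssim_{k}(m'+1)^{2k-1}$. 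Each inner product I bound, via Cauchy--Schwarz and the GSC~\eqref{e:Abound}, by $\sqrt{m'+1}\,\|(-\gensy)^{\half} u^{\eps,n}_{m'+1}\|\,\|(-\gensy)^{\half} u^{\eps,n}_{m'}\|$, so the net growth in the chaos level is $(m')^{2k-\half}$. Applying Cauchy--Schwarz in $m'$ followed by Young's inequality with a small parameter $\delta$ absorbs $\delta\|\cN^k(-\gensy)^{\half} u^{\eps,n}\|^2$ onto the left-hand side and leaves a remainder $\|\cN^{k-\half}(-\gensy)^{\half} u^{\eps,n}\|^2$; a second Cauchy--Schwarz/Young interpolation chaos-by-chaos controls the latter by $\|\cN^{k-1}(-\gensy)^{\half} u^{\eps,n}\|^2$. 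The source term is handled analogously: Cauchy--Schwarz plus the finite chaos support of $g$ give
\begin{equation*}
|\langle \cN^{2k}u^{\eps,n},g\rangle|\le\delta\|\cN^k(-\gensy)^{\half} u^{\eps,n}\|^2+C_{k,m,\delta}\|(-\gensy)^{-\half}g\|^2.
\end{equation*}
Induction on $k$ then yields~\eqref{e:AprioriMain}.

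The \emph{main obstacle} is the bookkeeping in the commutator step: the GSC contributes a $\sqrt{\cN}$ factor which, combined with the $(m'+1)^{2k-1}$ coming from the telescoped commutator, produces total growth $(m')^{2k-\half}$ in the chaos level. This is \emph{just} below the target $(m')^{2k}$, giving exactly the half-power margin needed for Young's inequality to absorb the top-order term; had the GSC held only with a $\cN^{\beta}$ factor for $\beta\ge 1$, the scheme would break down, and indeed this sublinear $\beta=\half$ of Lemma~\ref{l:GenaBound} is what makes the induction possible. The $\eps$-uniformity of~\eqref{e:Abound} (which ultimately traces back to the explicit bounds~\eqref{e:CrucialBound2}--\eqref{e:CrucialBound3+}) gives $\eps$-independent constants throughout, and the truncation at level $n$ enters only through orthogonal projections whose norms do not appear in the final estimate, delivering $C=C(m,k)$ as required.
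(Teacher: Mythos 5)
Your proof is correct and follows essentially the same strategy as the paper: test the truncated equation against a $\cN^{2k}$-weighted version of $u^{\eps,n}$, telescope the antisymmetric contribution into $\sum_{j}\bigl(j^{2k}-(j-1)^{2k}\bigr)\langle u^{\eps,n}_{j},\genap u^{\eps,n}_{j-1}\rangle$, and close the estimate via the Graded Sector Condition of Lemma~\ref{l:GenaBound}, whose $\sqrt{\cN}$ growth provides exactly the half-power margin you need. The only organizational difference is that the paper dispenses with your induction on $k$ and the two nested Young interpolations by testing instead against $(a\,I+\cN^{2k})u^{\eps,n}$ for a sufficiently large constant $a=a(k)$: the extra $a$ lets one absorb the commutator term in a single pass by choosing $a$ so that $1+a/j^{2k}-C/\sqrt j\ge 1/2$ uniformly in the chaos level $j$, which is a slightly slicker way of packaging the same estimates.
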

\begin{proof}
The proof follows closely that of \cite[Lemma 2.5]{LY} (see also~\cite[Proposition 2.8]{CGT}) and 
crucially uses the antisymmetry of the operator $\gena$ in~\eqref{e:gena}, its decomposition $\gena_\pm$ 
in~\eqref{e:gen} and the fact that it maps $\fock_j$ into $\fock_{j+1}\oplus\fock_{j-1}$. 
For the sake of completeness, we provide the details below. 

As $n$ and $\eps$ are fixed throughout, we will write $u$ and $u_j,\,j=1,\dots,n$ in place of 
$u^{\eps,n}$ and $u^{\eps,n}_j$, respectively. 
Also, by convention, we let $u_{n+1}=0=u_{i-1}=\dots=u_1$. To denote constants which do not depend on $\eps,n$ or $g$ 
we will use $C$, and such $C$ might change from line to line. 

Let us test the $j$-th component, $j\geq i$, with $\gen u$, which, since by Lemma~\ref{lem:essid} 
$(\genap)^\ast=-\genam$, gives
  \begin{equs}
    \langle u_j,\gen u\rangle&= \langle u_j, \cL_0 u_j\rangle+\langle u_j,\genap u_{j-1}\rangle+\langle u_j,\genam u_{j+1}\rangle\\
    &=  \langle u_j, \cL_0 u_j\rangle- \Big[\langle u_{j+1},\genap u_j\rangle-\langle u_{j},\genap u_{j-1}\rangle\Big].
  \end{equs}
Via a summation by parts, for any $a>0$, we have
  \begin{equs}
    \sum_{j=i}^n (a+j^{2k})\langle u_j,(-\cL_0) u_j\rangle=  &\sum_{j=i}^n (a+j^{2k})\langle u_j,(-\gen) u\rangle\\
    &+\sum_{j=i}^n (j^{2k}-(j-1)^{2k})\langle u_j,\genap u_{j-1}\rangle.
  \end{equs}
  Next, note that $-\gen u =g-\genap u_n-\genam u_i$ so that, by orthogonality of Fock spaces with different indices,
  \begin{equ}
    |\langle u_j,(-\gen) u\rangle|= |\langle u_j,g_j\rangle|\le \frac12 \langle u_j,(-\cL_0)u_j\rangle+\frac12\langle g_j,(-\cL_0)^{-1}g_j\rangle.
  \end{equ}
  As a consequence, there exists some finite strictly positive constant $C=C(m,k)$, 
  which might change from line to line,  such that
  \begin{equs}[eq:freddo]
\sum_{j=i}^n &(a+j^{2k})\langle u_j,(-\cL_0) u_j\rangle\\
&\le C\Big(\|(-\cL_0)^{-\frac12}g\|^2+\sum_{j=i}^n (j^{2k}-(j-1)^{2k})\langle u_j,\genap  u_{j-1}\rangle\Big)\\
&\leq C\Big(\|(-\cL_0)^{-\frac12}g\|^2+\sum_{j=i}^n j^{2k-1}\langle u_j,\genap  u_{j-1}\rangle \Big)   
  \end{equs}
where the constant $ C= C(k)$ was 
chosen in such a way that $(j^{2k}-(j-1)^{2k})\sqrt j\le  C j^{2k-\frac12}$. 
To handle the last sum, we bound
  \begin{equs}[e:Gammaaa]
    |\langle u_j,\genap  u_{j-1}\rangle |&\le \|(-\cL_0)^{\frac12} u_j\| \| (-\cL_0)^{-\frac12}\genap  u_{j-1}\|\\
    &\leq C\sqrt{j} \|(-\cL_0)^{\frac12} u_j\| \| (-\cL_0)^{\frac12} u_{j-1}\|\\
   &\leq C\sqrt{j}\Big(\frac12 \|(-\cL_0)^{\frac12} u_j\|^2+\frac12\| (-\cL_0)^{\frac12} u_{j-1}\|^2\Big)
  \end{equs}
where in the second bound we used~\eqref{e:Abound} 
in Lemma~\ref{l:GenaBound}. We now plug the result into~\eqref{eq:freddo} and rearrange the terms, 
so that we conclude 
\begin{equ}\label{eq:freddo2}
\sum_{j=i}^n j^{2k}\Big(1+\frac a{j^{2k}}-\frac{C}{\sqrt j}\Big)\langle u_j,(-\cL_0) u_j\rangle\le C(1+\|(-\cL_0)^{-\frac12}g\|^2)\,. \end{equ}
Choosing $a$ sufficiently large, 
in such a way that $(1+\frac a{j^{2k}}-\frac{C}{\sqrt j})\ge 1/2$ for every $j\ge2$,~\eqref{e:AprioriMain} follows.
\end{proof}

At this point, we want to gather further properties of the solution to~\eqref{eq:GenEq} 
but for this we will adopt very different strategies in $d=2$ and $d\geq 3$. 

\subsection{The critical dimension $d=2$: the Replacement Lemma}

\label{sec:d2}
To analyse the critical dimension, we take a closer look at~\eqref{e:SystemTrunc} and 
elaborate on the observation made in~\cite[Section 2]{Landim2004} which we now explain. 

For simplicity, assume $g\in\fock_i$. As $\gensy$ is 
invertible on $\fock_j$ for all $j\geq 1$, $u^{\eps,n}_n$ can be written in terms of $u^{\eps,n}_{n-1}$ as 
\begin{equ}
u^{\eps,n}_n=(-\gensy)^{-1}\genap u^{\eps,n}_{n-1}=(-\gensy+\cH_0^\eps)^{-1}\genap u^{\eps,n}_{n-1}\,,
\end{equ}
where $\cH_0^\eps$ is the operator identically equal to $0$. Upon plugging 
the above expression into the projection of~\eqref{eq:TruncGenEq} on $\fock_{n-1}$, we get 
\begin{equ}
-\gensy u^{\eps,n}_{n-1} {\color{blue}-\genam (-\gensy+\cH_0^\eps)^{-1}\genap} u^{\eps,n}_{n-1}=\genap u^{\eps,n}_{n-2}\,.
\end{equ}
Now, the operator evidenced in blue is non-negative, self-adjoint and maps each $\fock_j$ to itself (i.e. its diagonal 
in the chaos), since by Lemma~\ref{lem:essid}, 
for any $\psi\in\fock_j$, we have 
\begin{equs}
\langle {\color{blue}-\genam (-\gensy+\cH_0^\eps)^{-1}\genap} \psi, \psi\rangle&=\langle \psi, {\color{blue}-\genam (-\gensy+\cH_0^\eps)^{-1}\genap}\psi\rangle\\
&=\|(-\gensy-\cH^\eps_0)^{-\half}\genap\psi\|^2\geq 0\,.
\end{equs}
As a consequence, $-\gensy-{\color{blue}\genam (-\gensy+\cH_0^\eps)^{-1}\genap}$ is invertible itself and we can 
express $u^{\eps,n}_{n-1}$ via
\begin{equ}
u^{\eps,n}_{n-1}=(-\gensy-{\color{blue}\genam (-\gensy+\cH_0^\eps)^{-1}\genap})^{-1}\genap u^{\eps,n}_{n-2}=(-\gensy+{\color{blue}\cH_1^\eps})^{-1}\genap u^{\eps,n}_{n-2}\,.
\end{equ}
By iterating the above procedure, we obtain the recursive expression for the components 
of $u^{\eps,n}$ given by 
\begin{equ}[e:SysDiff]
\begin{cases}
u^{\eps,n}_j=(-\gensy+\cH_{n-j}^\eps)^{-1}\genap u^{\eps,n}_{j-1}\,, &\text{for $j=i+1,\dots, n$}\\
u^{\eps,n}_i=(-\gensy+\cH_{n-i}^\eps)^{-1}g
\end{cases}
\end{equ}
where the operators $\cH^\eps_j$ are non-negative, self-adjoint and diagonal in the chaos 
(see~\cite[Lemma 3.2]{CETWeak}), 
and are inductively defined according to $\cH^\eps_0\equiv0$ and for $j\geq 1$
\begin{equ}[def:OpH]
\cH^\eps_{j+1}\eqdef-\genam(-\gensy-\cH^\eps_j)^{-1}\genap\,. 
\end{equ}
\medskip

Formally, one expects that, by taking $n\to\infty$, for every $j\in\N$, $u_j^{\eps,n}$ 
converges to the $j$-th component  
of the solution $u^\eps$ to~\eqref{eq:GenEq}. The latter should, heuristically, have the same structure as the right 
hand side of~\eqref{e:SysDiff}
but with the operator $\cH^\eps_{n-j}$ replaced by the operator $\cH^\eps$ given by the fixed point of the 
relation in~\eqref{def:OpH}, i.e. $\cH^\eps$ should satisfy
\begin{equ}[e:FPO]
\cH^\eps = -\genam(\gensy+\cH^\eps)^{-1}\genap\,.
\end{equ} 
Indeed, if such $\cH^\eps$ existed, were non-negative, self-adjoint and diagonal in chaos, 
then $z^\eps\in\oplus_{j\geq i}\fock_j$ given by  
\begin{equ}[e:Idealz]
z^\eps=(-\gensy+\cH^\eps)^{-1}\genap z^\eps+ (-\gensy+\cH^\eps)^{-1}g
\end{equ}
would be a solution to~\eqref{eq:GenEq} (actually, {\it the} solution since the equation is linear). 
To see this, note first that, since $\genap$ increases the chaos by $1$, 
$z^\eps$ is defined recursively and therefore it is well-defined. Then,  
\begin{equs}
-\gen_{\geq i} z^\eps&=\Big[(-\gensy -\cH^\eps)z^\eps-\genap z^\eps\Big]-P_{\geq i}\genam P_{\geq i} z^\eps +\cH^\eps z^\eps\\
&=g - \genam P_{\geq i+1} \Big[(-\gensy+\cH^\eps)^{-1}\genap z^\eps+ (-\gensy+\cH^\eps)^{-1}g\Big] +\cH^\eps z^\eps\\
&=g + \Big[- \genam(-\gensy+\cH^\eps)^{-1}\genap +\cH^\eps\Big] z^\eps =g
\end{equs}
where in the first step we used~\eqref{e:Idealz}, in the second that $g\in\fock_i$ and in the last~\eqref{e:FPO}, 
so that the claim follows. 
\medskip

That said, it is difficult to directly study $\cH^\eps$ and, in particular, to accurately describe
its behaviour for $\eps$ small. The idea then, is to derive an 
{\it approximate fixed point} for~\eqref{e:FPO} and use the latter to define 
our family of observables. This is the content of the next subsection. 

\subsubsection{The Replacement Lemma and Equation}

The existence and properties of such an approximate fixed point are detailed 
in the so-called {\it Replacement Lemma}, which is one of the main contributions of our work. 
 Before stating it, we need to introduce some notation.  
 For $\eps>0$, let $\Ll$ be the function defined on $[\half,\infty)$ as 
\begin{equ}[e:L]
\Ll(x)\eqdef \lambda_\eps^2 \log \left(1+\frac{1}{\eps^2 x}\right)\,.
\end{equ}
%where $\fc$ is the constant given in~\eqref{}. 
Further, let $G$ be the function
\begin{equ}
  \label{eq:defG}
G(x)\eqdef \frac1{|\fw|^2}\left[\left(\frac{3|\fw|^2}{2\pi} x+1\right)^{\frac{2}{3}}-1\right],
\end{equ}
and $\cGN$ be the operator on $\fock$ given by 
\begin{equ}[e:G]
\cGN\eqdef G(\Ll(-\gensy))
\end{equ}
which means that for $\psi\in\fock_n$, $n\in\N$, the action of $\cGN$ on $\psi$ is 
\begin{equ}
\cF(\cGN\psi)(k_{1:n})\eqdef G(\Ll(\tfrac12|k_{1:n}|^2))\hat\psi(k_{1:n})\,,\qquad k_{1:n}\in\Z^{2n}_0\,.
\end{equ}

\begin{lemma}[Replacement Lemma]\label{l:Replacement}
There exists a constant $C>0$ such that for every $\psi_1,\psi_2\in\fock$ we have 
\begin{equs}\label{e:Replacement}
|\langle \big[-\genam(-\gensy-\gensy^\fw\cGN)^{-1}&\genap+\gensy^\fw\cGN\big]\psi_1,\psi_2\rangle|\\
&\leq C\lambda_\eps^2 \|\cN(-\gensy)^{\half}\psi_1\|\|\cN(-\gensy)^{\half}\psi_2\|
\end{equs}
where $\cGN$ and $\gensy^\fw$ are the operators defined respectively according to~\eqref{e:G} and~\eqref{eq:defD}. 
\end{lemma}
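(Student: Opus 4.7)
The plan is to expand the bilinear form on the left of~\eqref{e:Replacement} in Fourier variables, split the resulting sum into a \emph{diagonal} piece that acts as multiplication in Fourier on $\fock$ and an \emph{off-diagonal} remainder, bound the remainder via the graded sector condition of Lemma~\ref{l:GenaBound}, and then match the diagonal multiplier to the symbol of $-\gensy^\fw\cGN$ using an angular--radial computation that closes precisely thanks to an ODE satisfied by the explicit function $G$ of~\eqref{eq:defG}. Concretely, since $\genam=-(\genap)^*$ and $T\eqdef(-\gensy-\gensy^\fw\cGN)^{-1}$ is a positive Fourier multiplier (both $\gensy$ and $\gensy^\fw\cGN$ commute, as $\cGN$ is a function of $-\gensy$), I would rewrite $\langle\psi_2,[-\genam T\genap]\psi_1\rangle=\langle\genap\psi_2,T\genap\psi_1\rangle$ and expand both copies of $\genap$ via Lemma~\ref{lem:essid}; the resulting double sum over pairs $(i<j)$ and $(i'<j')$ from the $n+1$ Fourier arguments splits, after relabelling, into diagonal terms $(i,j)=(i',j')$ that collapse into multiplication on $\fock_n$ by
\[
m^\eps(k_{1:n})\eqdef\frac{2\lambda_\eps^2}{(2\pi)^d}\sum_{r=1}^n(\fw\cdot k_r)^2\sum_{\ell+m=k_r}\frac{\indN{\ell,m}}{M(\ell,m,k_{\{1:n\}\setminus\{r\}})},
\]
where $M(p_{1:n+1})=\tfrac12|p_{1:n+1}|^2+\tfrac12\sum_i(\fw\cdot p_i)^2G(\Ll(\tfrac12|p_{1:n+1}|^2))$, plus a strictly off-diagonal remainder.

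For the off-diagonal remainder I would use the operator inequality $T\le(-\gensy)^{-1}$ (available because $-\gensy^\fw\cGN\ge0$) and Cauchy--Schwarz to reduce to estimating the off-diagonal part of $\langle\genap\psi_2,(-\gensy)^{-1}\genap\psi_1\rangle$; every such pairing forces at least one external momentum into a merger, and the momentum bound~\eqref{e:CrucialBound2} underpinning the graded sector condition then supplies the extra $\lambda_\eps^2$ factor -- in dimension $d=2$ this is exactly the factor that, in the diagonal contribution, is cancelled by the logarithmic divergence of the internal $\ell$-sum, since $\lambda_\eps^2|\log\eps|\sim1$. Combined with Lemma~\ref{l:GenaBound}, this yields the desired bound $C\lambda_\eps^2\|\cN(-\gensy)^{1/2}\psi_1\|\|\cN(-\gensy)^{1/2}\psi_2\|$. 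For the diagonal multiplier, working in the regime $|\ell|\gg|k_{\mathrm{rest}}|$ (so that $m=k_r-\ell\approx-\ell$) gives $M(\ell,m,k_{\mathrm{rest}})\simeq|\ell|^2(1+(\fw\cdot\hat\ell)^2 G(\Ll(|\ell|^2)))$; passing to polar coordinates $\ell=r\hat\ell$, the change of variables $\rho=\Ll(r^2)$ (which in $d=2$ satisfies $dr/r\simeq-d\rho/(2\lambda_\eps^2)$) together with the angular identity $\int_0^{2\pi}\tfrac{d\theta}{2\pi}(1+A(\fw\cdot\hat\ell)^2)^{-1}=(1+A|\fw|^2)^{-1/2}$ reduce the inner sum to
\[
\tfrac12(\fw\cdot k_r)^2\int_0^{\rho_*}\frac{d\rho}{\pi\sqrt{1+|\fw|^2G(\rho)}},\qquad\rho_*=\Ll(\tfrac12|k_{1:n}|^2).
\]
Differentiating~\eqref{eq:defG} yields exactly $G'(\rho)=\pi^{-1}(1+|\fw|^2G(\rho))^{-1/2}$ with $G(0)=0$, so this integral equals $\tfrac12 G(\rho_*)$ and $m^\eps$ is recognised as the symbol of $-\gensy^\fw\cGN$; the precise formula~\eqref{eq:defG} is \emph{forced} by this matching, being the unique $G$ for which $\gensy^\fw\cGN$ is an approximate fixed point of the map $X\mapsto\genam(-\gensy-X)^{-1}\genap$.

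The hardest step will be quantifying the error in the diagonal identification uniformly in $n$ and with the correct number-operator weights. The approximation $M\simeq|\ell|^2(1+\cdots)$ fails when external momenta are comparable to $|\ell|$, and the Riemann-sum versus continuous-integral discrepancy contributes boundary terms of comparable size; both corrections must be absorbed into $\lambda_\eps^2\|\cN(-\gensy)^{1/2}\psi_1\|\|\cN(-\gensy)^{1/2}\psi_2\|$ by a careful tracking of the cutoffs in $\indN{\ell,m}$ together with a combinatorial bookkeeping of the $n$-dependence. This is essentially the continuous counterpart of the delicate estimate that powers Yau's logarithmic superdiffusivity result for 2D ASEP.
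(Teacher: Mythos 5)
Your proposal follows essentially the same route as the paper's sketch: expand the quadratic form, split the double sum over pairs $(i,j)$, $(i',j')$ into diagonal and off-diagonal parts, control the off-diagonal remainder by a Cauchy--Schwarz argument that inherits the number-operator growth from the graded sector condition, and identify the diagonal Fourier multiplier with the symbol of $-\gensy^\fw\cGN$ by passing to polar coordinates, performing the angular integral, and closing the loop with the ODE that $G$ satisfies. One small bonus: you correctly write the ODE as $G'=\pi^{-1}(1+|\fw|^2 G)^{-1/2}$, with the square root, whereas display~\eqref{e:ODE} in these notes omits it (your version is the one consistent with the preceding integral identity and with the explicit formula~\eqref{eq:defG}).
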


Before proving the statement, let us make a few remarks and see how to use the Replacement Lemma 
to define a family of observables 
which satisfy conditions~\eqref{e:L2}-\eqref{e:Diffusivity} in Theorem~\ref{thm:FD}. 

An immediate corollary of~\eqref{e:Replacement} is that  
\begin{equ}
\| (-\gensy)^{-\half} \left[-\genam(-\gensy-\gensy^\fw\cGN)^{-1}\genap+\gensy^\fw\cGN\right]\ (-\gensy)^{-\half} \rVert_{\fock_n \rightarrow \fock_n}^2 \lesssim \lambda_\eps^2 n
\end{equ}
which, since we assume $\lambda_\eps$ to be chosen according to weak coupling, i.e. as in~\eqref{eq:lambdaeps}, 
implies that for $n$ fixed, the left hand side vanishes as $\eps\rightarrow0$. 
In other words, on any given chaos, $\gensy^\fw\cG^\eps$ is an {\it approximate fixed point} to~\eqref{e:FPO} 
uniformly over $\eps$. 
As we will see, this is enough for our purposes and, in line with the heuristic $z^\eps$ in~\eqref{e:Idealz} 
(but with the choice of $g=\genap f$), 
we can give the definition of the family of observables we will be considering 
hereafter. 

\begin{definition}[Replacement Equation]\label{def:ReplEq}
For $\eps>0$, $i\leq n\in\N$ and $f\in\fock_{i-1}$, we define $\tvN\in \bigoplus_{j=i}^n\fock_j$ to be the solution 
of the {\it replacement equation} with input $f$, which is given by 
\begin{equ}[e:nonlinAnsatz]
\tvN= (-\gensy-\gensy^\fw\cGN)^{-1}\genap P_{i}^{n-1} \tvN +(-\gensy-\gensy^\fw\cGN)^{-1}\genap\gf\,,
\end{equ}
with the convention that for all $j\leq i$, $\tvN_j=0$. 
\end{definition}

The equation~\eqref{e:nonlinAnsatz} morally corresponds to a truncated version of~\eqref{e:Idealz} 
but in which the mysterious operator $\cH^\eps$ is replaced by a fully explicit one, i.e.  
$-\gensy^\fw\cG^\eps$. Further, the equation admits a unique solution 
since it is triangular and can be explicitly solved starting from 
$\tvN_{1}=\dots=\tvN_{i-1}=0$, and then inductively 
setting 
\begin{equ}
\tvN_{j}\eqdef (-\gensy-\gensy^\fw\cGN)^{-1}\genap \tvN_{j-1} +(-\gensy-\gensy^\fw\cGN)^{-1}\genap\gf\,,\qquad j\geq i\,.
\end{equ}
Our goal in the next subsection is to prove that indeed the solution to the replacement equation~\eqref{e:nonlinAnsatz} 
satisfies~\eqref{e:L2}-\eqref{e:Diffusivity} but before that we want to sketch the proof of Lemma~\ref{l:Replacement}.

\begin{proof}[of Lemma~\ref{l:Replacement} (Sketch and derivation of $G$)]
The proof of the Replacement Lemma is mainly technical and can be found in~\cite[Lemma 2.5]{CGT}. 
For the sake of these notes, we will present the main ideas behind it and provide a heuristic justification 
as to where the specific $G$ in~\eqref{eq:defG} comes about. 

Notice first that, for any $n\in\N$, the operator $\genam(-\gensy-\gensy^\fw\cGN)^{-1}\genap$ maps $\fock_n$ 
into itself, so that, to establish~\eqref{e:Replacement}, it suffices to consider $\psi_1,\psi_2\in\fock_n$. 
For simplicity, let us take $\psi_1=\psi_2=\psi$, and consider 
\begin{equ}[e:ScalProd]
  \langle -\genam(-\gensy-\gensy^\fw\cGN)^{-1}\genap\psi,\psi\rangle = \| (-\gensy-\gensy^\fw\cGN)^{-\half}\genap\psi\|^2\,.
\end{equ}
Let $\cS^\eps \eqdef (-\gensy-\gensy^\fw\cGN)^{-1}$ and 
denote by $\sigma^\eps =(\sigma^\eps _n)_{n\ge1}$ its Fourier multiplier, i.e. for $\psi\in\Gamma L^2_n$ 
$\cF(\cS^\eps  \psi)(k_{1:n})=\sigma^\eps _n(k_{1:n})\hat \psi(k_{1:n})$ where 
\begin{equs}[e:sigman]
  \sigma^\eps _n(k_{1:n})&=\frac2{|k_{1:n}|^2+(\fw\cdot k)_{1:n}^2G(\Ll(\frac12|k_{1:n}|^2))}\,,
\end{equs}
and $G,\Ll$ are as in~\eqref{e:L} and~\eqref{eq:defG} respectively. 
From~\eqref{e:gen}, we deduce that the right hand side of~\eqref{e:ScalProd} equals 
\begin{equs}
(n+1)!&\sum_{k_{1:n+1}}\sigma^\eps _{n+1}(k_{1:n+1})\times\\
\times&\frac{\lambda_\eps^2}{\pi^2(n+1)^2}\Big|\sum_{1\le i<j\le n+1} \,[\fw\cdot (k_i+k_j)]\indN{k_i,k_j}\hat \psi(k_i+k_j,k_{\{1:n+1\}\setminus\{i,j\}})\Big|^2\,.
\end{equs}
Now, expanding the square we obtain a sum over $4$ indices, $i,j, i'$ and $j'$ with $i<j$ and $i'<j'$, 
which we split into a {\it diagonal part}, corresponding to the choice $i=i'$ and $j=j'$, 
and an {\it off-diagonal part}, in which instead the cardinality of $\{i,j\}\cap\{i',j'\}$ is less or equal to $1$. 
We will neglect the off-diagonal part as, via a clever Cauchy-Schwarz inequality 
(see the proof of Lemma 2.5 in~\cite{CGT}), it can be shown to be bounded precisely by the right hand side 
of~\eqref{e:Replacement} and is that responsible for the growth in the number operator. 
The diagonal part instead is given by 
\begin{equs}
(n+1)!&\sum_{k_{1:n+1}}\sigma^\eps _{n+1}(k_{1:n+1})\times\\
\times&\frac{\lambda_\eps^2}{\pi^2(n+1)^2}\sum_{1\le i<j\le n+1} [\fw\cdot (k_i+k_j)]^2\indN{k_i,k_j}|\hat \psi(k_i+k_j,k_{\{1:n+1\}\setminus\{i,j\}})|^2\\
=n! n&\sum_{k_{1:n}}\half(\fw\cdot k_1)^2|\hat \psi(k_{1:n})|^2\left[ \frac{\lambda_\eps^2}{\pi^2}\sum_{\ell+m=k_1}\indN{\ell,m}\sigma^\eps _{n+1}(\ell,m,k_{2:n})\right]\label{e:Weird}
\end{equs}
where we applied a simple change of variables. What we immediately see is that the quantity in square brackets 
is a given function of $k_{1:n}$ independent of $\hat\psi$. 
Let us denote it by $\Psi^\eps$, i.e. 
\begin{equ}[e:Sum]
\Psi^\eps(k_{1:n})\eqdef \frac{\lambda_\eps^2}{\pi^2}\sum_{\ell+m=k_1}\indN{\ell,m}\sigma^\eps _{n+1}(\ell,m,k_{2:n})\,.
\end{equ} 
Before proceeding, notice also that the second term at the left hand side of~\eqref{e:Replacement} is 
\begin{equs}
 \langle \gensy^\fw\cGN\psi,\psi\rangle&=n!\sum_{k_{1:n}}\half (\fw\cdot k)^2_{1:n}G(\Ll(\tfrac12|k_{1:n}|^2))|\hat \psi(k_{1:n})|^2\\
 &=n! n \sum_{k_{1:n}}\half (\fw\cdot k_1)^2G(\Ll(\tfrac12|k_{1:n}|^2))|\hat \psi(k_{1:n})|^2\label{e:Stanco}
\end{equs}
where we used the symmetry of both $\hat\psi$ and $k_{1:n}\mapsto G(\Ll(\tfrac12|k_{1:n}|^2))$. 
By comparing~\eqref{e:Weird} and~\eqref{e:Stanco}, we deduce that these two terms are close 
provided that
\begin{equ}[e:MainRepl]
\sup_{k_{1:n}}|\Psi^\eps(k_{1:n})-G(\Ll(\tfrac12|k_{1:n}|^2))|\lesssim \lambda_\eps^2\,,
\end{equ}  
whose proof is given in detail in~\cite[Appendix A]{CGT}. 

We do not want to repeat here the whole argument since it is mainly technical. 
The idea is to perform a series of approximations of $\Psi^\eps$ making sure that the error made 
is of order $\lambda_\eps^2$, 
and thus vanishes as $\eps\to0$. 
We will split the rest of the proof in two steps. The first provides some insight over the type of 
approximations required and which {\it make no use of the explicit form of the function $G$}. 
The second instead is devoted to the derivation $G$, whose expression results 
from the solution of an ODE.  
For simplicity, we take $n$ in~\eqref{e:Sum} to be equal to $1$. 
\medskip

\noindent{\it Step 1: Approximation.} Consider
\begin{equs}
\Psi^\eps(k)&=\frac{\lambda_\eps^2}{\pi^2}\sum_{\ell+m=k}\indN{\ell,m}\sigma^\eps _{2}(\ell,m)\\
&=\frac{\lambda_\eps^2}{\pi^2} \sum_{\ell+m=k} \frac{\indN{\ell,m}}{\frac{|\ell|^2+|m|^2}{2} +\frac{(\fw\cdot\ell)^2+(\fw\cdot m)^2}{2}G(\Ll(\frac{|\ell|^2+|m|^2}{2}))}\,, 
\end{equs}
and note that  
\begin{equs}
\Psi^\eps(k)&\approx \frac{\lambda_\eps^2}{\pi^2}\sum_{\ell+m=k} \frac{\indN{\ell,m}}{|\ell|^2+\frac{|k|^2}{2} +(\fw\cdot\ell)^2G(\Ll(|\ell|^2+\frac{|k|^2}{2}))}\\
&\approx\frac{\lambda_\eps^2}{\pi^2}\sum_{\eps\leq |\eps\ell|\leq 1} \eps^2 \frac{1}{|\eps\ell|^2+\frac{|\eps k|^2}{2} +(\fw\cdot (\eps\ell))^2G(\Ll(\eps^{-2}(|\eps\ell|^2+\frac{|\eps k|^2}{2}))}\\
&\approx \frac{\lambda_\eps^2}{\pi^2} \int_{|x|\leq 1}\frac{\dd x}{|x|^2+\frac{|\eps k|^2}{2} +(\fw\cdot x)^2G(\Ll(\eps^{-2}(|x|^2+\frac{|\eps k|^2}{2}))}
\end{equs}
where in the first step we replaced 
$\tfrac12(|\ell|^2+|m|^2)\mapsto |\ell|^2+\tfrac12|k|^2$, 
$\tfrac12((\fw\cdot\ell)^2+(\fw\cdot m)^2)\mapsto (\fw\cdot\ell)^2+\tfrac12(\fw\cdot k)^2\mapsto (\fw\cdot\ell)^2$, 
in the second we removed $\indN{\ell,m}$ and added the condition $1\leq|\ell|\leq\eps^{-1}$ and in the last 
we performed a Riemann-sum approximation. 
At this point, set $\alpha_\eps\eqdef\tfrac12|\eps k|^2$ and pass to polar coordinates, 
i.e. define $x= r v_{\theta_x}$ and $\fw=|\fw| v_{\theta_\fw}$ 
for $v_\theta=(\cos\theta, \sin\theta)$, which gives 
\begin{equs}
&\frac{\lambda_\eps^2}{\pi^2}\int_0^{2\pi}\dd \theta\int_{0}^1\frac{r\dd r}{r^2+\alpha_\eps+r^2(\fw\cdot v_\theta)^2G(\Ll(\eps^{-2}(r^2+\alpha_\eps)))}\\
&=\frac{\lambda_\eps^2}{2\pi^2}\int_0^{2\pi}\dd \theta\int_{0}^1\frac{\dd r}{r+\alpha_\eps+r(\fw\cdot v_\theta)^2G(\Ll(\eps^{-2}(r+\alpha_\eps)))}\\
&=\frac{\lambda_\eps^2}{2\pi^2}\int_0^{2\pi}\dd \theta\int_{0}^1\frac{\dd r}{r+\alpha_\eps+r|\fw|^2 \cos^2(\theta-\theta_w)G(\Ll(\eps^{-2}(r+\alpha_\eps)))}\\
&=\frac{\lambda_\eps^2}{\pi^2}\int_{0}^1\dd r\int_0^{\pi}\frac{\dd \theta}{r+\alpha_\eps+r|\fw|^2 \cos^2(\theta)G(\Ll(\eps^{-2}(r+\alpha_\eps)))}\\
&\approx \frac{\lambda_\eps^2}{\pi^2}\int_{0}^1\int_0^{\pi}\frac{\dd \theta\dd r}{(r+\alpha_\eps)(r+\alpha_\eps+1)[1+|\fw|^2 \cos^2(\theta)G(\Ll(\eps^{-2}(r+\alpha_\eps)))]}
\end{equs}
the second to last step being a consequence of the periodicity of the integrand in $\theta$. 
We are left with analysing the last integral for which we perform the change of variables 
\begin{equs}
y= \Ll(\eps^{-2}(r+\alpha_\eps))&=\lambda_\eps^2\log\Big(1+\frac1{r+\alpha_\eps}\Big), \\
\frac{\dd r}{(r+\alpha_\eps)(1+r+\alpha_\eps)}&=- \lambda_\eps^{-2} \dd y
\end{equs}
and conclude that it equals
\begin{equs}
&\frac{1}{\pi^2}\int_0^{\pi}\dd \theta \int_{\Ll(\eps^{-2}+\frac12|k|^2)}^{\Ll(\frac12|k|^2)}\frac{\dd y}{1+|\fw|^2 \cos^2(\theta)G(y)}\\
&\approx \frac{1}{\pi^2}\int_0^{\pi}\dd \theta \int_{0}^{\Ll(\frac12|k|^2)}\frac{\dd y}{1+|\fw|^2 \cos^2(\theta)G(y)}=\frac{1}{\pi}\int_{0}^{\Ll(\frac12|k|^2)}\frac{\dd y}{\sqrt{1+|\fw|^2G(y)}}
\end{equs}
where in the last step we computed the integral in $\theta$ exactly. 
\medskip

\noindent{\it Step 2: An ODE for G.} Summarising, in the previous step we have shown that
\begin{equ}
\Psi^\eps(k)\approx \frac{1}{\pi}\int_{0}^{\Ll(\frac12|k|^2)}\frac{\dd y}{\sqrt{1+|\fw|^2G(y)}}\,.
\end{equ}
For~\eqref{e:MainRepl} to hold, we need to control 
\begin{equ}
|\Psi^\eps(k)-G(\Ll(\tfrac12|k|^2))|\approx\Big| \frac{1}{\pi}\int_{0}^{\Ll(\frac12|k|^2)}\frac{\dd y}{\sqrt{1+|\fw|^2G(y)}}- G(\Ll(\tfrac12|k|^2)) \Big|
\end{equ}
uniformly over $|k|$. Both summands are functions of $\Ll(\frac12|k|^2)$ which is bounded below by $0$ and above by $1$, 
uniformly over $k$ and $\eps$. 
Therefore,~\eqref{e:MainRepl} follows if we choose $G$ such that, for all $t\in[0,1]$, 
\begin{equ}
G(t)=\frac{1}{\pi}\int_{0}^{t}\frac{\dd y}{\sqrt{1+|\fw|^2G(y)}}
\end{equ}
which is equivalent to 
\begin{equ}[e:ODE]
\dot{G}=\frac{1}{\pi(1+|\fw|^2G)}\,,\qquad G(0)=0\,.
\end{equ}
The ODE in~\eqref{e:ODE} has an explicit solution given by~\eqref{eq:defG}. Hence, 
the choice of $G$ is justified and the proof of the statement is concluded. 
\end{proof}

\subsubsection{Estimates on the Replacement Equation}

We now collect all the estimates needed to show that~\eqref{e:L2},~\eqref{e:ApproxH1} and~\eqref{e:Diffusivity} hold 
for $\tvN$, and conclude the section by proving Theorem~\ref{thm:FD}. 
Let us begin with the first two, for which we will first need to specialise the a priori estimates in 
Proposition~\ref{P:AprioriGeneratorEq} to $\tvN$. 

\begin{lemma}\label{l:aprioriAnsatz}
For $i\in\{2,3\}$, $f\in\fock_{i-1}$ and $n\in\N$, $n\geq 3$, let $\tvN$ be the solution of the replacement equation 
with input $f$ as in Definition~\ref{def:ReplEq}. 
Then, for any $k\in\N$ there exists a constant $C>0$ and $\eps(n,k)>0$, both independent of $f$, 
such that for every $\eps<\eps(n,k)$
\begin{equ}[e:AprioriAnsatz]
\|\cN^k(-\gensy)^{\half}\tvN\|\leq C\|(-\gensy)^{\half}\gf\|\,.
\end{equ}
\end{lemma}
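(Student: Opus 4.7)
The plan is to run a weighted energy estimate directly on the chaos components of $\tvN$, exploiting the fact that the replacement equation has a much simpler, purely triangular structure than the truncated generator equation: crucially, there is no $\genam$ term coupling consecutive chaos levels. First I would rewrite~\eqref{e:nonlinAnsatz} component-wise as the recursion
\[-(\gensy+\gensy^\fw\cGN)\tvN_j=\genap\tvN_{j-1}\quad\text{for } j=i+1,\dots,n,\]
with base case $-(\gensy+\gensy^\fw\cGN)\tvN_i=\genap\gf$, and observe that $\gensy^\fw\cGN$ is diagonal in chaos and non-positive as an operator (since $\gensy^\fw\leq0$ while $\cGN\geq0$), so that $-\gensy-\gensy^\fw\cGN\geq -\gensy\geq0$ as quadratic forms. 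A preliminary check I would carry out along the way is that $\cGN$ is uniformly bounded on $\bigoplus_{j\leq n}\fock_j$ for $\eps$ small enough, using the form of $\Ll$ in~\eqref{e:L}: this is precisely what determines the threshold $\eps(n,k)$ and guarantees the invertibility used to define $\tvN$.

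The heart of the argument would then be to test the $j$-th equation against $\tvN_j$ and use positivity of $\gensy^\fw\cGN$ to obtain $\|(-\gensy)^{\half}\tvN_j\|^2\leq\langle\tvN_j,\genap\tvN_{j-1}\rangle$, then apply Cauchy--Schwarz together with the graded sector condition~\eqref{e:Abound}. Since $\tvN_{j-1}\in\fock_{j-1}$, the latter reads $\|(-\gensy)^{-\half}\genap\tvN_{j-1}\|\leq C\sqrt{j-1}\,\|(-\gensy)^{\half}\tvN_{j-1}\|$, which immediately gives
\[\|(-\gensy)^{\half}\tvN_j\|\leq C\sqrt{j-1}\,\|(-\gensy)^{\half}\tvN_{j-1}\|\quad(j>i),\]
together with the analogous base-case estimate $\|(-\gensy)^{\half}\tvN_i\|\leq C\sqrt{i-1}\,\|(-\gensy)^{\half}\gf\|$. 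Iterating these one-step bounds down to the bottom rung yields a factorial-type estimate of the form
\[\|(-\gensy)^{\half}\tvN_j\|^2\lesssim C^{2(j-i+1)}\,\frac{(j-1)!}{(i-2)!}\,\|(-\gensy)^{\half}\gf\|^2,\]
and summing against $j^{2k}$ over $j=i,\dots,n$ delivers~\eqref{e:AprioriAnsatz} with a constant $C=C(n,k)$ independent of $\gf$.

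The main obstacle, I expect, is more psychological than substantial. One might be tempted to replicate the finer summation-by-parts argument of Proposition~\ref{P:AprioriGeneratorEq}, with weights $a+j^{2k}$ and the sharp discrete derivative $j^{2k}-(j-1)^{2k}\sim j^{2k-1}$, so as to produce an $n$-uniform constant. This seems unnecessary here: because the replacement equation contains no $\genam$ coupling, the crude recursive bound above already suffices, and the resulting $n$-dependence is harmless in the subsequent applications, where it is compensated by the smallness $\lambda_\eps^2$ produced by the Replacement Lemma~\eqref{e:Replacement} when checking the remaining conditions~\eqref{e:ApproxH1}--\eqref{e:Diffusivity}. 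The only genuinely delicate preliminary point is therefore the uniform boundedness of $\cGN$, which forces $\eps$ to lie below an $(n,k)$-dependent threshold so that $\Ll(-\gensy)$ takes values in a compact subset of $[0,1]$ on which $G$ is bounded.
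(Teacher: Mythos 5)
Your recursion is algebraically correct (testing the $j$-th component of the replacement equation against $\tvN_j$, using positivity of $-\gensy^\fw\cGN$, Cauchy--Schwarz and the GSC), but the constant it produces grows like $C^{n}\sqrt{(n-1)!}$, and you then explicitly accept a constant $C=C(n,k)$. That is a genuine gap, not a psychological one: the constant in \eqref{e:AprioriAnsatz} \emph{must} be independent of $n$ for the subsequent applications to close. In the proof of \eqref{e:ApproxH1d=2} in Proposition~\ref{p:FD1} one uses the present lemma to conclude $\sqrt{n}\,\|(-\gensy)^{\half}\tvN_n\|\lesssim n^{-k}\|(-\gensy)^{\half}\gf\|$ with an implied constant depending only on $k$; since the lemma is applied at chaos level $n$, one writes $\|(-\gensy)^{\half}\tvN_n\|\leq C\, n^{-k'}\|(-\gensy)^{\half}\gf\|$ for some $k'$, and any $n$-dependence of $C$ faster than a (fixed) power of $n$ destroys the $n^{-k}$ decay. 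This $n^{-k}$ decay is precisely what makes $\lim_{n\to\infty}\limsup_{\eps\to0}$ in \eqref{e:ApproxH1} vanish; a factorially growing $C(n,k)$ cannot be absorbed by shrinking $\eps(n,k)$, because the $n$-limit is taken \emph{after} the $\eps$-limit.

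The paper obtains $n$-uniformity by a completely different mechanism, and this is where the Replacement Lemma actually enters. For $k=0$, one does \emph{not} iterate down the chaos ladder: one tests the full replacement equation against $\tvN$ as a whole, isolates the term $\big[-\genam(-\gensy-\gensy^\fw\cGN)^{-1}\genap+\gensy^\fw\cGN\big]P_i^{n-1}\tvN$, and uses Lemma~\ref{l:Replacement} to bound it by $C\lambda_\eps^2 n^2\|(-\gensy)^{\half}\tvN\|^2$, which can be absorbed for $\eps<\eps(n,0)$ with $Cn^2\lambda_\eps^2<\half$. This yields an $n$-uniform constant with no factorial blow-up. For $k>0$, the strategy is to compare $\tvN$ with the solution $\vN$ of the truncated generator equation with source $\genap\gf$: Proposition~\ref{P:AprioriGeneratorEq} already gives the $n$-uniform weighted bound for $\vN$ (via the delicate summation-by-parts that you propose to skip), and one then shows $\|(-\gensy)^{\half}(\tvN-\vN)\|^2\leq C(n^{-j}+n^2\lambda_\eps^2)\|(-\gensy)^{\half}\gf\|^2$ using again the Replacement Lemma. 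In short: the triangular structure of the replacement equation does make it solvable, but a naive forward iteration is uniformly bounded on each chaos level and hopelessly lossy in aggregate; the $n$-uniformity has to come either from the sharp weighted summation-by-parts (as in Proposition~\ref{P:AprioriGeneratorEq}) or from the $\lambda_\eps^2$-smallness supplied by the Replacement Lemma, and your proposal invokes neither at the point where it matters.
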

\begin{proof}
Throughout the proof $i$ is fixed.
 
We first prove the result for $k=0$. Let us look at the way in which the generator $\gen$ acts on $\tvN$. We have
\begin{equs}
-\gen \tvN=&(-\gensy-\gensy^\fw\cGN)\tvN-\genap  \tvN-\genam \tvN+\gensy^\fw\cGN \tvN\\
=&\genap\gf -\genam (-\gensy-\gensy^\fw\cGN)^{-1}\genap\gf  -\genap \tvN_n+\gensy^\fw\cGN \tvN_n\\
&+\big[-\genam(-\gensy-\gensy^\fw\cGN)^{-1}\genap+\gensy^\fw\cGN\big]P_{i}^{n-1} \tvN\label{e:genAnsatz}
\end{equs}
where in the last step we used~\eqref{e:nonlinAnsatz}. 
Now, $\genap \tvN_n\in\fock_{n+1}$, so that, since $ \tvN\in\oplus_{j=i}^n\fock_j$, 
the two are orthogonal. For the same reason, also $\tvN$ and 
$\genam (-\gensy-\gensy^\fw\cGN)^{-1}\genap\gf\in\fock_{i-1} $ 
are orthogonal. Therefore, by testing both sides of~\eqref{e:genAnsatz} by $ \tvN$, we obtain
\begin{equs}\label{e:genAnsatz2}
\|(-\gensy)^{\half} \tvN\|^2&=\langle  \tvN, \genap\gf\rangle+\langle  \tvN_n, \gensy^\fw\cGN \tvN_n\rangle\\
&\quad+\langle  \tvN, \big[-\genam(-\gensy-\gensy^\fw\cGN)^{-1}\genap+\gensy^\fw\cGN\big]P_{i}^{n-1} \tvN\rangle\,.
\end{equs}
Now, we bound the first term by Cauchy-Schwarz and neglect the second since it is negative, i.e. 
\begin{equs}
\langle  \tvN, \genap\gf\rangle&\leq \tfrac12\|(-\gensy)^{\half} \tvN\|^2+\tfrac12\|(-\gensy)^{-\half}\genap\gf\|^2\\
\langle  \tvN_n, \gensy^\fw\cGN \tvN_n\rangle&=-\|(-\gensy^\fw\cGN)^{\half} \tvN_n\|^2\leq 0\,.
\end{equs}
For the third, we use the Replacement Lemma~\ref{l:Replacement}, so that, overall~\eqref{e:genAnsatz2} becomes
\begin{equs}
\|(-\gensy)^{\half} \tvN\|^2&\leq \tfrac12\|(-\gensy)^{-\half}\genap\gf\|^2+(\tfrac12 + Cn^2\lambda_\eps^2)\|(-\gensy)^{\half} \tvN\|^2\\
&\leq C'\||(-\gensy)^{\half}\gf\|^2+(\tfrac12 + Cn^2\lambda_\eps^2)\|(-\gensy)^{\half} \tvN\|^2
\end{equs}
where in the last step we used~\eqref{e:Abound} with $\sigma=+$. 
At this point,~\eqref{e:AprioriAnsatz} for $k=0$ follows upon choosing $\eps(n,0)$ in such a way that 
$C n^2\lambda_{\eps(n,0)}^2<\half$. 
\medskip

We now turn to $k>0$. Let $\vN$ be the solution to~\eqref{eq:TruncGenEq} with $\psi\eqdef \genap\gf$. 
Then, 
\begin{equ}[e:TwoSum]
\|\cN^k(-\gensy)^{\half}\tvN\|\leq n^k\|(-\gensy)^{\half}(\tvN-\vN)\|+\|\cN^k(-\gensy)^{\half}\vN\|\,.
\end{equ}
In view of Proposition~\ref{P:AprioriGeneratorEq}, the second summand is bounded by (a constant times) 
$\|(-\gensy)^{-\half}\genap f\|$ which, by~\eqref{e:Abound}, is controlled by $\|(-\gensy)^{\half} f\|$. 
For the first, we claim that for any $j$ there exists $C>0$ such that for $\eps$ small enough, we have
\begin{equ}[e:AprioriDiff]
\|(-\gensy)^{\half}(\tvN-\vN)\|^2\leq C\big( n^{-j} +n^2\lambda_\eps^2\big)\|(-\gensy)^{\half}\gf\|^2\,.
\end{equ}
Assuming the claim, the proof is concluded as it suffices to choose in~\eqref{e:AprioriDiff} 
$j=k$ and $\eps_2(n,k)>0$ such that $n^{k+2}\lambda_{\eps_2(n,k)}<1$, 
to ensure that also the first summand in~\eqref{e:TwoSum} is bounded. 

\newcommand{\pes}{\psi_\eps^\sharp}
We now prove~\eqref{e:AprioriDiff}. 
To shorten the notation, set $\pes\eqdef (-\gensy-\gensy^\fw\cGN)^{-1}\genap\gf$. 
We begin by evaluating $-\gen$ on $\tvN-\vN$. 
To do so, we exploit~\eqref{e:genAnsatz} and the fact that, as noted in the proof of Proposition~\ref{P:AprioriGeneratorEq}, 
since $\vN$ solves~\eqref{eq:TruncGenEq}, we have 
\begin{equ}%[e:genu]
-\gen \vN-\genap \gf+\genam \vN_{i}%% =-\genn u^{\eps,n}-\genap u^{\eps,n}_n-\psi=
=  -\genap \vN_n\,.
\end{equ}
This means that 
\begin{equs}\label{e:genDiff}
-\gen (\tvN-\vN)=&-\genam (\pes-\vN_{i})-\genap(\tvN_n-\vN_n)+\gensy^\fw\cGN \tvN_n\\
&+\big[-\genam(-\gensy-\gensy^\fw\cGN)^{-1}\genap+\gensy^\fw\cGN\big]P_{i}^{n-1} \tvN\,.
\end{equs}
Now, since $\pes\in\fock_2$, $\genam (\pes-\vN_{i})\in\fock_i$ and 
$\genap(\vN_n-\tvN_n)\in\fock_{n+1}$, they are orthogonal 
to both $\tvN$ and $\vN$ as these belong to $\oplus_{j=i}^n\fock_j$. 
Hence, testing both sides of~\eqref{e:genDiff} by $\tvN-\vN$, we obtain
\begin{equs}\label{e:aprioriDiff1}
\|(-\gensy)^{\half}&(\tvN-\vN)\|^2=\langle \tvN-\vN, \gensy^\fw\cGN \tvN_n\rangle\\
&+\langle \tvN-\vN, \big[-\genam(-\gensy-\gensy^\fw\cGN)^{-1}\genap+\gensy^\fw\cGN\big]P_{i}^{n-1}\tvN\rangle\,.
\end{equs}
Let us analyse the two terms at the right hand side separately. 
For the first, note that the operator $\gensy^\fw\cGN$ is negative, so that 
\begin{equs}
\langle \tvN-\vN, \gensy^\fw\cGN\tvN_n\rangle&=\langle \vN_n, \gensy^\fw\cGN\tvN_n\rangle -\langle \tvN_n, (-\gensy^\fw\cGN)\tvN_n\rangle\\
&\leq \langle -\vN_n, (-\gensy^\fw\cGN)\tvN_n\rangle\\
&\lesssim \|(-\gensy)^{\half}\vN_n\|\|(-\gensy)^{\half}\tvN_n\|
\end{equs}
and in the last bound we used that $-\gensy^\fw\cGN\lesssim -\gensy$ and that $\gensy, \gensy^\fw$ and $\cGN$ commute. 
Now, the a priori estimates in Proposition~\ref{P:AprioriGeneratorEq} and 
Lemma~\ref{l:aprioriAnsatz} for $k=0$ allow to upper bound the previous by
\begin{equs}
 n^{-k} \|(-\gensy)^{\half}\tvN\|\|\cN^k(-\gensy)^{\half}\vN_n\|&\lesssim n^{-k} \|(-\gensy)^{-\half}\genap f \|^2\\
 &\lesssim n^{-k} \|(-\gensy)^{\half} f \|^2
\end{equs}
where, once again, the last step follows by~\eqref{e:Abound}. 
For the second term in~\eqref{e:aprioriDiff1}, we apply the Replacement Lemma~\ref{l:Replacement} 
first and the same a priori estimates as above, thus getting
\begin{equs}
\langle \tvN-\vN, &\big[\genam(-\gensy-\gensy^\fw\cGN)^{-1}\genap+\gensy^\fw\cGN\big]P_{i}^{n-1}\tvN\rangle\\
&\leq C \lambda_\eps^2n^2\|(-\gensy)^{\half}(\tvN-\vN)\|\|(-\gensy)^{\half}\tvN\|\\
&\leq \tfrac12C\lambda_\eps^2n^2\Big(\|(-\gensy)^{\half}(\tvN-\vN)\|^2+\|(-\gensy)^{\half}\tvN\|^2\Big)\\
&\leq \tfrac12C\lambda_\eps^2n^2\Big(\|(-\gensy)^{\half}(\tvN-\vN)\|^2+\|(-\gensy)^{-\half}\genap\gf\|^2\Big)\\
&\leq \tfrac12C\lambda_\eps^2n^2\Big(\|(-\gensy)^{\half}(\tvN-\vN)\|^2+\|(-\gensy)^{\half}\gf\|^2\Big)\,,
\end{equs}
where the constant $C$ changed in the last two lines. 
Now, by using that for $\eps<\eps(n,k)$, 
$C n^2\lambda_\eps^2<1$, we see that~\eqref{e:aprioriDiff1} is bounded by  
\begin{equ}
\|(-\gensy)^{\half}(\tvN-\vN)\|^2 \leq \tfrac12 C\lambda_\eps^2n^2\|(-\gensy)^{\half}(\tvN-\vN)\|^2 + C' \|(-\gensy)^{\half}\gf\|^2
\end{equ}
for some constant $C'>0$, from which~\eqref{e:AprioriDiff} follows at once. 
\end{proof}

We are now ready to state and prove the proposition from which we will deduce~\eqref{e:ApproxH1} and~\eqref{e:L2}. 

\begin{proposition}\label{p:FD1}
For $i\in\{2,3\}$, $f\in\fock_{i-1}$ and $n\in\N$, $n\geq 3$, let $\tvN$ be the solution of the replacement equation 
with input $f$ as in Definition~\ref{def:ReplEq}. 
Then, for any $k\in\N$, there exists a constant $C>0$, depending only on $k$, 
and $\eps(n,k)>0$ such that for any $\eps<\eps(n,k)$ the following estimates hold
\begin{equ}
\|(-\gensy)^{-\frac12}(-\gen \tvN-\genap\gf+\genam \tvN_{i})\|\leq \frac{C}{n^{k}}\|(-\gensy)^{\frac{1}{2}}\gf\|\,,\label{e:ApproxH1d=2}
\end{equ}
and 
\begin{equ}[e:ApproxL2d=2]
\|\tvN\|\leq C \lambda_\eps\|(-\gensy)^{\tfrac{1}{2}}\gf\|\,.
\end{equ}
\end{proposition}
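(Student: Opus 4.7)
The plan is to start from the Replacement Equation and rearrange it so as to isolate two sources of error: a boundary piece at chaos $n$, and a bulk piece measuring how much $\gensy^\fw\cGN$ fails to be an exact fixed point of $\cH\mapsto -\genam(-\gensy-\cH)^{-1}\genap$. Setting $\cS^\eps\eqdef(-\gensy-\gensy^\fw\cGN)^{-1}$ and applying $-\gen=-\gensy-\genap-\genam$ to $\tvN$, while using the relation $(-\gensy-\gensy^\fw\cGN)\tvN=\genap P_i^{n-1}\tvN+\genap\gf$ together with the recursion $\tvN_j=\cS^\eps\genap\tvN_{j-1}$ for $j\ge i+1$ built into Definition~\ref{def:ReplEq} (which implies $\genam P_{i+1}^n\tvN=\genam\cS^\eps\genap P_i^{n-1}\tvN$), a short rearrangement produces the key identity
\begin{equation*}
-\gen\tvN-\genap\gf+\genam\tvN_i=-\genap\tvN_n+\gensy^\fw\cGN\tvN_n+\bigl[\gensy^\fw\cGN-\genam\cS^\eps\genap\bigr]P_i^{n-1}\tvN.
\end{equation*}

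To prove~\eqref{e:ApproxH1d=2}, I would bound each summand in $(-\gensy)^{-1/2}$-norm. For the boundary terms, the graded sector condition (Lemma~\ref{l:GenaBound}) gives $\|(-\gensy)^{-1/2}\genap\tvN_n\|\lesssim\sqrt{n}\,\|(-\gensy)^{1/2}\tvN_n\|$, while a pointwise Fourier comparison (using $(\fw\cdot k)^2\le|\fw|^2|k|^2$ and boundedness of $G$) gives $\|(-\gensy)^{-1/2}\gensy^\fw\cGN\tvN_n\|\lesssim|\fw|^2\|(-\gensy)^{1/2}\tvN_n\|$. Both are then converted to $n^{-k}$ decay via Lemma~\ref{l:aprioriAnsatz}, since $\|(-\gensy)^{1/2}\tvN_n\|\le n^{-k}\|\cN^k(-\gensy)^{1/2}\tvN\|\lesssim n^{-k}\|(-\gensy)^{1/2}\gf\|$. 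The third term is where Lemma~\ref{l:Replacement} enters: by duality, its $(-\gensy)^{-1/2}$-norm equals $\sup_{\|\rho\|=1}\langle(-\gensy)^{-1/2}\rho,[\gensy^\fw\cGN-\genam\cS^\eps\genap]P_i^{n-1}\tvN\rangle$, which Lemma~\ref{l:Replacement} combined with Lemma~\ref{l:aprioriAnsatz} bounds by $C\lambda_\eps^2 n^2\|(-\gensy)^{1/2}\gf\|$; choosing $\eps(n,k)$ so that $\lambda_\eps^2 n^{k+2}\le 1$ turns this into the required $n^{-k}$ decay.

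For~\eqref{e:ApproxL2d=2}, I would exploit the explicit recursion $\tvN_i=\cS^\eps\genap\gf$ and $\tvN_j=\cS^\eps\genap\tvN_{j-1}$ for $j>i$ provided by Definition~\ref{def:ReplEq}. Since $\cS^\eps$ and $(-\gensy)^{-1}$ commute and $\cS^\eps\le(-\gensy)^{-1}$ as positive self-adjoint operators (because $\gensy^\fw\cGN\le 0$), one gets $\|\tvN_j\|\le\|(-\gensy)^{-1}\genap\tvN_{j-1}\|$. A direct Fourier computation—in which the $\lambda_\eps^2$ factor is inherited from the prefactor in~\eqref{e:gen} and the decisive estimate is the $d=2$ convergent sum $\sum_\ell|\ell|^{-4}<\infty$, in contrast to the logarithmically divergent $\sum_\ell|\ell|^{-2}$ appearing in~\eqref{e:CrucialBound2}—yields $\|(-\gensy)^{-1}\genap\psi\|^2\lesssim j^2\lambda_\eps^2\,\|(-\gensy)^{1/2}\psi\|^2$ for $\psi\in\fock_j$. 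Combined with the pointwise a priori bound $\|(-\gensy)^{1/2}\tvN_j\|\le j^{-k}\|\cN^k(-\gensy)^{1/2}\tvN\|\lesssim j^{-k}\|(-\gensy)^{1/2}\gf\|$ from Lemma~\ref{l:aprioriAnsatz}, this gives $\|\tvN_{j+1}\|^2\lesssim j^{2-2k}\lambda_\eps^2\|(-\gensy)^{1/2}\gf\|^2$, and for $k\ge 2$ the series $\sum_{j\ge i}j^{2-2k}$ converges uniformly in $n$, producing the desired bound with constant depending only on $k$.

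The hardest part is the careful bookkeeping of factors of $n$: in~\eqref{e:ApproxH1d=2}, the Replacement Lemma only yields a $\lambda_\eps^2$ improvement, so the $n^2$ loss must be absorbed by taking $\eps(n,k)$ polynomially small in $n$; in~\eqref{e:ApproxL2d=2}, the subtle point is to extract precisely one power of $\lambda_\eps$ from the direct Fourier estimate, since the generic graded sector bound on $\genap$ does not carry this factor and one must instead exploit the improved $d=2$ summability of $|\ell|^{-4}$.
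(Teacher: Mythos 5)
Your proof is correct and follows essentially the same route as the paper: the same identity expressing $-\gen\tvN-\genap\gf+\genam\tvN_i$ as a boundary term at chaos $n$ plus a Replacement-Lemma error, the same applications of Lemmas~\ref{l:GenaBound}, \ref{l:Replacement} and~\ref{l:aprioriAnsatz}, and the same decisive $d=2$ observation that $\sum_\ell |\ell|^{-4}<\infty$. The only cosmetic deviation is in your treatment of~\eqref{e:ApproxL2d=2}, where you iterate chaos-by-chaos and sum a convergent series in $j$ (which forces you to take $k\ge2$ in Lemma~\ref{l:aprioriAnsatz}), whereas the paper applies the single bound $\|(-\gensy-\gensy^\fw\cGN)^{-1}\genap\psi\|^2\lesssim\lambda_\eps^2\|\cN(-\gensy)^{\half}\psi\|^2$ in one shot to the whole vector $\psi=\tvN+\gf$ and then invokes Lemma~\ref{l:aprioriAnsatz} with $k=1$.
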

\begin{proof}
Let us begin with~\eqref{e:ApproxH1d=2}. 
Notice that, by the recursive definition of $\tvN$, $\tvN_{i-1}=0$ and therefore
\begin{equ}
\tvN_{i}=(-\gensy-\gensy^\fw\cGN)^{-1}\genap\gf\,.
\end{equ}
Hence, by~\eqref{e:genAnsatz}, we see that 
\begin{equs}
-\gen \tvN-\genap\gf+\genam \tvN_{i}=&-\gen \tvN-\genap\gf+\genam (-\gensy-\gensy^\fw\cGN)^{-1}\genap\gf\\
=&-\genap \tvN_n+\gensy^\fw\cGN \tvN_n\\
&+\big[-\genam(-\gensy-\gensy^\fw\cGN)^{-1}\genap+\gensy^\fw\cGN\big]P_i^{n-1} \tvN\,.
\end{equs} 
Now, for the first two terms we use Lemma~\ref{l:GenaBound} and $-\gensy^\fw\cGN\lesssim-\gensy$, 
to obtain
\begin{equs}
\|(-\gensy)^{-\half}[-\genap \tvN_n+\gensy^\fw\cGN \tvN_n]\|&\lesssim \sqrt{n}\|(-\gensy)^{\half}\tvN_n\|\\
&\lesssim n^{-k}\|(-\gensy)^{\half}\gf\|
\end{equs}
where the last step follows by Lemma~\ref{l:aprioriAnsatz}. 
For the third term, we note that 
the $\fock$-norm satisfies a variational formulation, i.e.
\begin{equs}
\|&(-\gensy)^{-\half}\big[-\genam(-\gensy-\gensy^\fw\cGN)^{-1}\genap+\gensy^\fw\cGN\big]P_i^{n-1} \tvN\|\\
&=\sup_{\|\psi\|=1}\langle\psi, (-\gensy)^{-\half}\big[-\genam(-\gensy-\gensy^\fw\cGN)^{-1}\genap+\gensy^\fw\cGN\big]P_i^{n-1} \tvN\rangle\\
&=\sup_{\|\psi\|=1}\langle(-\gensy)^{-\half}\psi, \big[-\genam(-\gensy-\gensy^\fw\cGN)^{-1}\genap+\gensy^\fw\cGN\big]P_i^{n-1} \tvN\rangle\\
&\lesssim C \lambda_\eps^2 \sup_{\|\psi\|=1}\|\psi\|\|\cN^2(-\gensy)^{\half}\tvN\|=C \lambda_\eps^2\|\cN^2(-\gensy)^{\half}\tvN\|\\
&\leq C \lambda_\eps^2n^{2}\|(-\gensy)^{\half}\gf\|\label{e:VarRepl}
\end{equs}
where in the third step we used the Replacement Lemma~\ref{l:Replacement} and in the last, 
Lemma~\ref{l:aprioriAnsatz}. By choosing $\eps(n,k)>0$ such that $\lambda_{\eps_2(n,k)}^2n^{2+k}<1$,~\eqref{e:ApproxH1} follows at once. 
\medskip

We now turn to~\eqref{e:ApproxL2d=2}. Note that for any $j\in\N$ and $\psi\in\fock_j$, 
\begin{equs}
\|(-\gensy-&\gensy^\fw\cGN)^{-1}\genap\psi\|^2\\
&\lesssim  j^2\lambda_\eps^2\sum_{k_{1:j+1}}\frac{\indN{k_1,k_2}(\fw\cdot (k_1+k_2))^2|\hat\psi(k_1+k_2, k_{3:j+1})|^2}{[\frac12 |k_{1:j+1}|^2+\frac12(\fw\cdot k)^2_{1:j+1}G(\Ll(\frac12|k_{1:j+1}|^2))]^2}\\
&\lesssim j^2\lambda_\eps^2\sum_{k_{1:j}}(\fw\cdot k_1)^2|\hat\psi(k_{1:j})|^2\sum_{\ell+m=k_1}\frac{\indN{\ell,m}}{ (|\ell|^2+|m|^2)^2}\label{e:ContoL2}
\end{equs}
Now, the inner sum is clearly finite, and therefore we deduce 
%\begin{equs}
%\lambda_\eps^2\sum_{\ell+m=k_1}\frac{\indN{\ell,m}}{ (|\ell|^2+|m|^2)^2}&\lesssim\lambda_\eps^2\eps^4\sum_{\eps<|\eps\ell|<1}\frac{1}{ |\eps\ell|^4}\lesssim \lambda_\eps^2 \eps^2\int_{|x|\in[\eps,1]}\frac{\dd x}{|x|^4}\\
%&\lesssim\lambda_\eps^2\eps^2\int_\eps r^{-3}\dd r\lesssim \lambda_\eps^2\,.
%\end{equs}
%As a consequence, we obtain
\begin{equ}[e:L2control]
\|(-\gensy-\gensy^\fw\cGN)^{-1}\genap\psi\|^2\lesssim\lambda_\eps^2\|\cN (-\gensy)^{\half}\psi\|^2\,.
\end{equ}
We apply the previous estimate to the definition of $\tvN$, so that 
we obtain
\begin{equ}[e:uffa]
\|\tvN\|^2\leq \|(-\gensy-\gensy^\fw\cGN)^{-1}\genap [\tvN +\gf]\|^2\lesssim \lambda_\eps^2\|\cN(-\gensy)^{\half} [\tvN +\gf]\|^2\,.
\end{equ}
By the weighted a priori estimate on $\tvN$ in Lemma~\ref{l:aprioriAnsatz}, 
we see that the norm at the right hand side of~\eqref{e:uffa} is bounded uniformly in $\eps$ 
so that~\eqref{e:ApproxL2d=2} is proven. 
%Now, since $\lambda_\eps$ goes to $0$ as $\eps\to 0$,~\eqref{e:L2} follows. 
\end{proof}

We can now turn to the proof of~\eqref{e:Diffusivity}, which is a very easy consequence of the replacement 
lemma. 

\begin{proposition}\label{p:Diffusivityd=2}
For $i\in\{2,3\}$, $f\in\fock_{i-1}$ and $n\in\N$, $n\geq 3$, let $\tvN$ be the solution of the replacement equation 
with input $f$ as in Definition~\ref{def:ReplEq}. 
Let $\cD= D_\SHE \gensy^\fw$ be the operator in~\eqref{eq:defD} with $D_\SHE$ given by 
\begin{equ}[e:DSHEd=2]
D_\SHE\eqdef G(1)\,,
\end{equ}
and $G$ the function in~\eqref{eq:defG}.
Then, 
\begin{equ}[e:Diffusivityd=2]
\lim_{\eps\to 0} \|(-\gensy)^{-\half}[\genam\tvN_{i}-\cD\gf]\|=0\,.
\end{equ}
%which in particular implies that~\eqref{e:Diffusivity} holds for $i=1,2$. 
\end{proposition}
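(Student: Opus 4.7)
The plan is to reduce \eqref{e:Diffusivityd=2} to two pieces: an algebraic error between $\genam\tvN_i$ and $\gensy^\fw\cGN \gf$, controlled by the Replacement Lemma, and a purely analytic error between $\gensy^\fw\cGN\gf$ and $D_\SHE\gensy^\fw\gf$, controlled by the fact that the Fourier multiplier $\Ll(-\gensy)$ tends to $1$ mode by mode. First, since $\tvN_{i-1}=0$ by the convention in Definition~\ref{def:ReplEq}, the recursion determining $\tvN_i$ collapses to
\[
\tvN_i=(-\gensy-\gensy^\fw\cGN)^{-1}\genap\gf,
\]
so I would write the target quantity as
\[
\genam\tvN_i-\cD\gf=\bigl[\genam(-\gensy-\gensy^\fw\cGN)^{-1}\genap\gf-\gensy^\fw\cGN\gf\bigr]+\gensy^\fw\bigl(\cGN-G(1)\bigr)\gf,
\]
where I used $\cD=G(1)\gensy^\fw$. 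Both summands live in $\fock_{i-1}$.

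For the first bracket, I would apply the Replacement Lemma with $\psi_1=\gf$. Using the variational characterisation
\[
\|(-\gensy)^{-\half}h\|=\sup_{g\neq 0}\frac{|\langle h,g\rangle|}{\|(-\gensy)^{\half}g\|}
\]
together with the fact that $h$ lies in a single chaos $\fock_{i-1}$, so that the supremum may be restricted to $g\in\fock_{i-1}$ where $\cN$ acts as multiplication by $i-1$, Lemma~\ref{l:Replacement} immediately yields
\[
\|(-\gensy)^{-\half}\bigl[\genam(-\gensy-\gensy^\fw\cGN)^{-1}\genap\gf-\gensy^\fw\cGN\gf\bigr]\|\lesssim (i-1)^2\lambda_\eps^2\,\|(-\gensy)^{\half}\gf\|,
\]
which vanishes as $\eps\to 0$ because $\lambda_\eps^2=1/\log\eps^{-2}$ and $\gf$ is smooth.

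For the second bracket, I would expand the $(-\gensy)^{-\half}\fock$-norm in Fourier modes and bound $(\fw\cdot k)_{1:i-1}^2\le|\fw|^2|k_{1:i-1}|^2$ to obtain
\[
\|(-\gensy)^{-\half}\gensy^\fw(\cGN-G(1))\gf\|^2\lesssim |\fw|^4\,(i-1)!\sum_{k_{1:i-1}}|k_{1:i-1}|^2\bigl[G(\Ll(\tfrac12|k_{1:i-1}|^2))-G(1)\bigr]^2|\hat\gf(k_{1:i-1})|^2.
\]
For each fixed $k_{1:i-1}\in(\Z_0^2)^{i-1}$ the elementary estimate
\[
\Ll(\tfrac12|k_{1:i-1}|^2)=\lambda_\eps^2\bigl[\log(1+\eps^2\tfrac12|k_{1:i-1}|^2)+\log\eps^{-2}-\log\tfrac12|k_{1:i-1}|^2\bigr]
\]
shows that $\Ll(\tfrac12|k_{1:i-1}|^2)\to 1$ as $\eps\to 0$, while $\Ll$ is bounded uniformly in $\eps$ on the range of relevant momenta; since $G$ is continuous, the integrand converges pointwise to zero and is dominated by $C|k_{1:i-1}|^2|\hat\gf(k_{1:i-1})|^2$, which is summable by the explicit form of $\gf$ in \eqref{eq:kernelsF}. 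Dominated convergence closes the bound and, combined with the first step, delivers \eqref{e:Diffusivityd=2}. The only real point of care is identifying $D_\SHE=G(1)$, which is dictated precisely by the limiting value of $\Ll$; once this identification is made, no further delicate analysis is needed because the hard work of producing the operator $\cGN$ has already been carried out inside Lemma~\ref{l:Replacement}.
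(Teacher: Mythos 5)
Your decomposition into the Replacement Lemma error and the analytic error $\gensy^\fw(\cGN-G(1))\gf$, together with the variational bound for the former and the pointwise-plus-dominated-convergence argument for the latter, is precisely the paper's proof. The only cosmetic difference is that you state the dominated convergence step more carefully (with the explicit bound $(\fw\cdot k)^2_{1:i-1}\le|\fw|^2|k_{1:i-1}|^2$ and the uniform boundedness of $\Ll$), whereas the paper writes out the Fourier sum only for $i=2$ and compresses the domination argument into ``we can take the limit inside the sum''.
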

\begin{proof}
The recursive definition of $\tvN$ ensures that 
\begin{equ}
\genam\tvN_{i}=\genam (-\gensy-\gensy^\fw\cGN)^{-1}\genap\gf\,,
\end{equ}
hence we can write
\begin{equs}
\|(-\gensy)^{-\half}[ &\genam\tvN_{i}-\cD\gf\|\\
\leq&\|(-\gensy)^{-\half}\big[\genam (-\gensy-\gensy^\fw\cGN)^{-1}\genap-\gensy^\fw\cGN\big]\gf\| \\
&+\|(-\gensy)^{-\half}\big[\gensy^\fw\cGN-\cD\big]\gf\|\,.
\end{equs}
For the first term at the right hand side, we use the same variational principle as in~\eqref{e:VarRepl} so that 
\begin{equs}
\|(-\gensy)^{-\half}\big[&-\genam(-\gensy-\gensy^\fw\cGN)^{-1}\genap+\gensy^\fw\cGN\big]\gf\|\\
&=\sup_{\|\psi\|=1}\langle\psi, (-\gensy)^{-\half}\big[-\genam(-\gensy-\gensy^\fw\cGN)^{-1}\genap+\gensy^\fw\cGN\big] \gf\rangle\\
&\lesssim  \lambda_\eps^2 \sup_{\|\psi\|=1}\|\psi\|\|(-\gensy)^{\half}\gf\|= \lambda_\eps^2\|(-\gensy)^{\half}\gf\|
\end{equs}
where in the last step we used the Replacement Lemma~\ref{l:Replacement}. 
For the second, we use the explicit expression of $\gensy^\fw$, $\cGN$ and $\cD$, which give
\begin{equs}
\|(-\gensy)^{-\half}\big[\gensy^\fw\cGN-\cD\big]\gf\|^2=\tfrac12\sum_k \frac{(\fw\cdot k)^4}{|k|^2}\Big[G(\Ll(\tfrac12|k|^2))- D_\SHE\Big]^2|\hat\phi(k)|^2\,.
\end{equs} 
Since for both $i=1$ and $2$, $\gf$ is smooth we can take the limit in $\eps$ inside the sum and, provided we 
take $D_\SHE$ as in~\eqref{e:DSHEd=2}, the right hand side goes to $0$. 
Hence, the proof of~\eqref{e:Diffusivityd=2} is completed. 
\end{proof}

At last, we collect the results above and complete the proof of Theorem~\ref{thm:FD} for $d=2$.

\begin{proof}[of Theorem~\ref{thm:FD} for $d=2$]
For $\phi,\psi\in\cS(\T^2)$, let $f$ be either $f_1$ or $f_2$ in~\eqref{eq:kernelsF}. 
We choose $\wN$ to be $\tvN$, the solution of the replacement equation 
with input $f$ as in Definition~\ref{def:ReplEq}. 
Then,~\eqref{e:ApproxL2d=2} and~\eqref{e:ApproxH1d=2} in Proposition~\ref{p:FD1} 
respectively imply that~\eqref{e:L2} and~\eqref{e:ApproxH1} hold. The limit in~\eqref{e:Diffusivity} instead 
follows immediately by~\eqref{e:Diffusivityd=2} in Proposition~\ref{p:Diffusivityd=2}. 
Therefore the proof is concluded. 
\end{proof}

\subsection{The super-critical case: $d\geq 3$}

\label{sec:d3}
In this section, we focus on Theorem \ref{thm:FD} in the
super-critical dimensions $d\ge3$. Recall our goal: given the smooth test
functions $\phi,\psi$ we define the functions $f_j\in\fock_j$, $j=1,2$ according to~\eqref{eq:kernelsF}, i.e. as 
$f_1\eqdef \phi,f_2\eqdef [\phi\otimes \psi]_{sym}$, and for $i=2,3$, 
we want to show the existence of a function $w^{\eps,n}\in\oplus^n_{j=i}\fock_j$
(the inhomogeneous Fock space of index running from $i$ to $n$) such that
\eqref{e:L2}, \eqref{e:ApproxH1} and \eqref{e:Diffusivity} hold, for a
suitable choice of the constant $D_\SHE$ in the definition \eqref{eq:defD} of the
operator $\cD$.

For this purpose, we let $w=w^{\varepsilon,n}$ to be the solution of the truncated generator equation
\begin{equ}
  \label{e:vne}
  -\cL^\varepsilon_{i,n}w^{\varepsilon,n}=\genap f_{i-1}
\end{equ}
where we recall that $\gen_{i,n}=\gensy+\gena_{i,n}$ and $\gena_{i,n}\eqdef P_{i,n} \gena P_{i,n} $ 
is the antisymmetric part of the generator, projected on the chaos components from $i$ to $n$.
This choice is dictated by the heuristic argument in Section \ref{sec:FDT}, and in particular by \eqref{eq:rationale}.
Clearly, $w^{\varepsilon,n}$ depends also on the choice of $i=2,3$,
but we do not add an index $i$, because we reserve the notation $w^{\varepsilon,n}_i$ for
the component of $w^{\varepsilon,n}$ in the Fock space $\fock_i$.

The proof of \eqref{e:ApproxH1} is quite simple and is a  consequence of the general estimates that we have given above.
We note first of all that
\begin{equ}
  \label{e:vnebis}
  -\gen \wN-\genap f_{i-1}+\genam \wN_i=-\genap \wN_n,
\end{equ}
which simply comes from the definition \eqref{e:vne} of $w^{\varepsilon,n}$ and of $\gena_{i,n}$.
In fact,
\[
\gena=\gena_{i,n}-\bar P_{i,n} \gena \bar P_{i,n} +\bar P_{i,n} \gena+\gena \bar P_{i,n} 
\]
with $\bar P_{i,n} =1-P_{i,n} $, i.e., the orthogonal projection on $\oplus_{a\not\in\{i,\ldots,n\}}\fock_a$, so that for $i\le j\le n$
\[
  \gena \wN_j=\gena_{i,n}\wN_j +\genap \wN_j\, \1_{j=i}+\genam \wN_j\, \1_{j=n}
  \,.
\]
By Lemma \ref{l:GenaBound},
\begin{equ}
  \|(-\gensy)^{-\half}\genap \wN_n\|\lesssim \sqrt n\|(-\gensy)^{\half}\wN_n\|\le n^{\half-k}\|\cN^k(-\gensy)^{\half}\wN_n\|.
\end{equ}
At this point we apply
the 
apriori estimate of Proposition \ref{P:AprioriGeneratorEq}, which implies that the right hand side 
of the previous formula is upper bounded by a constant dependent only on $k$, times
\begin{equ}
  n^{\half-k}\|(-\gensy)^{-\half}\genap f_{i-1}\|\lesssim n^{\half-k}\|(-\gensy)^{\half}f_{i-1}\|,
\end{equ}
where in the last step we applied again Lemma \ref{l:GenaBound}. Since $f_{i-1}$ is smooth, the last norm is finite (and independent of $n,\eps$) and therefore, taking $k>\half$ and letting $n$ tend to infinity, \eqref{e:ApproxH1} follows.

The proof of \eqref{e:L2} and \eqref{e:Diffusivity} is more subtle but
also much more interesting.  Here, we give rather a heuristic
approach, that allows to introduce the main tools that are used in the
actual proof, and in Section \ref{sec:avoiding} we explain how the
rigorous proof (that can be found in full detail in \cite[Sections 2.3 and 2.4]{CGT}) works.

Let us start by looking at the $\Gamma L^2$-norm of $\wN$. By definition,
\begin{equ}\label{eq:wNdef}
\wN=(-\gensy-\gena_{i,n})^{-1}\genap f_{i-1}.  
\end{equ}
It is convenient to define
\begin{equs}[eq:T]
T^\eps_\pm&\eqdef (-\gensy)^{-\half}\gena_{\pm}(-\gensy)^{-\half}\\
  T^\eps_{i,n}&\eqdef (-\gensy)^{-\half}\gena_{i,n}(-\gensy)^{-\half}\\
    T^{\eps,\pm}_{i,n}&\eqdef (-\gensy)^{-\half}P_{i,n} \gena_{\pm}P_{i,n} (-\gensy)^{-\half}\,.
\end{equs}
Now, \emph{pretend} that the inverse that appears in the definition \eqref{eq:wNdef}  
can be expanded  in power series as
\begin{equs}[eq:powerseries]
  (-\gensy-\gena_{i,n})^{-1}&=(-\gensy)^{-\half}(I-T^\eps_{i,n})^{-1}(-\gensy)^{-\half}\\
  &=\sum_{j\ge0}(-\gensy)^{-\half}(T^\eps_{i,n})^j(-\gensy)^{-\half}. 
\end{equs}
The first is a rigorous identity but the second is only formal. 
Indeed, $\gena_{i,n}$ is \emph{not} negligible with respect to $\gensy$ as an
operator, uniformly in $\eps $ and $n$, not even when the vector $\fw\in \mathbb R^d$ 
in the definition of the non-linearity has very
small norm. In Section \ref{sec:avoiding}, we will explain how to derive a expansion 
which can be mathematically justified.
Let us look first at the  $0$-th term in the expansion for $\wN$, i.e.  
\begin{equ}
  \label{eq:0th}
(-\gensy)^{-1}\genap f_{i-1}  
\end{equ}
  and let us convince ourselves that this is small (as $\eps\to0$). 
The computation is analogous to that in~\eqref{e:ContoL2} without $\gensy^\fw\cG^\eps$, and 
for any $\psi\in\Gamma L^2_j$ with generic $j$, we get
  \begin{equs}
    \|(-\gensy)^{-1}\genap \psi\|^2\lesssim j! j^2\sum_{k_{1:j}}|k_{1:j}|^2|\hat \psi(k_{1:r})|^2\lambda_\eps^2\sum_{|\ell|\le \eps^{-1}}\frac1{(|\ell|^2+|m|^2)^2}\,
  \end{equs}
where the constant implicit in the bound is independent of $\eps,\psi,m$.
The latter sum is finite in dimension $d=3$, diverges like $|\log \eps|$ in dimension $d=4$ and as $\eps^{4-d}$ in dimension $d\ge5$. Therefore, 
  \begin{equ}
    \label{eq:operatorepiccolo}
    \|(-\gensy)^{-1}\genap \psi\|^2\lesssim \|\sqrt{\cN}(-\gensy)^{\half}\psi\|^2\times
    \begin{cases}
      \eps \text{ if } d=3\,,\\
      \eps^2|\log \eps| \text{ if } d=4\,,\\
\eps^2 \text{ if } d\ge5.
    \end{cases}
  \end{equ}
  As a consequence of \eqref{eq:operatorepiccolo} applied with $f_{i-1}$ in place of $\psi$, the norm of \eqref{eq:0th} converges to zero as $\eps\to0$. 

  \begin{remark}
    The inequality \eqref{eq:operatorepiccolo} should be compared with
    \eqref{e:Abound}: while the norm of
    $(-\gensy)^{-\half}\gena_\pm \psi$ is at most of the order of $\|\sqrt{\cN}(-\gensy)^{\half}\psi\|$, that
    of $(-\gensy)^{-1}\gena_+ \psi$ is negligible with respect it. 
%    We emphasise that \eqref{eq:operatorepiccolo}
%    \emph{does not} hold if $\genap$ is replaced by $\genam$, as the
%    reader can check that  via the following example. Let the vector $k$ be an element of the canonical basis of $\mathbb Z^d$ such that $\fw\cdot k\ne0$, and let
%    \[
%      \phi=\eps^{d/2+1}\sum_{0<|\ell|\le \eps^{-1}}[e_\ell\otimes e_{k-\ell}]_{sym}, 
%    \]
%with $ [e_\ell\otimes e_{k-\ell}]_{sym}$ defined as in~\eqref{eq:kernelsF}. 
%    Then, $\|\sqrt{\cN}(-\gensy)^{\half}\phi\|\lesssim 1$ but
%    \begin{eqnarray}
%      \genam \phi=const\times (\fw\cdot k)e_k \lambda_\eps \eps^{d/2+1}\sum_{\ell:|\ell|\le \eps^{-1}}1\approx (\fw\cdot k)e_k,
%    \end{eqnarray}
%    so that the norm $\|(-\gensy)^{-1}\genam \phi\|$ is also of order $1$.   
    \end{remark}
Next, let us look at the $j$-th order term in the expansion for $\wN$, with $j>0$. This is given by
  \begin{equation}
    \label{eq:jth}
    (-\gensy)^{-\half}(T^\eps_{i,n})^j(-\gensy)^{-\half}\genap f_{i-1}\,,%\\=(-\gensy)^{-1}\gena_{2,n}(-\gensy)^{-\half}(T_{i,n})^{j-1}\genap f_{i-1}.
  \end{equation}
  whose norm is
  \begin{equation}
    \label{eq:whosenorm}
\langle (T^\eps_{i,n})^j(-\gensy)^{-\half}\genap f_{i-1},(-\gensy)^{-1}(T^\eps_{i,n})^j(-\gensy)^{-\half}\genap f_{i-1}\rangle\,.
  \end{equation}
   Proving that this norm tends to zero as $\eps\to0$
  is far less trivial than for $j=0$ and it requires giving a more
  careful look at the action of the operators $\gena$ and
  $T^\eps_{i,n}$. For definiteness, consider the case $i=2$, so that
  $f_{i-1}=f_1= \phi$ belongs to the homogeneous Fock space $\Gamma
  L^2_1$, and for simplicity assume that $\phi$ has a single Fourier
  mode, i.e. $\phi=e_k$ for some fixed $k\in \mathbb Z^d_0$. That is, the Fourier transform $\hat \phi$ is
  zero except when the momentum equals $k$, in which case it equals $1$.
  In this case, $(-\gensy)^{\half}\phi=\frac{|k|}{\sqrt2}e_k$ and the scalar product in \eqref{eq:whosenorm} can be rewritten as
  \begin{equ}
    \label{eq:whosenorm2}
    \frac{|k|^2}2\langle (T^\eps_{i,n})^jT^{\eps}_+ e_k,(-\gensy)^{-1}(T^\eps_{i,n})^jT^{\eps}_+e_k\rangle\,
  \end{equ}
with the notations \eqref{eq:T}.
 
  To get a grip on \eqref{eq:whosenorm2}, it is useful to introduce an intuitive representation in terms of paths. 
  First of all, expand
  \[
(T^\eps_{i,n})^jT^{\eps}_+=(T^{\eps,+}_{i,n}+T^{\eps,-}_{i,n})^jT^{\eps}_+
    \]
    into a sum of $2^j$ terms. Each such term is a product of $j+1$
    operators $T^{\eps,\sigma}_{i,n},\sigma\in\{-1,+1\}$, whose
    structure can be encoded via a ``path''
    $p=(p_0,p_1,\dots,p_{j+1})$ with $j+1$ steps, where by convention
    $p_0=1$ and $p_r-p_{r-1}=\pm$. These increments correspond to the
    signs $\sigma$ of  operators appearing in the product, read from right to
    left.  For instance, for $j=3$ the product
    $T^{\eps,+}_{i,n}T^{\eps,-}_{i,n}T^{\eps,+}_{i,n}T^{\eps}_+$ would
    correspond to the path $p=(p_0,p_1,p_2,p_3,p_4)=(1,2,3,2,3)$
    because $p_1-p_0=p_2-p_1=+1,\,p_3-p_2=-1,\,p_4-p_3=+1$. 
    
    Since we are
    applying such products to $e_k$ that belongs to $\fock_1$, the
    effect of the projectors $P_{2,n}$ in the definition of
    $T^{\eps,\pm}_{2,n}$ is to select only paths that never reach
    height $1$ (except at its starting point) or height $n+1$. Note
    that, necessarily, $p_1=2$, because the rightmost operator in the
    product is $T^\eps_+$, and also $p_2=3$, otherwise one would have
    $p_2=1$. Call $\Pi^{(n)}_{j+1}$ the (finite) set of such paths of
    length $j+1$ that reach neither height $1$ (except at the starting
    point and possibly at the endpoint) nor $n+1$, and call
    $\Pi^{(n)}_{j+1,a}$ the subsets of those paths in
    $\Pi^{(n)}_{j+1}$ that end at height $a$, i.e. such that 
    $p_{j+1}=a$. Finally, for $p\in\Pi^{(n)}_{j+1}$, let
    \begin{equ}
      \label{eq:Tp}
      \mathcal T^\eps_p=  T^\eps_{p_{j+1}-p_j}\dots T^\eps_{p_2-p_1}\,T^\eps_+.
    \end{equ}
    Plugging this (finite) expansion into \eqref{eq:whosenorm2}, we can rewrite it as
    \begin{equ}
      \label{ilsommone}
  \frac{|k|^2}2\sum_{2\le a\le n}\sum_{p^{(1)},p^{(2)}\in \Pi_{j+1,a} }   \langle \mathcal T^\eps_{p^{(1)}}e_k,(-\gensy)^{-1}\mathcal T^\eps_{p^{(2)}}e_k\rangle\,.
    \end{equ}
    The reason why both paths belong to $\Pi^{(n)}_{j+1,a}$ with the same
    value of $a$ is that the scalar product would be zero otherwise,
    since $\mathcal T^\eps_{p}e_k\in \fock_a$.  Since the sum over
    paths is finite (for every given $j$), we are left with proving that
    each scalar product in the last expression individually tends to
    zero as $\eps\to0$.  
    
    For pedagogical reasons, let us look at a concrete example, with $j=1$ and $p^{(1)}=p^{(2)}=p:=(1,2,3)$, corresponding to $\mathcal T^\eps_p=T^\eps_+T^\eps_+$. That is, we look at the norm
    \begin{equation}
      \label{eq:esempio}
      \langle \mathcal T^\eps_{p}e_k,  (-\gensy)^{-1}\mathcal T^\eps_p e_k\rangle=
      \langle T^\eps_+T^\eps_+e_k,(-\gensy)^{-1}T^\eps_+T^\eps_+e_k\rangle.
    \end{equation}
    Recalling how $T^\eps_\pm$ are defined in terms of $\gena_\pm$ and how $\gena_\pm$ act on each Fock subspace, 
    the scalar product in \eqref{eq:esempio} can be written as a multiple Fourier sums. 
    Let us work this out step by step. First of all, recalling that $\phi=e_k$,
    \begin{equs}
      \cF(T^\eps_+\phi)(k_{1:2})&=-\sqrt{\frac2{|k_1|^2+|k_2|^2}}\frac{ \iota\lambda_\eps}{(2\pi)^{d/2}}\fw\cdot(k_1+k_2) \indN{k_1,k_2}\frac{\sqrt 2}{|k|}\hat\phi(k_1+k_2)\\
      &=C_1\lambda_\eps\frac{\fw\cdot k}{|k|\, |k_{1:2}|}\indN{k_1,k_2}\1_{k_1+k_2=k}, %\quad C_1=-\frac{2\iota }{(2\pi)^{d/2}}.
      \label{eq:T+ek}
    \end{equs}
with $C_1\eqdef 2\iota/(2\pi)^{d/2}$. 
In the first equality, the first square root comes from the leftmost  $(-\gensy)^{-\half}$ in the definition of $T^\eps_+$, 
while the ratio $\sqrt 2/|k|$ from the rightmost one, and in the second equality we used the fact that for our choice of 
$\phi$, $\hat\phi(m)=\1_{m=k}$. 
When we apply $T^\eps_+$ for a second time we get something more complicated, 
because the output belongs to $\fock_3$, so that there are three possible choices of $(i,j)$ in the formula \eqref{e:gen} for 
$\genap$. Namely, the reader can easily check that
\begin{equ}
  \label{IIIIII}
  \cF( T^\eps_+     T^\eps_+\phi)(k_{1:3})= C_2\lambda_\eps^2\frac{\fw\cdot k}{|k|}\frac{\1_{k_1+k_2+k_3=k}}{|k_{1:3}|}\left[({\rm I})+({\rm II})+({\rm III})\right],
  \end{equ}
where $C_2\eqdef\tfrac23(C_1)^2$ and 
\begin{equ}
 ({\rm I})=\frac{\fw\cdot(k_1+k_2)}{|k_1+k_2|^2+|k_3|^2}\indN{k_1,k_2}\indN{k_1+k_2,k_3},
\end{equ}
while the term denoted (II) (resp. (III)) is defined like (I),
except that $(k_1,k_2,k_3)$ is replaced by $(k_1,k_3,k_2)$
(resp. by $(k_2,k_3,k_1)$). 
Note in particular the indicator function that imposes that the total sum of the momenta is $k$. 
This is a general fact, that holds for every choice of path $p$, 
and is a consequence of translation invariance (in law) of the Burgers equation (see~\eqref{e:MomComm} 
for how it translates to the generator). 
The occurrence of the prefactor $(\fw\cdot k)/|k|$ is also general and is due to the action of the rightmost 
$(-\gensy)^{-\half}$ (that produces $1/|k|$ when acting on $e_k$) and of the term $\fw\cdot(k_1+k_2)$ 
from the rightmost $\genap$, since at the first application one has $k_1+k_2=k$ by conservation of momentum.
See also the left drawing in Fig. \ref{fig:Feyn1}.
  \begin{figure}[h]
 \begin{center} \includegraphics[width=10cm]{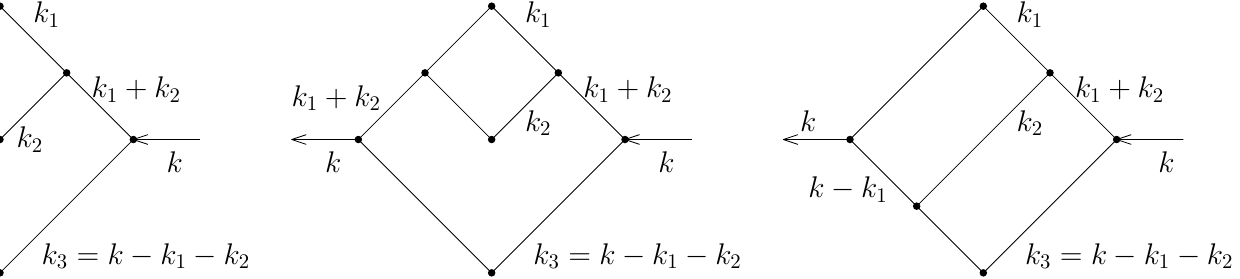}
   \caption{{\it Left drawing:} a schematic representation of term (I) in
     $(T^\eps_+ T^\eps_+e_k)(k_{1:3})$. The drawing should be read
     from right to left: each branching corresponds to the application
     of $\genap$ (or equivalently  $T^\eps_+$), the labels next to the lines denote the Fourier
     variable (momentum) and at each branching, the sum of outgoing
     momenta equals the incoming momentum. {\it Center and right drawings:}
     a schematic representation of the direct term (I)$^2$ and of the
     cross term (I)$\times$(II). } \label{fig:Feyn1}
\end{center}
\end{figure}

At last we can compute the norm in \eqref{eq:esempio}. When squaring, we obtain both ``direct terms'' like (I)$^2$, or ``cross terms'' like (I)$\times$(II), see again Fig. \ref{fig:Feyn1} for a schematic illustration. By symmetry, all direct terms give the same result, as do all cross terms. Altogether,
the contribution of the direct terms is
\begin{equation}
  \label{directterms}
3C_2^2\lambda_\eps^4\left(\frac{\fw\cdot k}{|k|}\right)^2\sum_{k_{1:3}}\1_{k_1+k_2+k_3=k}\frac{(\fw\cdot(k_1+k_2))^2 \indN{k_1,k_2}\indN{k_1+k_2,k_3}}{|k_{1:3}|^4\, (|k_1+k_2|^2+|k_3|^2)^2}\,.
\end{equation}
The contribution from the cross terms is similar, and actually for the present purposes 
we can simply bound it above by \eqref{directterms}, using Cauchy-Schwarz.

Let us make a few comments about \eqref{directterms}. The sum on $k_{1:3}$ runs over $(\mathbb Z^d\setminus \{0\})^3$, 
but because of the indicator function, it is actually only on $k_1,k_2$. 
Secondly, the terms $\indN{\cdot,\cdot}$ restrict summed momenta to be at most of order $\eps^{-1}$. 
Finally, observe that $\lambda_\eps^4=\eps^{2d-4}$. 
Therefore, we can rewrite the sum (including the prefactor $\lambda_\eps^4$) in the more evocative form
\begin{equs}\label{eq:evocative}
  \eps^2&\times \eps^{2d}\sum_{k_1,k_2}\J^1_{\eps k_1,\eps k_2}\J^1_{\eps (k_1+ k_2),\eps (k-k_1-k_2)}\times\\
  &\times\frac{(\fw\cdot(\eps k_1+\eps k_2))^2}{(|\eps k_1|^2+|\eps k_2|^2+|\eps (k-k_1-k_2)|^2)^2}
   \frac1{(|\eps(k_1+k_2)|^2+|\eps(k-k_1-k_2)|^2)^2}\,.
\end{equs}
As $\eps$ tends to zero, the sum (including the factor $\eps^{2d}$) is a Riemann sum approximation and the whole  expression approximately equals
\begin{eqnarray}
  \label{eq:Riemann}
\eps^2\times \int_{\mathbb R^{2d}} \dd q \dd p \1_{\eps\le |p|,|q|,|p+q|\le1}\frac{(\fw\cdot(p+q))^2}{(|q|^2+| p|^2+|p+q|^2)^2}
  \times \frac1{4|p+q|^4},
\end{eqnarray}
where the indicator function comes from the fact that, for instance, 
$\J^1_{\eps k_1,\eps k_2}=\indN{k_1,k_2}$ imposes $0\le |k_1|,|k_2|,|k_1+k_2|\le \eps^{-1}$.
To analyse this integral, it is convenient to make a dyadic scale decomposition into regions where 
$2^{-a-1}\le |p|\le 2^{-a},2^{-b-1}\le |p+q|\le 2^{-b}$ with $0\le a,b\lesssim |\log_2\eps|$. 
In each such region $D_{a,b}$ the integrand is approximately constant, and the integral restricted to $D_{a,b}$ is of order
\[
\frac{2^{-d(a+b)}}{2^{-2b}2^{-4\min(a,b)}}\,,
\]
where the numerator estimates the size of $D_{a,b}$ and the denominator estimates the integral (we used the obvious bound $|\fw\cdot(p+q)|\lesssim |p+q|$).
The sum  over $a,b\lesssim |\log_2\eps|$ is bounded uniformly in $\eps$ for $d>3$ and it is of order $|\log\eps|$ for $d=3$. In either case, the expression \eqref{eq:Riemann} tends to zero as $\eps\to0$.
We emphasize that the presence of the inverse $(-\gensy)^{-1}$ in \eqref{eq:esempio} is crucial: otherwise, the exponent $4$ in \eqref{directterms} would be $2$ and the prefactor $\eps^2$ in 
\eqref{eq:evocative} would not be there.
\medskip

The argument just given proves that one particular term in the sum
\eqref{ilsommone}, corresponding to a specific $j$ and a specific
choice of $p^{(1)},p^{(2)}$, converges to zero as desired. In
principle, to prove the convergence \eqref{e:L2}, one might try to
treat the generic term in the same way and prove that each
individually converges to zero. As the reader can easily imagine,
however, expressions like \eqref{IIIIII} and the corresponding
integrals become unmanageable as soon as $j$ grows. This problem is
solved in a systematic way in \cite[Lemma 2.19]{CGT}, where the
generalisation of formula \eqref{IIIIII} %for $\mathcal T^\eps_p e_k$,
to the case of a generic path $p\in \Pi^{(n)}_{j+1,a}$ is derived via an inductive
procedure, the induction being with respect to the length of the path $p$.
Without writing here the full result, let us point out its main features.
If $p\in \Pi^{(n)}_{j+1,a}$, then $\mathcal T^\eps_p e_k\in \fock_a$. The expression for $\cF(\mathcal T^\eps_p e_k)(k_{1:a})$ has several points in common with \eqref{IIIIII}:
\begin{enumerate}
\item it is proportional to $\lambda_\eps^{j+1}$ (because the product
  $\mathcal T^\eps_p$ contains $j+1$ operators, each bringing a factor
  $\lambda_\eps$), 
\item it is proportional to $(\fw\cdot k)/|k|$ (coming from the rightmost $T^\eps_+$), to $1/|k_{1:a}|$ (coming from the leftmost $(-\gensy)^{-\half}$) and to $\1_{k_1+\dots+k_a=k}$ (from translation invariance), 
  
\item it contains the sum over $r$ ``momenta'' in $\mathbb Z^d_0$, where $r$ is the number of operators $T^\eps_-$ that appear in $\mathcal T^\eps_p$ (in the example \eqref{IIIIII}, $
  r=0$). The sums come from that over $\ell,m$ that appears in the  definition \eqref{e:gen} of $\genam$.

\item just as \eqref{IIIIII} involves the sum of the three terms (I),
  (II), (III) coming from the three possible choices of $1\le i<j<3$
  from the application of $\genap$ to a function in $\fock_2$, the
  general formula involves a large but finite sum of terms,
  each coming from a choice of $1\le i<j\le n+1$ or $1\le j\le n-1$
  from the repeated applications of $\gena_\pm$, see
  \eqref{e:gena}. Each term is  a product of indicator functions $\indN{\cdot,\cdot}$, 
  multiplied by a ratio of two homogeneous polynomials of $k_{1:a}$. 
  The polynomials in the numerator originates from the $\fw\cdot(\ell+m)$ or $\fw\cdot(k_i+k_j)$ in \eqref{e:gen}, 
  while those in the denominator come from the $(-\gensy)^{-\half}$ involved in the definition of $T^\eps_\pm$.
  
\end{enumerate}

A simple power counting based on the degree of homogeneity of the
polynomials mentioned in item 4 above then shows that the scalar
product in \eqref{ilsommone} is in the form a product between $\eps^2$ and the Riemann
sum approximation of an integral of $j+1$ variables in $\mathbb R^d$,
where the integrand is a linear combination of ratios of homogeneous
polynomials whose variables are constrained to be of norm at most
$1$. It would be extremely laborious to analyse by hand the convergence
of such integrals, as we did for the simple case of \eqref{eq:Riemann}
above. The way out devised in \cite{CGT} is based on the following observations. 
First, as noted above, there is an extra $(-\gensy)^{-1}$, in the sense that even without it 
the scalar product in~\eqref{eq:whosenorm2} would be bounded. 
Second, the main contribution to the norm of 
$T^\eps_\pm$ comes from momenta whose size diverges with $\eps$ (see the informal 
discussion after Lemma~\ref{lem:essid} or the precise formulation of~\cite[Lemma 2.15]{CGT}), i.e. 
the Fourier modes $\ell,m$ produced by $\genap$ (or those annihilated by $\genam$) need 
to be such that $|\ell|,|m|\gtrsim \eps^{-\gamma}$ for some $\gamma\in(0,1)$. 
As a consequence, we can (formally) bound the extra $(-\gensy)^{-1}\lesssim \eps^{2\gamma}$, which vanishes, 
and be left with something which is $O(1)$ (see~\cite[Lemma 2.16]{CGT} for the details).  
%
% is to obtain
%improved a priori estimates, of ``uniform summability'' type, on the
%operators $T^\eps_\pm$, (or rather on a modified version of them,
%called $T^{\eps,\delta}_\pm$ there). We refer the interested reader to \cite[Sec. 2.4]{CGT} for details.\giuseppeText{In realtà noi non usiamo la sommabilita qua. Usiamo il fatto che solo i modi grandi contano e che , se ce ne e' uno grande 
%allora $(-\gensy)^{-1}$ è grande... }
\medskip
    
Let us now turn to (the heuristics of) the proof of \eqref{e:Diffusivity}. 
Again, we consider $i=2$ and the special case $f_1=\phi=e_k$ 
(the function $\phi$ is the single Fourier mode $k$). 
At the end of this section, we will comment on the case $i=3$, 
which \emph{is not} a simple extension of $i=2$ and requires some new  ideas.
Again we choose $\wN$ as in \eqref{e:vne} and our goal is to prove that, in the double limit as $\eps\to0$ first and $n\to\infty$ afterwards,
\begin{eqnarray}
  \label{eq:approx}
  (-\gensy)^{-\half}\genam \wN_2\approx (-\gensy)^{-\half} \mathcal D f_1=-D_\SHE \frac{(\fw\cdot k)^2}{\sqrt 2|k|}e_k
\end{eqnarray}
for a suitable constant $D_\SHE>0$. Let us start by rewriting the l.h.s. of \eqref{eq:approx} as
\begin{equs}
 (-\gensy)^{-\half}\genam \wN_2&=  (-\gensy)^{-\half}P_1\genam (-\gensy-\gena_{2,n})^{-1}\genap \phi\\
 &=(-\gensy)^{-\half}P_1\genam (-\gensy)^{-\half}(I-T^\eps_{2,n})^{-1}(-\gensy)^{-\half}\genap \phi\\
 &=\frac{|k|}{\sqrt{2}}P_1 T^\eps_- (I-T^\eps_{2,n})^{-1}T^\eps_+ \phi,
\end{equs}
with $P_1$ the projection on $\fock_1$. Again, let us proceed heuristically and expand the inverse in power series:
\begin{equs}[eq:inthe]
 (-\gensy)^{-\half}\genam \wN_2 &=\frac{|k|}{\sqrt{2}}P_1 T^\eps_- \sum_{j=0}^\infty (T^\eps_{2,n})^{j}T^\eps_+ \phi\\
&=\frac{|k|}{\sqrt{2}}\sum_{j=0}^\infty P_1T^\eps_- (T^{\eps,+}_{2,n}+T^{\eps,-}_{2,n})^{j}T^\eps_+ \phi\\
&=\frac{|k|}{\sqrt{2}}\sum_{j=0}^\infty\sum_{p\in \Pi^{(n)}_{j+2,1}}\mathcal T^\eps_p \phi.
\end{equs}
where, in the last step, we expanded the power $(T^{\eps,+}_{2,n}+T^{\eps,-}_{2,n})^{j}$ and 
used the fact that, due to the presence of the projector $P_1$, each resulting paths $p$ belongs to $\Pi^{(n)}_{j+2,1}$, 
i.e. $p$ has to end at height $1$. 
Note that the first (resp. last) step of the path is necessarily upward (resp. downward), 
due to the rightmost $T^\eps_+$ (resp. to the leftmost $T^\eps_-$).

What we would like to see is that for each path $p\in \Pi^{(n)}_{j+2,1}$,
\begin{equation}
  \label{eq:morally}
\frac{|k|}{\sqrt 2}\mathcal T^\eps_p \phi\quad \stackrel {\eps\to0}\longrightarrow\quad -D(p)\frac{(\fw\cdot k)^2}{\sqrt2|k|}e_k
  \end{equation}
  for some $D(p)$, so that (morally) the constant $D_\SHE$ in
  \eqref{eq:approx} would be given by 
  \begin{equ}
  D_\SHE\eqdef \lim_{n\to\infty} \sum_j\sum_{p\in \Pi^{(n)}_{j+2,1}}D(p)\,.
  \end{equ}

For concreteness, let us look at
a concrete example, namely take $p=(1,2,1)$, that belongs to
  $\Pi^{(n)}_{j+2,1}$ with $j=0$, corresponding to
  $\mathcal T^\eps_p=T^\eps_- T^\eps_+$. (It would probably be more
  convincing for the reader if we worked out an example with $j>0$,
  but $\Pi^{(n)}_{j+2,1}$ is empty unless $j$ is even and for
  $j=2$, $\mathcal T^\eps_p$ involves the product of four operators
  $T^\eps_\eps$, leading to a cumbersome and unreadable
  expression for $\mathcal T^\eps_p e_k$.)
Let us then write down the explicit expression for $(T^\eps_- T^\eps_+ e_k)(k')$. To do this, we start from \eqref{eq:T+ek} and we apply $T^\eps_-$, with the result
  \begin{equs}
    \frac{|k|}{\sqrt2}  \cF(T^\eps_- T^\eps_+ e_k)(k')&=-\frac{|k|}{\sqrt2}
C_3\lambda_\eps^2\frac{(\fw\cdot k)}{|k|} \frac{(\fw\cdot k')}{|k'|}\sum_{\ell+m=k'}\frac{\indN{\ell,m}}{|\ell|^2+|m|^2}\1_{\ell+m=k}\\
&=-C_3\lambda_\eps^2\frac{(\fw\cdot k)^2}{\sqrt 2|k|}\1_{k=k'}\sum_{\ell+m=k}\frac{\indN{\ell,m}}{|\ell|^2+|m|^2}\label{eq:C3}
\end{equs}
with $C_3\eqdef -4(C_1)^2>0$ and $C_1$ as in~\eqref{eq:T+ek}. 
  Next, note that
  \[
\lambda_\eps^2 \sum_{\ell+m=k}\frac{\indN{\ell,m}}{|\ell|^2+|m|^2}=\eps^d\sum_{\ell+m=k}\frac{\indN{\ell,m}}{\eps^2|\ell|^2+\eps_2|m|^2}=\eps^d\sum_{x\in \eps \mathbb Z^d}\frac{\1_{0<|x|,|x-\eps k|\le 1}}{|x|^2+|\eps k-x|^2}:
\]
as $\eps\to0$, the latter sum converges to the integral
\begin{equ}
  \label{eq:I}
I\eqdef   \int_{\mathbb R^d}\frac{\1_{|x|\le 1}}{2|x|^2}\dd x>0
\end{equ}
which is both finite (because $d\ge 3$) and independent of $k$.  Since the function
$k'\mapsto \1_{k'=k}$ is exactly the kernel of $\phi=e_k$, we
have proven that (for this particular choice of path $p$)
\eqref{eq:morally} holds, with $D(p)=I\times C_3>0$.  Obviously, for
generic path $p$ there is no simple way of writing down explicitly the
kernel of $\mathcal T^\eps_p e_k$. However, using the recursive
expression mentioned above (see~\cite[Lemma 2.19]{CGT}), it is not
difficult to convince oneself that indeed each of these kernels is
proportional to $(\fw\cdot k)^2\1_{k'=k}$ times a Riemann sum,
whose summand is a finite linear combination of ratios of
polynomials. Next, an a priori estimate of ``uniform summability'' type, on the operators
on the operators $T^\eps_\pm$, (or rather on a modified version of them,
called $T^{\eps,\delta}_\pm$ there, see~\cite[Lemma 2.14]{CGT}) allow to prove that indeed the
sums converge to the corresponding integrals, and that these are
finite. The integrals are independent of $k$ since, before the limit
$\eps$ is taken, $k$ appears in the sums only in the combination
$\eps\times k$. We refer to \cite[Lemma 2.20]{CGT} for all details.
\medskip

Finally, we want to explain why formula \eqref{e:Diffusivity} holds also for $i=3$, i.e. when the function $f_{i-1}=f_2$ belongs to the second Wiener chaos $\fock_2$. Recall that for the proof of Theorem \ref{thm:FD} we want to take $f_2$ of the form $f_2=[\phi\otimes \psi]_{sym}=[\phi\otimes \psi+\psi\otimes \phi]/2$ for some smooth test functions $\phi, \psi$. For simplicity, consider here the case $\phi=\psi=e_{k}$, so that Fourier transform of $f_2$ (as function of two momenta $k_1,k_2$) 
is just $\1_{k_{1:2}=(k,k)}$. 

What we need to prove is that, in the limit where $\eps\to0$ first and then $n\to\infty$,
\begin{equation}
  \label{eq:multi}
 (-\gensy)^{-\half}\genam \wN_3\approx (-\gensy)^{-\half}\mathcal D f_2=-\frac{1}{|k|}D_\SHE(\fw\cdot k)^2 e_k\otimes e_{k},
\end{equation}
where $\approx$ means that the difference of the two expressions should have small $\fock$-norm as $\eps\to0$.
We emphasise that there are two non-trivial points here. First, $D_\SHE$ should not just be ``a constant'', 
but  \emph{the same} constant as in \eqref{eq:approx}. Secondly, by translation invariance, 
it is clear that the right hand side of \eqref{eq:multi} should be a linear combination of $[e_m\otimes e_\ell]_{sym}$, 
with $\ell+m=2k$. What \eqref{eq:multi} is saying is much stronger: not only the total momentum, 
but (at least in the limit $\eps\to0$) the two individual momenta are conserved! 

To see why this happens, let us proceed exactly as in the case $i=2$ but, this time, take, according to \eqref{e:vne}, 
\[
\wN=(-\gensy-\gena_{3,n})^{-1}\genap f_2\,.
\]
Now, a formal power series expansion of $(-\gensy-\gena_{3,n})^{-1}$ leads to (the reader is invited to check!)
\begin{equs}
  (-\gensy)^{-\half}\genam \wN_3(k_{1:2}) &=|k|P_2 T^\eps_- \sum_{j=0}^\infty (T^\eps_{2,n})^{j}T^\eps_+ [e_k\otimes e_{k}](k_{1:2})\\
  &=|k|\sum_{j=0}^\infty\sum_{p\in \Pi^{(n)}_{j+2,1}}\mathcal T^\eps_p[e_k\otimes e_{k}](k_{1:2})\,.
\end{equs}
Note that, like in \eqref{eq:inthe}, the paths $p$ end at height $1$:
however, since $[e_k\otimes e_{k}]\in\fock_2$, the right-hand
side is correctly an element of $\fock_2$. Let us choose the same $p=(1,2,1)$ 
and $\mathcal T^\eps_p=T^\eps_- T^\eps_+$ as above, and let us see whether, indeed,
\begin{equation}
  \label{eq:indeed}
|k|\,\mathcal T^\eps_p[e_k\otimes e_{k}]\approx -\frac{1}{|k|}D(p)(\fw\cdot k)^2 [e_k\otimes e_{k}],
\end{equation}
with the same $D(p)=I\times C_3$. This is a bit laborious but very
instructive. First of all, using the definition of $\genap$, note that
\begin{equ}[eq:I']
  |k|\,\cF(T^\eps_+e_k\otimes e_k)(k_{1:3})=C_1\frac{\lambda_\eps}3 \frac{\sqrt 2}{ |k_{1:3}|}(\fw\cdot k)[{\rm (I')}+{\rm (II')}+{\rm (III')}]
\end{equ}
where ${\rm (I')}\eqdef\indN{k_1,k_2}\1_{k_1+k_2=k_3=k}$, while (II') (resp. (III')) is defined like (I'),
except that $(k_1,k_2,k_3)$ is replaced by $(k_1,k_3,k_2)$
(resp. $(k_2,k_3,k_1)$), and the constant $C_1$ is the same as in \eqref{eq:T+ek}. 
See Fig. \ref{fig:Feyn2} for a schematic representation of (I')-(III').
  \begin{figure}[h]
 \begin{center} \includegraphics[width=9cm]{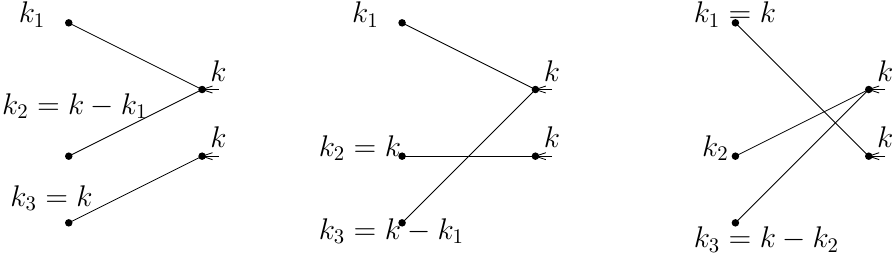}
  \caption{A schematic representation of the terms (I'), (II'), (III'). When $T^\eps_+$ acts on $e_k\otimes e_k$, the first momentum $k$ is split into two momenta $k_i,k_j$ with three possible choices for $1\le i<j\le 3$. By conservation of momentum, $k_i+k_j=k$, and the third outgoing momentum also equals $k$.} \label{fig:Feyn2}
\end{center}
\end{figure}

Next, we apply $T^\eps_-$. Because it acts on a function in $\fock_3$, in the definition \eqref{e:gen} of $\genam$, 
we have two possible choices for $j$, i.e. $j=1$ or $2$. Therefore 
$|k|\,\cF(T^\eps_-T^\eps_+[e_k\otimes e_k])(k_{1:2})$ is the sum of six terms: two of them are referred to as  ``the diagonal  diagrams''  and the other four as the ``off-diagonal diagrams''. The term ``diagram'' is chosen because of the analogy with Feynman diagrams arising in perturbative analysis of field theories. See Fig. \ref{fig:Feyn3} for an explanation of this nomenclature.
  \begin{figure}[h]
 \begin{center} \includegraphics[width=12cm]{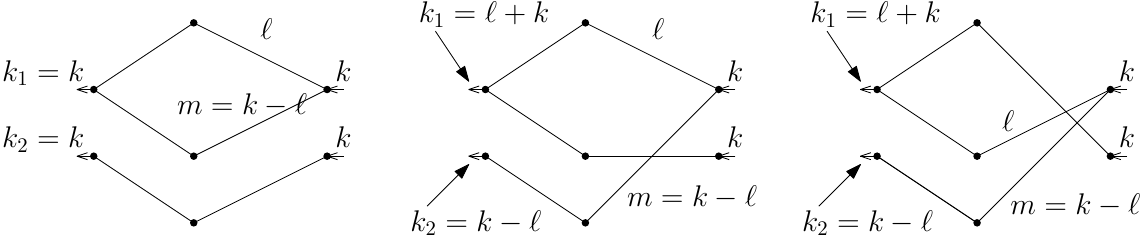}
   \caption{A schematic, graphical, representation of the three contributions  that
     are obtained by applying $T^\eps_-$ (with the choice $j=1$ in the
     definition \eqref{e:gen} of $\genam$) to the three terms (I'),
     (II'), (III') defining $T^\eps_+[e_k\otimes e_k]$. The left-most
     drawing corresponds to a diagonal diagram (a similar one
     is obtained with the choice $j=2$) while the second and third are
      two of the four off-diagonal
      diagrams (again, the remaining two are obtained with the choice $j=2$). As the drawing indicates, in diagonal diagrams the action of $T^\eps_-$ merges exactly the two momenta that have been generated by the application of $T^\eps_+$, which splits one incoming momentum $k$. In off-diagonal diagrams, instead, one of the two merged momenta has not been modified by the action of $T^\eps_+$.
      Note that because of momentum conservation, in
     the two off-diagonal diagrams, $\ell$ is forced to be 
     equal to $k_1-k=k-k_2$ (recall that $k_1+k_2=2k$ also because of momentum conservation), while in the diagonal one $\ell$ free and
     will be summed over.} \label{fig:Feyn3}
\end{center}
\end{figure}

Taking carefully into account the prefactors in the definitions of $\gena_\pm$, 
the reader can check that the sum of the two diagonal diagrams in $|k|\,T^\eps_-T^\eps_+[e_k\otimes e_k](k_{1:2})$ 
equals
\begin{equs}
  2&\times (C_1)^2  \frac{(\fw\cdot k)(\fw\cdot k_1)}{|k_{1:2}|}2\sqrt{2}\lambda^2_\eps \sum_{\ell+m=k_1}\frac{\indN{\ell,m}} {|\ell|^2+|m|^2+|k|^2}\1_{\ell+m=k_2=k}\\
  &=-C_3 \frac{(\fw\cdot k)^2}{|k|} \1_{k_1=k_2=k}\lambda_\eps^2\sum_{\ell+m=k}\indN{\ell,m}\frac 1{|\ell|^2+|m|^2+|k|^2}\,,
\end{equs}
with $C_3$ the same constant as in \eqref{eq:C3}, and the factor $2$ at the beginning of the first line 
comes from the fact that there are $2$ diagonal diagrams.  Finally, note that
$ \1_{k_1=k_2=k}=\cF([e_k\otimes e_k])(k_{1:2})$ and that the sum,
including the prefactor $\lambda_\eps^2$, converges as $\eps\to0$ to
the same finite integral $I$ as in \eqref{eq:I}. 
In other words, the diagonal diagrams in 
$|k|\mathcal T^\eps_p[e_k\otimes e_{k}](k_{1:2})$ are non-zero only if $k_{1:2}=(k,k)$, 
in which case they tend exactly to the
r.h.s. of \eqref{eq:indeed} computed at $k_{1:2}=(k,k)$.

Hopefully, the contribution of the off-diagonal diagrams to $|k|\mathcal T^\eps_p[e_k\otimes e_{k}](k_{1:2})$
is negligible in the limit. Let us compute them. Applying again the
definition of $T^\eps_-$ one sees that, for instance, the diagram in the
middle of Fig. \ref{fig:Feyn3} is proportional to
\begin{equs}
  \lambda_\eps^2 \frac{(\fw\cdot k) (\fw\cdot k_1)}{|k_{1:2}|}\sum_{\ell+m=k_1}\frac{\indN{\ell,m}}{|\ell|^2+|m|^2+|k_2|^2}\1_{\ell+k_2=m=k}.
\end{equs}
Note that, because of the indicator functions, the whole expression is zero unless $k_1+k_2=2k$ and the only possible value of $\ell$ is $k_1-k=k-k_2$. Therefore,  the previous sum is upper bounded by 
\begin{equs}
  \lambda_\eps^2 \frac{(\fw\cdot k) (\fw\cdot k_1)}{\sqrt{|k_1|^2+|2k-k_1|^2}}&\frac{\1_{|k_1|\le \eps^{-1}}}{|k-k_1|^2+|k|^2+|2k-k_1|^2}\1_{k_1+k_2=2k}\\
  &\lesssim \eps^{d-2}|k|\frac{\1_{0<|k_1|\le \eps^{-1}}}{|k_1|^2}\1_{k_1+k_2=2k}\,.
\end{equs}
The last expression, as a function of $k_{1:2}$, has an $L^2$ norm that vanishes as $\eps\to0$, 
for every dimension $d\ge3$. More precisely, the squared norm vanishes as $\eps^d$ for $d\ge5$, 
as $\eps^4 |\log \eps|$ for $d=4$ and as $\eps^2$ for $d=3$, which implies that \eqref{eq:indeed} holds. 
\medskip

We emphasise that (see Fig. \ref{fig:Feyn3}) what distinguishes
graphically a diagonal from an off-diagonal diagram 
are the following two properties, namely the  graph associated to a diagonal diagram (i) is 
disconnected and (ii) it has a line flowing from one of the two
incoming to one of the two outgoing momenta, without undergoing any
branching or merging.  A systematic analysis of 
$\mathcal T^\eps_p[e_k\otimes e_k]$, and more generally of $\mathcal T^\eps_p f_2$, for general path $p\in \Pi^{(n)}_{j+2,1}$ and $f_2\in\fock_2$, shows that
the only diagrams that contribute as $\eps\to0$ are those that satisfy properties
(i)-(ii) (see~\cite[Lemma 2.17]{CGT}). See also Fig. \ref{fig:Feyn4} for a pictorial example.  
 \begin{figure}[h]
 \begin{center} \includegraphics[width=11cm]{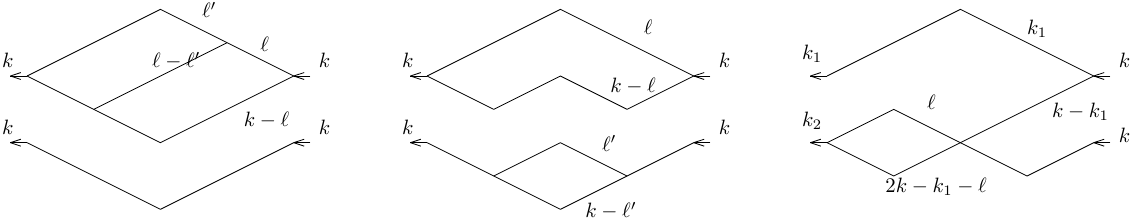}
  \caption{A schematic representation of three diagrams arising in the expression of  $\mathcal T_p^\eps[e_k\otimes e_k]$ for paths belonging to $\Pi^{(n)}_{j+2,1},j=2$. The left-most one satisfies properties (i)-(ii) and it gives a non-vanishing contribution as $\eps\to0$. The two others do not, either because they are not disconnected, or because both connected components involve branching and merging.} \label{fig:Feyn4}
\end{center}
\end{figure}

  \subsubsection{Avoiding the power series expansion}
\label{sec:avoiding}
  
The argument sketched in the previous section has an obvious weak point: 
the power series expansion~\eqref{eq:powerseries} is not justified. 
In fact, what \eqref{e:Tbound} says is that the norm of $T^\eps_\sigma$, when acting on $\fock_j$, 
is bounded by a constant $C$ times $\sqrt j$. 
Following the proof of \eqref{e:Tbound} one easily sees that the constant $C$ can be (artificially) made small 
choosing  the vector $\fw$ that defines the nonlinearity to have a small norm. 
However, the factor $\sqrt j$ grows with the chaos label, so that the operator $T^\eps_{2,n}$ 
cannot be expected to be bounded in norm by anything better than $\sqrt n$. 
Hence, the power series expansion of $(I-T^\eps_{2,n})^{-1}$ around $T^\eps_{2,n}=0$ has no chance of converging. 

Let us present the way to avoid it that we learned from~\cite{GrPer}. We will specialise  
to the case of~\eqref{e:L2}, since the argument for~\eqref{e:Diffusivity} works similarly 
(see the proof of~\cite[Proposition 2.12]{CGT} for the details).
The key point  is to note that, while $T^\eps_{2,n}$ is not small, it is antisymmetric, 
so that, in particular, its spectrum is purely imaginary. 
As a consequence, the inverse $(I-T^\eps_{2,n})^{-1}$ is well-defined and can be expressed as
  \begin{equ}
    \label{eq:Tintegral}
    (I-T^\eps_{2,n})^{-1}=\int_0^\infty e^{-s} e^{s T^\eps_{2,n}}\dd s
  \end{equ}
  and the integral converges since the exponential is bounded by $1$ thanks to the mentioned 
  purely imaginary nature of the spectrum of $T^\eps_{2,n}$. Therefore, the $L^2$ norm of $\wN$ is given as
  \begin{equs}
    \label{eq:normw}
    \|\wN\|^2&=-\frac{|k|^2}2\langle e_k,T^\eps_-(I+T^\eps_{2,n})^{-1} (-\gensy)^{-1}(I-T^\eps_{2,n})^{-1}T^\eps_+ \,e_k\rangle\\
    &=-\frac{|k|^2}2
    \langle e_k,T^\eps_-\int_0^\infty e^{-s} e^{-s T^\eps_{2,n}}\dd s (-\gensy)^{-1}\int_0^\infty e^{-r} e^{+r T^\eps_{2,n}}\dd r T^\eps_+ e_k\rangle.
  \end{equs}
  Since the exponentials are bounded by $1$, we can apply Fubini's theorem to exchange integration with the scalar product and, dominated convergence, to exchange it with the limit $\eps\to0$, thus obtaining
  \begin{equ}
    \label{eq:normw1}
    \lim_{\eps\to0}\|\wN\|^2=-\frac{|k|^2}2 \int_0^\infty \dd s \dd r e^{-r-s} \lim_{\eps\to0}\langle e_k,T^\eps_- e^{-s T^\eps_{2,n}}(-\gensy)^{-1} e^{+r T^\eps_{2,n}} T^\eps_+ e_k\rangle.
  \end{equ}
  At this point, since $T^\eps_{2,n}$ is bounded (for fixed $n$,
  thanks to Lemma \ref{l:GenaBound}), we can expand the exponentials in
  power series and exchange limit and summation
  \begin{equs}
    \lim_{\eps\to0}\|\wN\|^2=-\frac{|k|^2}2&\int_0^\infty \dd s \dd r \; e^{-r-s}\times\\
    \times&\sum_{a\ge0,b\ge0}\frac{(-s)^a}{a!}\frac{r^b}{b!}\lim_{\eps\to0}\langle e_k,T^\eps_-(T^\eps_{2,n})^a(-\gensy)^{-1}(T^\eps_{2,n})^b T^\eps_+ e_k\rangle
  \end{equs}
  (this is allowed for fixed values of $r$ and $s$; note, however,  that we \emph{cannot exchange the order of summation and integration at this point!}).
Now we are in a situation similar to that where we got after the non-rigorous step \eqref{eq:powerseries}, 
except that this time all expansions and exchanges of limits are justified. Hence, we reduced 
the problem to the analysis of a scalar product which has the form of that in~\eqref{eq:whosenorm2} 
and we can proceed as we did in the previous section. 
%  At this point, we can expand the powers $(T^\eps_{2,n})^a, (T^\eps_{2,n})^b$ as finite sums over paths and repeat for each term the analysis sketched in the previous section. \fabioText{aggiungere una frase qui che dice che non facciamo veramente una case by case analysis but rather we rely on a kind of uniform summability for the operators, something like the sentence about the modified operators that is above and that will be removed}
%% \section{Characterisation of the limit: the limiting diffusivity}

%% \begin{enumerate}
%% \item state Theorem 2.13/2.14. They correspond to what happens to the linear (or quadratic) terms
%% \item the case $d=2$. This is very simple thanks to the replacement lemma and the expression we have for our Ansatz
%% \item the case $d\geq 3$. 
%% \begin{enumerate}
%% \item given the weighted (in the choas) apriori estimates, reduce the problem to $\|(-\gensy)^{\half}\vN\|$ 
%% \item explain the heuristic behind the convergence of the latter. How much detail do we want? 
%% \end{enumerate}
%% \end{enumerate}

\section*{Acknowledgments} 
We are very grateful to Fondazione CIME (Centro Internazionale
Matematico Estivo) and to Francesco Caravenna, Rongfeng Sun and Nikos
Zygouras for organizing the Summer school ``Statistical Mechanics and
Stochastic PDEs''. We would also like to thank Dirk Erhard,
Massimiliano Gubinelli and Levi Haunschmid-Sibitz for their
contributions to the articles these lecture notes are based on, and Quentin Moulard for enlightening discussions, in particular about the heuristics of Section \ref{sec:FDT}.

G.~C. gratefully acknowledges financial support
via the UKRI FL fellowship ``Large-scale universal behaviour of Random
Interfaces and Stochastic Operators'' MR/W008246/1.  F.~T.  was
supported by the Austrian Science Fund (FWF): P35428-N.

\bibliography{bibtex}
\bibliographystyle{Martin}

\end{document}